\def\a{\alpha}
\def\ci{\circ}
\def\d{\delta}
\def\di{\diamond}
\def\D{\Delta}
\def\g{\gamma}
\def\l{\lambda}
\def\lr{\longrightarrow}
\def\o{\otimes}
\def\rc{\prec}
\def\lc{\succ}
\def\r{\rho}
\def\s{\sigma}
\def\tl{\triangleright}
\def\tr{\triangleleft}
\def\v{\varepsilon}
\def\1{^{-1}}
\def\2{^{-2}}
\def\3{^{-3}}
\renewcommand\baselinestretch{1}    
\begin{document}
\renewcommand{\baselinestretch}{1.2}
 \renewcommand{\arraystretch}{1.0}

\newcommand {\emptycomment}[1]{} 

\newcommand{\nc}{\newcommand}
\newcommand{\delete}[1]{}
\nc{\mfootnote}[1]{\footnote{#1}} 
\nc{\todo}[1]{\tred{To do:} #1}

\nc{\mlabel}[1]{\label{#1}}  
\nc{\mcite}[1]{\cite{#1}}  
\nc{\mref}[1]{\ref{#1}}  
\nc{\mbibitem}[1]{\bibitem{#1}} 

\delete{
\nc{\mlabel}[1]{\label{#1}  
{\hfill \hspace{1cm}{\bf{{\ }\hfill(#1)}}}}
\nc{\mcite}[1]{\cite{#1}{{\bf{{\ }(#1)}}}}  
\nc{\mref}[1]{\ref{#1}{{\bf{{\ }(#1)}}}}  
\nc{\mbibitem}[1]{\bibitem[\bf #1]{#1}} 
}

\newtheorem{thm}{Theorem}[section]
\newtheorem{lem}[thm]{Lemma}
\newtheorem{cor}[thm]{Corollary}
\newtheorem{pro}[thm]{Proposition}
\newtheorem{ex}[thm]{Example}
\newtheorem{rmk}[thm]{Remark}
\newtheorem{defi}[thm]{Definition}
\newtheorem{pdef}[thm]{Proposition-Definition}
\newtheorem{condition}[thm]{Condition}
\newtheorem{con}[thm]{Conclusion}

\renewcommand{\labelenumi}{{\rm(\alph{enumi})}}
\renewcommand{\theenumi}{\alph{enumi}}

\nc{\tred}[1]{\textcolor{red}{#1}}
\nc{\tblue}[1]{\textcolor{blue}{#1}}
\nc{\tgreen}[1]{\textcolor{green}{#1}}
\nc{\tpurple}[1]{\textcolor{purple}{#1}}
\nc{\btred}[1]{\textcolor{red}{\bf #1}}
\nc{\btblue}[1]{\textcolor{blue}{\bf #1}}
\nc{\btgreen}[1]{\textcolor{green}{\bf #1}}
\nc{\btpurple}[1]{\textcolor{purple}{\bf #1}}

\nc{\dcy}[1]{\textcolor{purple}{Chengyu:#1}}
\nc{\cm}[1]{\textcolor{red}{Chengming:#1}}
\nc{\li}[1]{\textcolor{blue}{Li: #1}}
\nc{\lit}[2]{\textcolor{blue}{#1}{}} 
\nc{\yh}[1]{\textcolor{green}{Yunhe: #1}}

\title{Rota-Baxter bisystems and covariant bialgebras}

\author{Tianshui Ma}
\address{School of Mathematics and Information Science, Henan Normal University, Xinxiang 453007, China}
\email{matianshui@yahoo.com}

\author{Abdenacer Makhlouf}
\address{Universit{\'e} de Haute Alsace, Laboratoire de Math{\'e}matiques, Informatique et Applications, 4, rue des Fr{\`e}res Lumi{\`e}re F-68093 Mulhouse, France}
\email{abdenacer.makhlouf@uha.fr}

\author{Sergei Silvestrov} \thanks{{\it Corresponding author}: Sergei Silvestrov, sergei.silvestrov@mdh.se}
\address{Division of Applied Mathematics, School of Education,
Culture and Communication, M\"alardalen University, 72123 V\"aster\"as, Sweden}
\email{sergei.silvestrov@mdh.se}

\date{August 17, 2017}

\begin{abstract}
 The aim of this paper is first to introduce and study Rota-Baxter cosystems and bisystems  as generalization of Rota-Baxter coalgebras and bialgebras, respectively, with various examples. The second purpose is to provide an alternative definition of covariant bialgebras via coderivations. Furthermore, we consider  coquasitriangular covariant bialgebras which  are generalization of coquasitriangular infinitesimal bialgebras, coassociative Yang-Baxter pairs, coassociative Yang-Baxter equation, double coalgebras and dendriform coalgebras.  We study some of  their properties and relationships with Rota-Baxter cosystems and bisystems.
\end{abstract}

\subjclass{16T05, 16W99}

\keywords{Rota-Baxter cosystem; Rota-Baxter bisystem; Covariant bialgebra; Yang-Baxter equation; Coquasitriangular covariant bialgebra; Coquasitriangular infinitesimal bialgebra.}

\maketitle

\tableofcontents

\numberwithin{equation}{section}

\tableofcontents
\numberwithin{equation}{section}
\allowdisplaybreaks

\section{Introduction} \label{sec:Intro}
 Rota-Baxter algebras were introduced in \cite{Ro} in the context of differential operators on commutative Banach algebras and then led to an algebraic interpretation of Spitzer identity in fluctuation theory in probability and combinatorics \cite{Ba}. Since then,  they were  intensively studied and appeared in various areas such as mathematical physics, mainly in Connes-Kreimer renormalization theory in quantum field theory   (see \cite{EFG1,EFG2,EFG3,Gu00,GK00}), Lie algebras (see \cite{AB}), multiple zeta functions (see \cite{EFG1,GZ}), differential algebras (see \cite{GK08}). One can refer to the book \cite{Gu12} for the detailed theory of Rota-Baxter algebras. In 2014, based on the dual method in the Hopf algebra theory, Jian and Zhang defined  in \cite{JZ} the notion of Rota-Baxter coalgebras and also provided various examples. Therefore Rota-Baxter bialgebras were presented in \cite{ML} with examples constructed using Radford biproduct. In \cite{Br}, T. Brzezi\'{n}ski introduced the notion of Rota-Baxter system, where Rota-Baxter algebras can be viewed as  special case. Following the above idea, we aim in this paper  to study the dual version and introduce Rota-Baxter co-(bi-)systems.

 An associative (not necessarily unital) algebra $A$ that admits a coassociative comultiplication which is a derivation is called infinitesimal bialgebra, see \cite{Ag}. It is self-dual and  can be viewed as a coalgebra that admits an associative multiplication which is a coderivation. In \cite{Br}, T. Brzezi\'{n}ski proposed the concept of covariant bialgebra via two derivations. It is  an extension of infinitesimal bialgebras and quasitriangular covariant bialgebras obtained by Yang-Baxter pairs. It turns out that  covariant bialgebras are not self-dual. We provide  another approach to study covariant bialgebras through two coderivations, and at the same time study  similarly coquasitriangular infinitesimal bialgebras.

 This paper is organized as follows. In Section \ref{sec:Prelim}, we review some preliminaries and in Section \ref{sec:RBcosysbisysbialg}, we discuss the notions of Rota-Baxter cosystem, bisystem and relate them to Rota-Baxter coalgebras, pre-Lie coalgebras, weak copseudotwistors and Rota-Baxter bialgebras. Section \ref{sec:covarbialg} is devoted  to  study  covariant bialgebras via coderivations. Firstly, we introduce the notion of coassociative Yang-Baxter pair (abbr. CYBP), and then investigate their  relationships with  Rota-Baxter cosystems and bisystems. Secondly, we give an alternative definition of  covariant bialgebras through two coderivations and provide a  characterization. In Section \ref{sec:CoquasitriangCovBialgInfinBialg}, as a special case, coquasitriangular covariant bialgebras are considered. They are a generalization  coquasitriangular infinitesimal bialgebras. On the other hand, we introduce  the concept of right (left) covariant modules.  Furthermore, we consider  coassociative Yang-Baxter equation, double coalgebras and dendriform coalgebras.  We study some of  their properties and relationships with cosystems and bisystems. In Section \ref{sec:ExamplesRBbialgbisyst}, we provide various examples of Rota-Baxter bialgebras and bisystems in dimensions 2, 3 and 4.

\section{Preliminaries} \label{sec:Prelim}
 Throughout this paper, we assume that all vector spaces, algebras, coalgebras and tensor products are defined over a field $K$. An algebra is always assumed to be associative, but not necessarily unital. A coalgebra is always assumed to be coassociative, but not necessarily counital. Now, for a given a coalgebra $C$, we use Sweedler's notation for the comultiplication : $\D(c)=c_{1}\o c_{2}$ for any $c\in C$, see \cite{Ra12}. We denote the category of left $H$-comodules (resp. right $H$-comodules) by $^H \hbox{Mod}$ (resp.  $ \hbox{Mod}^H$). For $(M, \rho_l)\in$ $^H \hbox{Mod}$, we write: $\rho_l(x)=x_{(-1)}\o x_{(0)} \in H\o M$, for all $x \in M$. For $(M, \rho_r)\in$ $ \hbox{Mod}^H$, we write: $ \rho_r(x)=x_{(0)}\o x_{(1)} \in M\o H$, for all $x \in M$.  Given a $K$-space $M$, we write $id_M$ for the identity map on $M$. We say that a coalgebra $C$ is {\bf non-degenerate} if provided with a  linear map $f: C\lr C$ such that  for $\forall~c\in C$,  $c_1\o f(c_2)=0$ or $f(c_1)\o c_2=0$,  it  implies that $f=0$. Obviously, any counital coalgebra is non-degenerate.

 In what follows, we recall  some useful definitions which will be used later (see \cite{Br,Gu12,ML}).

\begin{defi}\label{de:2.1}
 {\rm  For a given field $K$ and $\l\in K$, a {\bf Rota-Baxter algebra of weight $\l$} is an algebra $A$ together with a $K$-linear map $R : A \lr A$ such that
 \begin{equation}\label{eq:2.1}
 R(a)R(b)=R(a R(b))+R(R(a) b)+\l R(a b)
 \end{equation}
 for all $a, b\in A$. Such a linear operator is called a {\bf Rota-Baxter operator of weight $\l$ on $A$}.}
\end{defi}

\begin{defi}\label{de:2.2}
 {\rm  Let $\g$ be an element in $K$. A pair $(C, Q)$ is called a {\bf Rota-Baxter coalgebra of weight $\gamma$} if $C$ is a coalgebra and $Q$ is a linear endomorphism of $C$ satisfying
 $$
 (Q\o Q)\D=(id\o Q)\D Q+(Q\o id)\D Q+\gamma\D Q,
 $$
 that is, for all $c\in C$,
 \begin{equation}\label{eq:2.2}
 Q(c_1)\o Q(c_2)=Q(c)_1\o Q(Q(c)_2)+Q(Q(c)_1)\o Q(c)_2+ \gamma Q(c)_1\o Q(c)_2.
 \end{equation}
 The map $Q$ is called a {\bf Rota-Baxter operator of weight $\gamma$ on $C$}. }
\end{defi}


\begin{defi}\label{de:2.3}
 {\rm  Let $\lambda$, $\gamma$ be elements in $K$ and $H$ a bialgebra (not necessarily  unital and counital). A triple $(H, R, Q)$ is called a {\bf Rota-Baxter bialgebra of weight $(\lambda, \gamma)$} if $(H, R)$ is a Rota-Baxter algebra of weight $\lambda$ and $(H, Q)$ is a Rota-Baxter coalgebra of weight $\gamma$. We say that we have a $(R,Q)$-Rota-Baxter structure of weight $(\lambda, \gamma)$ on the  bialgebra $H$.}
\end{defi}

\begin{defi}\label{de:2.4}
 {\rm  A triple $(A, R, S)$ consisting of an algebra $A$ and two $K$-linear operators $R, S: A \lr A$ is called a {\bf Rota-Baxter system} if, for all $a, b \in A$,
 \begin{eqnarray}
 R(a)R(b)&=&R(R(a)b+aS(b)), \label{eq:2.3}\\
 S(a)S(b)&=&S(R(a)b+aS(b)). \label{eq:2.4}
 \end{eqnarray}}
\end{defi}

\begin{defi}\label{de:2.5}
 {\rm  A {\bf pre-Lie coalgebra} is a pair  $(C,\D)$ consisting of a linear space $C$ and a  linear map $\D: C\lr C\o C$ satisfying
 \begin{equation}\label{eq:2.5}
 \D_{C}-\Phi_{(12)}\D_{C}=0,
 \end{equation}
 where $\D_{C}=(\D\o id)\D-(id\o\D)\D$ and $\Phi_{(12)}(c_1\o c_2\o c_3)=c_2\o c_1\o c_3$.}
\end{defi}

\section{Rota-Baxter cosystems, Rota-Baxter bisystems and Rota-Baxter bialgebras} \label{sec:RBcosysbisysbialg}
 In this section we introduce  Rota-Baxter cosystems, bisystems and relate them to Rota-Baxter coalgebras, bialgebras and pre-Lie coalgebras.

\begin{defi}\label{de:3.1}
 {\rm  A triple $(C, Q, T)$ consisting of a coalgebra $C$ and two $K$-linear operators $Q, T: C\lr C$ is called a {\bf Rota-Baxter cosystem} if, for all $c\in C$,
 \begin{eqnarray}
 Q(c_1)\o Q(c_2)&=&Q(Q(c)_1)\o Q(c)_2+Q(c)_1\o T(Q(c)_2),  \label{eq:3.1}\\
 T(c_1)\o T(c_2)&=&Q(T(c)_1)\o T(c)_2+T(c)_1\o T(T(c)_2).  \label{eq:3.2}
 \end{eqnarray}

 A {\bf morphism of Rota-Baxter cosystems between $(C, Q_C, T_C)$ and $(D, Q_D, T_D)$} is a coalgebra map $f: C\lr D$ such that $f\ci Q_C=Q_D\ci f$ and $f\ci T_C=T_D\ci f$.}
\end{defi}

\begin{ex}\label{ex:3.2}
 Let $C$ be a coassociative coalgebra, $g: C\lr C$ a comultiplicative map and $Q: C\lr C$ a $K$-linear map such that for all $c\in C$,
 \begin{equation}\label{eq:3.3}
 Q(c_1)\o Q(c_2)=Q(Q(c)_1)\o Q(c_2)+Q(c)_1\o Q^g(Q(c)_2),
 \end{equation}
 where $Q^g=Q\ci g$. Then $(C, Q, Q^g)$ is a Rota-Baxter cosystem.

 \begin{proof} Eq.(\ref{eq:3.3}) is exactly Eq.(\ref{eq:3.1}) for $(Q, Q^g)$ and Eq.(\ref{eq:3.2}) can be checked by Eq.(\ref{eq:3.3}) and $g: C\lr C$ is a comultiplicative map. \end{proof}
\end{ex}

 The following proposition states that Rota-Baxter coalgebra can be viewed as a special case of Rota-Baxter cosystem.

\begin{pro}\label{pro:3.3}
 Let $C$ be a coalgebra. If $(C, Q)$ is a Rota-Baxter coalgebra of weight $\g$, then $(C, Q, Q+\g id)$ and $(C, Q+\g id, Q)$ are Rota-Baxter cosystems.
\end{pro}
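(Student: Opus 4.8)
The plan is to translate the Sweedler-notation identities into operator equations on $C$ and then reduce everything to the single Rota-Baxter coalgebra identity~(\ref{eq:2.2}). In map form, the defining condition~(\ref{eq:3.1}) for a pair $(Q,T)$ reads $(Q\o Q)\D=(Q\o id)\D Q+(id\o T)\D Q$ and~(\ref{eq:3.2}) reads $(T\o T)\D=(Q\o id)\D T+(id\o T)\D T$, while the hypothesis~(\ref{eq:2.2}) becomes $(Q\o Q)\D=(Q\o id)\D Q+(id\o Q)\D Q+\g\,\D Q$. The only extra ingredient needed is that $\D$ is linear, so that $\D(Q+\g\,id)=\D Q+\g\,\D$ and all tensor factors expand by bilinearity.

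First I would treat $(C,Q,Q+\g\,id)$, i.e. $T=Q+\g\,id$. For~(\ref{eq:3.1}) I substitute $T=Q+\g\,id$ into the last term: $(id\o T)\D Q=(id\o Q)\D Q+\g\,\D Q$, and adding $(Q\o id)\D Q$ reproduces exactly the right-hand side of~(\ref{eq:2.2}), so~(\ref{eq:3.1}) is just the hypothesis. For~(\ref{eq:3.2}) I expand both sides of $(T\o T)\D=(Q\o id)\D T+(id\o T)\D T$ in powers of $\g$ using $T=Q+\g\,id$; the left side becomes $(Q\o Q)\D+\g(Q\o id)\D+\g(id\o Q)\D+\g^2\D$, and after substituting~(\ref{eq:2.2}) for the $(Q\o Q)\D$ summand it matches the similarly expanded right side term by term.

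Next I would handle $(C,Q+\g\,id,Q)$, where the first operator is $Q'=Q+\g\,id$ and the second is $T'=Q$. The computation is the mirror image: for~(\ref{eq:3.1}) I expand $(Q'\o Q')\D$ and $(Q'\o id)\D Q'+(id\o Q)\D Q'$ in $\g$ and again collapse the $(Q\o Q)\D$ term via~(\ref{eq:2.2}); for~(\ref{eq:3.2}) the left side is simply $(Q\o Q)\D$ and the right side is $(Q'\o id)\D Q+(id\o Q)\D Q=(Q\o id)\D Q+\g\,\D Q+(id\o Q)\D Q$, which is precisely the right-hand side of~(\ref{eq:2.2}).

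The computations are entirely routine once set up in operator form; the only point requiring care — and the one place a direct Sweedler-notation manipulation is error-prone — is that the subscripts in an expression such as $T(c)_1\o T(c)_2$ denote $\D(T(c))$, which must be expanded as $\D Q(c)+\g\,\D(c)$ via linearity of $\D$ before the outer operators are applied. Keeping this bookkeeping straight is the main, and essentially the only, obstacle.
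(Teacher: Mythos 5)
Your proof is correct and takes essentially the same approach as the paper: direct substitution of $T=Q+\g\, id$ (resp.\ $Q'=Q+\g\, id$), expansion by bilinearity of $\D$ and the tensor factors, and collapsing the three terms $(Q\o id)\D Q+(id\o Q)\D Q+\g\,\D Q$ via Eq.~(\ref{eq:2.2}). The paper carries out the identical computation in Sweedler notation (writing out only Eq.~(\ref{eq:3.2}) for $(C,Q,Q+\g\, id)$ and declaring the remaining verifications obvious or similar), whereas you work in operator form, which is a purely notational difference.
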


\begin{proof} We only check that $(C, Q, Q+\g id)$ is a Rota-Baxter cosystem as follows:
 \begin{eqnarray*}
 \hbox{RHS of Eq.(\ref{eq:3.2})}&\stackrel{T=Q+\g id}{=}&Q(Q(c)_1)\o Q(c)_2+\g Q(c)_1\o c_2+Q(c)_1\o Q(Q(c)_2)\\
 &&+\g Q(c)_1\o Q(c)_2+\g c_1\o Q(c_2)+\g^2 c_1\o c_2\\
 &\stackrel{(\ref{eq:2.2})}{=}&Q(c_1)\o Q(c_2)+\g Q(c)_1\o c_2+\g c_1\o Q(c_2)+\g^2 c_1\o c_2\\
 &\stackrel{}{=}&(Q(c_1)+\g c_1)\o (Q(c_2)+\g c_2)\\
 &\stackrel{}{=}&\hbox{LHS of Eq.(\ref{eq:3.2})}.
 \end{eqnarray*}
 It is obvious that Eq.(\ref{eq:3.1}) holds for $(C, Q, Q+\g id)$ by using Eq.(\ref{eq:2.2}). The proof for $(C, Q+\g id, Q)$ is similar.    \end{proof}

\begin{pro}\label{pro:3.4}
 Let $C$ be a coalgebra, $Q: C\lr C$ a left $C$-colinear map and let $T: C\lr C$ be a right $C$-colinear map. Then $(C, Q, T)$ is a Rota-Baxter cosystem if and only if, for all $c\in C$,
 \begin{equation}\label{eq:3.4}
 c_1\o (T\ci Q)(c_2)=(Q\ci T)(c_1)\o c_2=0.
 \end{equation}
 In particular, if $C$ is a non-degenerate coalgebra, then $(C, Q, T)$ is a Rota-Baxter cosystem (with $Q$ left and $T$ right $C$-colinear) if and only if $Q$ and $T$ satisfy the orthogonality condition
 \begin{equation}\label{eq:3.5}
 T\ci Q=Q\ci T=0.
 \end{equation}
\end{pro}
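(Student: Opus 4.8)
The plan is to unfold the two colinearity hypotheses and feed them directly into the defining identities \eqref{eq:3.1} and \eqref{eq:3.2}, so that each reduces to a single tensor equation. First I would record the hypotheses in Sweedler form: $Q$ being left $C$-colinear means $Q(c)_1\o Q(c)_2=c_1\o Q(c_2)$, and $T$ being right $C$-colinear means $T(c)_1\o T(c)_2=T(c_1)\o c_2$, for all $c\in C$.

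Next I would simplify the right-hand side of \eqref{eq:3.1}. Applying $Q\o id$ and then $id\o T$ to the left-colinearity identity rewrites its two summands as $Q(Q(c)_1)\o Q(c)_2=Q(c_1)\o Q(c_2)$ and $Q(c)_1\o T(Q(c)_2)=c_1\o(T\ci Q)(c_2)$. Since the left-hand side of \eqref{eq:3.1} is already $Q(c_1)\o Q(c_2)$, the identity \eqref{eq:3.1} is seen to be equivalent to $c_1\o(T\ci Q)(c_2)=0$. Symmetrically, using the right-colinearity of $T$ I would rewrite the summands of the right-hand side of \eqref{eq:3.2} as $Q(T(c)_1)\o T(c)_2=(Q\ci T)(c_1)\o c_2$ and $T(c)_1\o T(T(c)_2)=T(c_1)\o T(c_2)$; since the left-hand side of \eqref{eq:3.2} is $T(c_1)\o T(c_2)$, that identity is equivalent to $(Q\ci T)(c_1)\o c_2=0$. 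Combining the two equivalences yields exactly \eqref{eq:3.4}.

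For the non-degenerate case, I would invoke the definition of non-degeneracy twice: applied to $f=T\ci Q$, the relation $c_1\o(T\ci Q)(c_2)=0$ forces $T\ci Q=0$, and applied to $f=Q\ci T$, the relation $(Q\ci T)(c_1)\o c_2=0$ forces $Q\ci T=0$. The converse is immediate, since each half of \eqref{eq:3.4} contains a vanishing composite; this gives the orthogonality condition \eqref{eq:3.5}.

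There is no substantial obstacle here, as the argument is a direct substitution; the only thing to watch is applying each colinearity identity to the correct tensor slot, so that the composites $T\ci Q$ and $Q\ci T$ land in the intended factor and the bookkeeping between \eqref{eq:3.1}, \eqref{eq:3.2} and the two halves of \eqref{eq:3.4} stays consistent.
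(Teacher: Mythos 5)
Your proposal is correct and follows essentially the same route as the paper's proof: unfold the two colinearity identities in Sweedler notation, observe that (\ref{eq:3.1}) reduces to $c_1\o(T\ci Q)(c_2)=0$ and (\ref{eq:3.2}) to $(Q\ci T)(c_1)\o c_2=0$, then apply non-degeneracy to the composites. Your version is simply a more carefully spelled-out account of the same substitution argument the paper states in two lines.
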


\begin{proof} $Q$ is left $C$-colinear and $T$ is right $C$-colinear, that is,
 $Q(c)_1\o Q(c)_2=c_1\o Q(c_2)$ and $T(c)_1\o T(c)_2=T(c_1)\o  c_2$. Thus Eq.(\ref{eq:3.1}) holds if and only if $c_1\o (T\ci Q)(c_2)=0$ and Eq.(\ref{eq:3.2}) holds if and only if $(Q\ci T)(c_1)\o c_2=0$. Since $C$ is non-degenerate $Q\ci T=0$.
  \end{proof}

\begin{ex}\label{ex:3.5}
 Let $C$ be a coalgebra. Assume that $\s, \tau\in C^*$ are such that for $\forall~c\in C$, $\s(c_1)\tau(c_2)=0$ and $\s(c_1)c_2=c_1\s(c_2)$. Define $Q, T: C\lr C$ by
 $$
 Q(c)=c_1\s(c_2),~~~~T(c)=\tau(c_1)c_2
 $$
 for all $c\in C$. Then it is obvious that $Q$ is left $C$-colinear and $T$ is right $C$-colinear since the comodule action is the coproduct in $C$. While
 $$
 c_1\o (T\ci Q)(c_2)=c_1\o \tau(c_2)c_3\s(c_4)=c_1\o \s(c_2)\tau(c_3)c_4=0
 $$
 and
 $$
 (Q\ci T)(c_1)\o c_2=\tau(c_1)c_2\s(c_3)\o c_4=\s(c_1)\tau(c_2)c_3\o c_4=0.
 $$
 Therefore $(C, Q, T)$ is a Rota-Baxter cosystem.
\end{ex}

\begin{ex} \label{ex:3.6}
We construct Rota-Baxter bisystems for $T_2$, the unital Taft-Sweedler algebra generated by $g,x$ and the relations $(g^2=1,\ x^2=0, \ x g=-g x).$ The comultiplication is defined by $\Delta (g)=g\otimes g$ and   $\Delta (x)=x\otimes 1+g\otimes x$, the counit is given by $\varepsilon(g)=1,\ \varepsilon (x)=0.$

The bialgebra $T_2$ is 4-dimensional, it is defined with respect to a basis
$
\{u_1=1, \ u_2=g,\ u_3=x,\ u_4=gx\}
$
by the following table
which  describes multiplying the $i$th row elements  by the $j$th column elements.

\[
\begin{array}{|c|c|c|c|c|}
  \hline
   \ & u_1& u_2 & u_3 & u_4  \\ \hline
   u_1& u_1& u_2 & u_3 & u_4 \\ \hline
   u_2 &u_2 & u_1 & u_4 & u_3 \\ \hline
   u_3 &u_3 & -u_4 & 0 & 0  \\ \hline
   u_4 &u_4 & -u_3 & 0 & 0  \\ \hline
\end{array}
\]
and
\begin{eqnarray*}
&& \Delta(u_1)=u_1 \otimes u_1,\  \Delta(u_2)=u_2 \otimes u_2,\ \Delta(u_3)=u_1 \otimes u_3+u_3 \otimes u_2,\ \Delta(u_4)=u_2 \otimes u_4+u_4 \otimes u_1.
\end{eqnarray*}
\begin{equation*}
\varepsilon (u_1)=\varepsilon (u_2)=1,\quad \varepsilon (u_3)=\varepsilon (u_4)=0.
\end{equation*}

A bisystem is given by a pair $(R,S)$ and a pair $(Q,T)$ taken from the following list for $(R,S)$

\begin{itemize}
\item $R(u_1)=0, \  R(u_2)=0, \ R(u_3)=0, \ R(u_4)=p u_4, $
\\  $S(u_1)=-p u_1, \  S(u_2)=0, \ S(u_3)=0, \ S(u_4)=0, $
\item $R(u_1)=0, \  R(u_2)=0, \ R(u_3)=-p u_3, \ R(u_4)=-p u_4, $
\\  $S(u_1)=p u_1, \  S(u_2)=p u_2, \ S(u_3)=0, \ S(u_4)=0, $
\item $R(u_1)=0, \  R(u_2)=0, \ R(u_3)=p u_3, \ R(u_4)=0, $
\\  $S(u_1)=-p u_1, \  S(u_2)=0, \ S(u_3)=0, \ S(u_4)=0, $
\item $R(u_1)=-p u_1, \  R(u_2)=0, \ R(u_3)=0, \ R(u_4)=0, $
\\  $S(u_1)=0, \  S(u_2)=0, \ S(u_3)=p u_3, \ S(u_4)=0, $
\item $R(u_1)=-p u_1, \  R(u_2)=-p u_2, \ R(u_3)=0, \ R(u_4)=0, $
\\  $S(u_1)=0, \  S(u_2)=0, \ S(u_3)=p u_3, \ S(u_4)=p u_3, $
\item $R(u_1)=-p u_1, \  R(u_2)=-p u_2, \ R(u_3)=0, \ R(u_4)=0, $
\\  $S(u_1)=0, \  S(u_2)=0, \ S(u_3)=p u_3, \ S(u_4)=-p u_3, $
\item $R(u_1)=-p u_1, \  R(u_2)=0, \ R(u_3)=0, \ R(u_4)=0, $
\\  $S(u_1)=0, \  S(u_2)=0, \ S(u_3)=0, \ S(u_4)=p u_4, $
\item $R(u_1)=p u_1, \  R(u_2)=0, \ R(u_3)=0, \ R(u_4)=0, $
\\  $S(u_1)=0, \  S(u_2)=0, \ S(u_3)=0, \ S(u_4)=0, $
\end{itemize}
where   $p$ is a parameter, and the following list for $(Q,T)$

\begin{itemize}
\item $Q(u_1)=0, \  Q(u_2)=0, \ Q(u_3)=0, \ Q(u_4)=-q_1 u_4, $
\\  $T(u_1)=q_1 u_1, \  T(u_2)=q_2 u_2, \ T(u_3)=q_2 u_3, \ T(u_4)=0, $
\item $Q(u_1)=0, \  Q(u_2)=0, \ Q(u_3)=0, \ Q(u_4)=-q_1 u_4, $
\\  $T(u_1)=q_1 u_1, \  T(u_2)=q_2 u_2, \ T(u_3)=0, \ T(u_4)=0, $
\item $Q(u_1)=0, \  Q(u_2)=0, \ Q(u_3)=-q_1 u_3, \ Q(u_4)=0, $
\\  $T(u_1)=q_1 u_1, \  T(u_2)=q_2 u_2, \ T(u_3)=0, \ T(u_4)=q_2 u_4, $
\item $Q(u_1)=0, \  Q(u_2)=0, \ Q(u_3)=-q_1 u_3, \ Q(u_4)=0, $
\\  $T(u_1)=q_1 u_1, \  T(u_2)=q_2 u_2, \ T(u_3)=0, \ T(u_4)=0, $
\item $Q(u_1)=0, \  Q(u_2)=0, \ Q(u_3)=-q_1 u_3, \ Q(u_4)=-q_1 u_4, $
\\  $T(u_1)=q_1 u_1, \  T(u_2)=q_2 u_2, \ T(u_3)=0, \ T(u_4)=0, $
\item $Q(u_1)=0, \  Q(u_2)=-q_1 u_2, \ Q(u_3)=0, \ Q(u_4)=0, $
\\  $T(u_1)=q_1 u_1, \  T(u_2)=0, \ T(u_3)=q_2 u_3, \ T(u_4)=0, $
\item $Q(u_1)=0, \  Q(u_2)=-q_1 u_2, \ Q(u_3)=0, \ Q(u_4)=0, $
\\  $T(u_1)=q_2 u_1, \  T(u_2)=0, \ T(u_3)=q_1 u_3, \ T(u_4)=q_1 u_4, $
\item $Q(u_1)=0, \  Q(u_2)=-q_1 u_2, \ Q(u_3)=0, \ Q(u_4)=-q_2 u_4, $
\\  $T(u_1)=q_2 u_1, \  T(u_2)=0, \ T(u_3)=q_1 u_1, \ T(u_4)=0, $
\end{itemize}
where   $q_1,q_2$ are parameters.
\end{ex}

\begin{pro}\label{pro:3.6}
 Let $(C, Q, T)$ be a Rota-Baxter cosystem. Then

 (1) $(C, \D_{*})$ with $\D_{*}: C\lr C\o C$, defined by
 \begin{equation}\label{eq:3.6}
 \D_{*}(c)=Q(c_1)\o c_2+c_1\o T(c_2)
 \end{equation}
 for all $c\in C$, is a coassociative coalgebra.

 (2) $(C, \D_{\bullet})$ with $\D_{\bullet}: C\lr C\o C$, defined by
 \begin{equation}\label{eq:3.7}
 \D_{\bullet}(c)=Q(c_1)\o c_2-T(c_2)\o c_1
 \end{equation}
 for all $c\in C$, is a pre-Lie coalgebra.
\end{pro}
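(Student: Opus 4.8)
The plan is to establish both parts by a direct Sweedler-notation computation, using only coassociativity of $C$ together with the two cosystem axioms Eq.(\ref{eq:3.1}) and Eq.(\ref{eq:3.2}); no machinery beyond these is needed.

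For part (1), I would expand $(\D_{*}\o id)\D_{*}(c)$ and $(id\o\D_{*})\D_{*}(c)$ by substituting the definition Eq.(\ref{eq:3.6}) twice, writing each iterated coproduct in the flattened form $c_1\o c_2\o c_3$ on every leg that carries no operator. Each side then produces four summands. The two ``mixed'' contributions, both equal to $Q(c_1)\o c_2\o T(c_3)$, match and cancel between the two sides. Of the six remaining terms, the two carrying a coproduct of $Q(c_1)$ (arising on the left) sum, by Eq.(\ref{eq:3.1}) applied to the inner element $c_1$ and then collapsing the spectator leg by coassociativity, to the term $Q(c_1)\o Q(c_2)\o c_3$ on the right; dually, the single $T$-heavy term on the left equals, by Eq.(\ref{eq:3.2}) applied to $c_2$, the sum of the two terms carrying a coproduct of $T(c_2)$ on the right. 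After these substitutions the two sides coincide termwise, which is the coassociativity $(\D_{*}\o id)\D_{*}=(id\o\D_{*})\D_{*}$.

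For part (2), I would compute the co-associator $\D_{C}=(\D_{\bullet}\o id)\D_{\bullet}-(id\o\D_{\bullet})\D_{\bullet}$ from Eq.(\ref{eq:3.7}). Expansion yields eight terms. The four coming from $(id\o\D_{\bullet})\D_{\bullet}$ carry an operator on a single leg, so they flatten at once by coassociativity to expressions in $c_1,c_2,c_3$. The four coming from $(\D_{\bullet}\o id)\D_{\bullet}$ carry an \emph{iterated} operator, namely a coproduct of $Q(c_1)$ or of $T(c_2)$; these I would rewrite using Eq.(\ref{eq:3.1}) on the $Q(c_1)$ legs and Eq.(\ref{eq:3.2}) on the $T(c_2)$ legs, again applying the axiom to the inner element and collapsing the spectator by coassociativity. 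The decisive observation is that once all substitutions are made, every surviving summand occurs together with its image under $\Phi_{(12)}$, so $\D_{C}$ has the shape $u+\Phi_{(12)}(u)$ for a single $u\in C\o C\o C$. Since $\Phi_{(12)}^{2}=id$, this forces $\Phi_{(12)}\D_{C}=\D_{C}$, i.e. $\D_{C}-\Phi_{(12)}\D_{C}=0$, which is exactly the pre-Lie coalgebra axiom Eq.(\ref{eq:2.5}).

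The main obstacle is bookkeeping rather than ideas: since $Q$ and $T$ are \emph{not} assumed colinear, a leg such as $Q(c_1)_1\o Q(c_1)_2$ cannot be flattened by coassociativity of $C$, and one must instead invoke Eq.(\ref{eq:3.1})/Eq.(\ref{eq:3.2}) on the correct inner element while treating the other leg as a passive spectator. Keeping the Sweedler indices aligned through these substitutions, and recognizing the $\Phi_{(12)}$-pairing in part (2), is where the care lies; once the pairing is seen, the $\Phi_{(12)}$-symmetry of $\D_{C}$ is immediate.
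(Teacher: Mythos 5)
Your proposal is correct and follows essentially the same route as the paper's own proof: both parts are verified by the identical direct Sweedler expansion, applying Eq.(\ref{eq:3.1}) and Eq.(\ref{eq:3.2}) to the nested $Q(c_1)$- and $T(c_2)$-legs while collapsing the spectator leg by coassociativity, with the mixed term $Q(c_1)\o c_2\o T(c_3)$ cancelling between the two sides in part (1). In part (2) your explicit observation that the residual co-associator has the form $u+\Phi_{(12)}(u)$ (hence is $\Phi_{(12)}$-symmetric) is precisely the pairing the paper leaves implicit when it asserts $(\D_{C}-\Phi_{(12)}\D_{C})(c)=0$ from its six-term expression, so you have if anything made the paper's final step more transparent.
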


\begin{proof}
 (1) The coassociativity for $(C, \D_{*})$ can be proved as follows. For all $c\in C$, we have
 \begin{eqnarray*}
 (\D_{*}\o id)\ci \D_{*}(c)&\stackrel{}{=}&Q(Q(c_1)_1)\o Q(c_1)_2\o c_2+Q(c_1)_1\o T(Q(c_1)_2)\o c_2\\
 &&+Q(c_{11})\o c_{12}\o T(c_2)+c_{11}\o T(c_{12}\o T(c_2)\\
 &\stackrel{(\ref{eq:3.2})}{=}&Q(Q(c_1)_1)\o Q(c_1)_2\o c_2+Q(c_1)_1\o T(Q(c_1)_2)\o c_2\\
 &&+Q(c_{11})\o c_{12}\o T(c_2)+c_{1}\o Q(T(c_{2})_1)\o T(c_2)_2\\
 &&+c_{1}\o T(c_{2})_1\o T(T(c_2)_2)\\
 &\stackrel{(\ref{eq:3.1})}{=}&Q(c_1)\o Q(c_{21})\o c_{22}+Q(c_1)\o c_{21}\o T(c_{22})\\
 &&+c_{1}\o Q(T(c_{2})_1)\o T(c_2)_2+c_{1}\o T(c_{2})_1\o T(T(c_2)_2)\\
 &\stackrel{}{=}&(id\o \D_{*})\ci \D_{*}(c).
 \end{eqnarray*}

 (2) Since
 \begin{eqnarray*}
 \D_{C}(c)&\stackrel{}{=}&((\D_{\bullet}\o id)\ci \D_{\bullet}-(id\o \D_{\bullet})\ci \D_{\bullet})(c)\\
 &\stackrel{}{=}&Q(Q(c_1)_1)\o Q(c_1)_2\o c_2-T(Q(c_1)_2)\o Q(c_1)_1\o c_2\\
 &&-Q(T(c_{2})_1)\o T(c_2)_2\o c_{1}+T(T(c_2)_2)\o T(c_{2})_1\o c_{1}\\
 &&-Q(c_{1})\o Q(c_{21})\o c_{22}+Q(c_1)\o T(c_{22})\o c_{21}\\
 &&+T(c_{2})\o Q(c_{11})\o c_{12}-T(c_{2})\o T(c_{12})\o c_{11}\\
 &\stackrel{(\ref{eq:3.1})(\ref{eq:3.2})}{=}&-T(Q(c_1)_2)\o Q(c_1)_1\o c_2-Q(T(c_{2})_1)\o T(c_2)_2\o c_{1}\\
 &&-Q(c_1)_1\o T(Q(c_1)_2)\o c_2-Q(c_{1})\o T(c_{22})\o c_{21}\\
 &&+T(c_{2})\o Q(c_{11})\o c_{12}-T(c_{2})_{2}\o Q(T(c_{2})_{1})\o c_{1},
 \end{eqnarray*}
 we can get $(\D_{C}-\Phi_{(12)}\D_{C})(c)=0$. Therefore $(C, \D_{\bullet})$ is a pre-Lie coalgebra. \end{proof}

 \begin{rmk}\label{rmk:3.7}
 Here the pre-Lie coalgebra is dual to pre-Lie algebra in \cite{AB}.
 \end{rmk}

 \smallskip

 Next we introduce the notion of weak copseudotwistor and relate it to Rota-Baxter cosystem.

\begin{defi}\label{de:3.8}
 {\rm  Let $C$ be a coalgebra with coassociative coproduct $\D: C\lr C\o C$. A $K$-linear map $F: C\o C\lr C\o C$ is called a {\bf weak copseudotwistor} if there exists a $K$-linear map $\widetilde{F}: C\o C\o C\lr C\o C\o C$, rendering commutative the following diagram:

\begin{equation}\label{eq:3.8}
\unitlength 1mm 
\linethickness{0.4pt}
\ifx\plotpoint\undefined\newsavebox{\plotpoint}\fi 
\begin{picture}(129,39.75)(0,0)
\put(53,6.75){\line(-1,0){.25}}
\put(53,37.5){\line(-1,0){.25}}
\put(77.25,7){\vector(1,0){14}}
\put(77.25,37.75){\vector(1,0){14}}
\put(61,7.5){\vector(-1,0){14}}
\put(61,38.25){\vector(-1,0){14}}
\put(64.25,6.5){$C\o C$}
\put(64.25,37.25){$C\o C$}
\put(27.75,6.75){$C\o C\o C$}
\put(27.75,37.5){$C\o C\o C$}
\put(1.75,21.25){$C\o C\o C$}
\put(95.25,6.75){$C\o C\o C$}
\put(95.25,37.5){$C\o C\o C$}
\put(124.5,20.75){$C\o C\o C$.}
\put(128,25.75){\vector(3,-1){.07}}\multiput(103.75,34.75)(.09082397,-.033707865){267}{\line(1,0){.09082397}}
\put(129,19){\vector(4,1){.07}}\multiput(103.75,11.75)(.11744186,.03372093){215}{\line(1,0){.11744186}}
\put(13.25,25.75){\vector(-3,-1){.07}}\multiput(37.25,35)(-.087272727,-.033636364){275}{\line(-1,0){.087272727}}
\put(13,19.25){\vector(-3,1){.07}}\multiput(38,10.25)(-.093632959,.033707865){267}{\line(-1,0){.093632959}}
\put(68.25,10.75){\vector(0,1){23.25}}
\put(70.25,21){$F$}
\put(118,10.75){$\widetilde{F}$}
\put(13.75,11.75){$\widetilde{F}$}
\put(49.25,9.25){$id\o \D$}
\put(49.25,39.25){$id\o \D$}
\put(77.75,9.25){$\D\o id$}
\put(77.75,39.75){$\D\o id$}
\put(13.25,31.5){$id\o F$}
\put(117.5,32){$F\o id$}
\end{picture}
\end{equation}
 The map $\widetilde{F}$ is called a {\bf weak companion of $F$}.}
 \end{defi}

\begin{pro}\label{pro:3.9}  If $(C, Q, T)$ is a Rota-Baxter cosystem, then
 \begin{equation}\label{eq:3.9}
 F: C\o C\lr C\o C,~~c\o d\mapsto Q(c)\o d+c\o T(d)
 \end{equation}
 is a weak copseudotwistor.
\end{pro}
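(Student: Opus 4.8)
The plan is to exhibit an explicit weak companion $\widetilde{F}$ and check that it makes the two wings of diagram (\ref{eq:3.8}) commute. Reading off the diagram, commutativity is equivalent to the two identities
\begin{equation*}
(id\o F)\ci(id\o\D)\ci F=\widetilde{F}\ci(id\o\D),\qquad (F\o id)\ci(\D\o id)\ci F=\widetilde{F}\ci(\D\o id),
\end{equation*}
viewed as maps $C\o C\lr C\o C\o C$: the first identity comes from the left wing (assembled from $id\o\D$ and $id\o F$) and the second from the right wing (assembled from $\D\o id$ and $F\o id$). So the whole statement reduces to producing a \emph{single} map $\widetilde{F}: C\o C\o C\lr C\o C\o C$ that satisfies both. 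Guided by expanding the left-hand sides in Sweedler notation, I would define
\begin{equation*}
\widetilde{F}(x\o y\o z)=Q(x)\o Q(y)\o z+Q(x)\o y\o T(z)+x\o T(y)\o T(z).
\end{equation*}

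Next I would verify the right wing. Starting from $F(c\o d)=Q(c)\o d+c\o T(d)$ and applying $\D\o id$ gives $Q(c)_1\o Q(c)_2\o d+c_1\o c_2\o T(d)$; then applying $F\o id$ and collapsing the first two tensor slots of the leading term by the cosystem axiom (\ref{eq:3.1}), namely $Q(Q(c)_1)\o Q(c)_2+Q(c)_1\o T(Q(c)_2)=Q(c_1)\o Q(c_2)$, produces exactly $Q(c_1)\o Q(c_2)\o d+Q(c_1)\o c_2\o T(d)+c_1\o T(c_2)\o T(d)$, which is precisely $\widetilde{F}(c_1\o c_2\o d)=\widetilde{F}\ci(\D\o id)(c\o d)$. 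The left wing is entirely symmetric: $(id\o\D)\ci F$ yields $Q(c)\o d_1\o d_2+c\o T(d)_1\o T(d)_2$, and after $id\o F$ the last two slots of the second summand collapse through axiom (\ref{eq:3.2}), $Q(T(d)_1)\o T(d)_2+T(d)_1\o T(T(d)_2)=T(d_1)\o T(d_2)$, giving $Q(c)\o Q(d_1)\o d_2+Q(c)\o d_1\o T(d_2)+c\o T(d_1)\o T(d_2)=\widetilde{F}(c\o d_1\o d_2)$.

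I do not expect a genuine obstacle beyond the Sweedler bookkeeping. Two observations keep it clean: coassociativity of $\D$ is never needed, only the two defining relations of the cosystem; and the real content is simply recognizing which axiom effects each collapse (axiom (\ref{eq:3.1}) on the $\D\o id$ wing, axiom (\ref{eq:3.2}) on the $id\o\D$ wing), while the ``mixed'' middle terms $Q(c_1)\o c_2\o T(d)$ and $Q(c)\o d_1\o T(d_2)$ are reproduced for free and require no axiom at all. Hence $\widetilde{F}$ as defined is a weak companion of $F$, and $F$ is a weak copseudotwistor.
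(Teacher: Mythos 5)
Your proof is correct and essentially identical to the paper's: you exhibit the same weak companion $\widetilde{F}(x\o y\o z)=Q(x)\o Q(y)\o z+Q(x)\o y\o T(z)+x\o T(y)\o T(z)$ and verify the two wings of diagram (\ref{eq:3.8}) by the same Sweedler computation, with axiom (\ref{eq:3.1}) effecting the collapse on the $\D\o id$ wing and axiom (\ref{eq:3.2}) on the $id\o\D$ wing. Incidentally, your attribution of the left-wing step to (\ref{eq:3.2}) is the correct one --- the paper's displayed proof cites (\ref{eq:3.1}) there, which is a typo, since the relation actually used is $T(d_1)\o T(d_2)=Q(T(d)_1)\o T(d)_2+T(d)_1\o T(T(d)_2)$.
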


\begin{proof} Define $K$-linear map $\widetilde{F}: C\o C\o C\lr C\o C\o C$,
 \begin{equation}\label{eq:3.10}
 \widetilde{F}(c\o d\o e)=Q(c)\o Q(d)\o e+Q(c)\o d\o T(e)+c\o T(d)\o T(e),
 \end{equation}
 where $c, d, e\in C$. Then we  obtain
 \begin{eqnarray*}
 (\widetilde{F}\ci (\D\o id))(c\o d)&\stackrel{}{=}&Q(c_1)\o Q(c_2)\o d+Q(c_1)\o c_2\o T(d)+c_1\o T(c_2)\o T(d)\\
 &\stackrel{(\ref{eq:3.1})}{=}&Q(Q(c)_1)\o Q(c)_2\o d+Q(c)_1\o T(Q(c)_2)\o d\\
 &&+Q(c_1)\o c_2\o T(d)+c_1\o T(c_2)\o T(d)\\
 &\stackrel{}{=}&((F\o id)\ci (\D\o id)\ci F)(c\o d)
 \end{eqnarray*}
 and
 \begin{eqnarray*}
 (\widetilde{F}\ci (id\o \D))(c\o d)&\stackrel{}{=}&Q(c)\o Q(d_1)\o d_2+Q(c)\o d_1\o T(d_2)+c\o T(d_1)\o T(d_2)\\
 &\stackrel{(\ref{eq:3.1})}{=}&Q(c)\o Q(d_1)\o d_2+Q(c)\o d_1\o T(d_2)\\
 &&+c\o Q(T(d)_1)\o T(d)_2+c\o T(d)_1\o T(T(d)_2)\\
 &\stackrel{}{=}&((id\o F)\ci (id\o \D)\ci F)(c\o d).
 \end{eqnarray*}
 Thus Eq.(\ref{eq:3.8}) holds. This finishes the proof. \end{proof}

\begin{rmk}\label{rmk:3.10}
 (1) The coproduct $\D_{*}$ defined by Eq.(\ref{eq:3.6}) is simply equal to $F\ci \D$, where $F$ is given in Eq.(\ref{eq:3.9}).

 (2) If $C$ is a coalgebra satisfying that for a map $f: C\lr C$ such that for all $c, d\in C$, $c\o f(d)=f(c)\o d=0$ implies that $f=0$,  then $F$ given in Eq.(\ref{eq:3.9}) is a weak copseudotwistor with the companion defined in  Eq.(\ref{eq:3.10}) if and only if $(C, Q, T)$ is a Rota-Baxter cosystem.
\end{rmk}

\begin{defi}\label{de:3.11}
 {\rm  A quintuple $(H, R, S, Q, T)$ consisting of a bialgebra $H$ and four $K$-linear maps $R, S, Q, T: H\lr H$ is  called a {\bf Rota-Baxter bisystem} if $(H, R, S)$ is a Rota-Baxter system and $(H, Q, T)$ is a Rota-Baxter cosystem.}
\end{defi}

\begin{thm}\label{thm:3.12}
 Let $\l, \g\in K$ and $H$ a bialgebra (not necessarily unital and counital). If $(H, R, Q)$ is a Rota-Baxter bialgebra of weight $(\l, \g)$, then $(H, R, R+\l id, Q, Q+\g id)$, $(H, R+\l id, R, Q, Q+\g id)$, $(H, R, R+\l id, Q+\g id, Q)$, $(H, R+\l id, R, Q+\g id, Q)$ are Rota-Baxter bisystems.
\end{thm}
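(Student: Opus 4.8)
The plan is to reduce Theorem \ref{thm:3.12} entirely to the two facts already established about how a Rota-Baxter algebra/coalgebra structure expands into a system/cosystem. First I would recall the hypothesis: $(H,R,Q)$ being a Rota-Baxter bialgebra of weight $(\l,\g)$ means by Definition \ref{de:2.3} that $(H,R)$ is a Rota-Baxter algebra of weight $\l$ and $(H,Q)$ is a Rota-Baxter coalgebra of weight $\g$. By Definition \ref{de:3.11}, to prove that a quintuple is a Rota-Baxter bisystem I only need to verify two independent things: that the pair of multiplicative operators forms a Rota-Baxter system (Definition \ref{de:2.4}), and that the pair of comultiplicative operators forms a Rota-Baxter cosystem (Definition \ref{de:3.1}). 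These two verifications are completely decoupled, since the system condition involves only the algebra structure and $R,S$, while the cosystem condition involves only the coalgebra structure and $Q,T$.

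For the cosystem half, the work is already done: Proposition \ref{pro:3.3} states precisely that if $(C,Q)$ is a Rota-Baxter coalgebra of weight $\g$, then both $(C,Q,Q+\g\,id)$ and $(C,Q+\g\,id,Q)$ are Rota-Baxter cosystems. Hence for each of the four quintuples in the statement, whichever of the pairs $(Q,Q+\g\,id)$ or $(Q+\g\,id,Q)$ appears is automatically a cosystem on $H$. So I would simply invoke Proposition \ref{pro:3.3} and spend no further effort there.

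For the system half, I would establish the algebra-side analogue of Proposition \ref{pro:3.3}, namely that a weight-$\l$ Rota-Baxter algebra $(H,R)$ yields the Rota-Baxter systems $(H,R,R+\l\,id)$ and $(H,R+\l\,id,R)$. This is the genuine computation, and it is the dual mirror of the cosystem argument. Concretely, I would take $S=R+\l\,id$ and check Eq.(\ref{eq:2.3}) and Eq.(\ref{eq:2.4}) of Definition \ref{de:2.4} directly from the Rota-Baxter identity Eq.(\ref{eq:2.1}). For Eq.(\ref{eq:2.3}) one expands $R(R(a)b+aS(b)) = R(R(a)b)+R(aR(b))+\l R(ab)$, which is exactly $R(a)R(b)$ by Eq.(\ref{eq:2.1}); for Eq.(\ref{eq:2.4}) one computes $S(a)S(b)=(R(a)+\l a)(R(b)+\l b)$, expands, and matches it against $S(R(a)b+aS(b))=R(R(a)b+aS(b))+\l(R(a)b+aS(b))$, using Eq.(\ref{eq:2.1}) again to collapse the $R$-terms. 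The case $(H,R+\l\,id,R)$ is symmetric. I would state this as a short lemma or simply carry it out inline, since it is the only step requiring actual manipulation.

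Finally, I would assemble the four cases: each listed quintuple pairs one of the two admissible systems $\{(R,R+\l\,id),(R+\l\,id,R)\}$ with one of the two admissible cosystems $\{(Q,Q+\g\,id),(Q+\g\,id,Q)\}$, and since the system and cosystem conditions are independent, any such combination is a Rota-Baxter bisystem by Definition \ref{de:3.11}. The only real obstacle is the verification of the system identities Eq.(\ref{eq:2.3})--(\ref{eq:2.4}) for $S=R+\l\,id$, and this is routine; I would expect the main subtlety to be keeping track of the weight-$\l$ terms when expanding $S(a)S(b)$, ensuring the extra $\l^2 ab$ and cross terms cancel correctly against the expansion of the right-hand side.
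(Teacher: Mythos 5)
Your proposal is correct and takes essentially the same approach as the paper: the paper's entire proof is to invoke \cite[Lemma 2.2]{Br} (the algebra-side fact that a Rota-Baxter algebra of weight $\l$ yields the Rota-Baxter systems $(H, R, R+\l\, id)$ and $(H, R+\l\, id, R)$) together with Proposition \ref{pro:3.3} for the coalgebra side, and then combine the two independent halves exactly as you do. The only difference is that you re-derive the algebra-side lemma by direct computation rather than citing it, and that computation checks out.
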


\begin{proof} It is obvious by \cite[Lemma 2.2]{Br} and Proposition \ref{pro:3.3}. \end{proof}

\begin{pro}\label{pro:3.13}
 Let $H$ be a bialgebra, $R: H\lr H$ a left $H$-linear map, $S: H\lr H$  a right $H$-linear map, $Q: H\lr H$ a left $H$-colinear map and  $T: H\lr H$ a right $H$-colinear map. Then $(H, R, S, Q, T)$ is a Rota-Baxter bisystem if and only if, for all $x, y\in H$,
 $$
 a(R\ci S)(b)=0=(S\ci R)(a)b
 $$
 and
 $$
 x_1\o (T\ci Q)(x_2)=(Q\ci T)(x_1)\o x_2=0.
 $$
 In particular, if $H$ is non-degenerate as algebra and as coalgebra, respectively, then $(H, R, S, Q, T)$ is a Rota-Baxter bisystem (with $R$ left and $S$ right $H$-linear, $Q$ left and $T$ right $H$-colinear) if and only if $R$ and $S$, $Q$ and $T$ satisfy the orthogonality condition, respectively,
 $$
 R\ci S=S\ci R=0,~~~~~T\ci Q=Q\ci T=0.
 $$
\end{pro}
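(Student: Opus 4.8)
The plan is to observe that, by Definition \mref{de:3.11}, the quintuple $(H,R,S,Q,T)$ is a Rota-Baxter bisystem precisely when $(H,R,S)$ is a Rota-Baxter system and $(H,Q,T)$ is a Rota-Baxter cosystem, so the two halves of the asserted criterion can be established independently and then merged. The comodule half is already in hand: since $Q$ is left $H$-colinear and $T$ is right $H$-colinear, Proposition \mref{pro:3.4}, applied to the underlying coalgebra of $H$, says that $(H,Q,T)$ is a Rota-Baxter cosystem if and only if $x_1\o(T\ci Q)(x_2)=(Q\ci T)(x_1)\o x_2=0$ for all $x\in H$, and that under coalgebra non-degeneracy this sharpens to $T\ci Q=Q\ci T=0$. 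Thus the whole task reduces to the algebra counterpart, which is the module-theoretic mirror of Proposition \mref{pro:3.4} (and the content of \cite[Lemma 2.2]{Br}).

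For this module half I would use that $R$ left $H$-linear means $R(ab)=aR(b)$ and $S$ right $H$-linear means $S(ab)=S(a)b$ for all $a,b\in H$. First expand the right-hand side of Eq.(\mref{eq:2.3}): by additivity and these two linearity properties, $R(R(a)b+aS(b))=R(R(a)b)+R(aS(b))=R(a)R(b)+a(R\ci S)(b)$. Comparing with the left-hand side $R(a)R(b)$ shows that Eq.(\mref{eq:2.3}) holds for all $a,b$ if and only if $a(R\ci S)(b)=0$ for all $a,b\in H$. Symmetrically, expanding the right-hand side of Eq.(\mref{eq:2.4}) gives $S(R(a)b+aS(b))=S(R(a)b)+S(aS(b))=(S\ci R)(a)b+S(a)S(b)$, so that Eq.(\mref{eq:2.4}) is equivalent to $(S\ci R)(a)b=0$ for all $a,b\in H$. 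Together these produce exactly the stated pair $a(R\ci S)(b)=0=(S\ci R)(a)b$.

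Finally I would assemble the two halves and dispose of the non-degenerate case. Conjoining the comodule equivalence (Proposition \mref{pro:3.4}) with the module equivalence just derived yields the first claim verbatim. For the second, coalgebra non-degeneracy converts $x_1\o(T\ci Q)(x_2)=(Q\ci T)(x_1)\o x_2=0$ into $T\ci Q=Q\ci T=0$; dually, reading non-degeneracy of $H$ as an algebra as the condition that $af(b)=0$ for all $a,b$ (respectively $f(a)b=0$ for all $a,b$) forces $f=0$, the relations $a(R\ci S)(b)=0$ and $(S\ci R)(a)b=0$ give $R\ci S=0$ and $S\ci R=0$. I expect no genuine obstacle: once the linearity conventions are fixed the computation is pure bookkeeping. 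The one point needing care is that \emph{non-degenerate as an algebra} is not made explicit in Section \mref{sec:Prelim}, so it must be interpreted as the algebra-side dual of the coalgebra non-degeneracy defined there; and matching the left/right module actions correctly to $R$ and $S$ (hence to the surviving terms $a(R\ci S)(b)$ versus $(S\ci R)(a)b$) is the lone spot where a side-error would propagate.
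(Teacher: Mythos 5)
Your proposal is correct and follows exactly the paper's route: the paper's proof of Proposition \ref{pro:3.13} is a one-line appeal to \cite[Lemma 2.3]{Br} for the Rota-Baxter system half and to Proposition \ref{pro:3.4} for the cosystem half, which is precisely your decomposition, with your explicit expansion of $R(R(a)b+aS(b))$ and $S(R(a)b+aS(b))$ simply unpacking the cited lemma of Brzezi\'{n}ski. Your closing caveats (that algebra-side non-degeneracy must be read as the dual of the coalgebra notion from Section \ref{sec:Prelim}, and that the left/right conventions for $R$ and $S$ determine which composite survives in which slot) are accurate and handled correctly.
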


\begin{proof} It is obvious by \cite[Lemma 2.3]{Br} and Proposition \ref{pro:3.4}. \end{proof}

\begin{ex}\label{ex:3.14}
 Let $C$ be a bialgebra and $H$ a Hopf algebra with the antipode $S$. Suppose that there are two bialgebra maps: $i: H\lr C$ and $\pi: C\lr H$ such that $\pi\circ i=id_{H}$, i.e., $C$ is a bialgebra with a projection (see \cite{Ra12}). Set $\Pi=id_{C}\star(i\circ S\circ \pi)$, where $\star$ is the convolution product on End($C$). The right $H$-Hopf module structure is given by the following:
 $$
 c\cdot h=ci(h),
 $$
 $$
 \rho_{R}(c)=c_1\o \pi(c_2),
 $$
 for all $c\in C$ and $h\in H$. Then $(C, \Pi, \Pi)$ is a Rota-Baxter bialgebra of weight $(-1, -1)$, and $(C, \Pi, \Pi-id, \Pi, \Pi-id)$, $(C, \Pi-id, \Pi, \Pi, \Pi-id)$, $(C, \Pi, \Pi-id, \Pi-id, \Pi)$, $(C, \Pi-id, \Pi, \Pi-id, \Pi)$ are Rota-Baxter bisystems.
\end{ex}

\begin{proof} It is a direct consequence by \cite[Example 5.3]{ML} and Theorem \ref{thm:3.12}.\end{proof}

\begin{ex}\label{ex:3.15}
 Let $H$ be a bialgebra, $f: H\lr H$ a multiplicative map, $g: H\lr H$ a comultiplicative map and  $R: C\lr C$ a $K$-linear map such that for all $x, y\in H$,
 $$
 R(x)R(y)=R(R(x)y+xR^f(y))
 $$
 where $R^f=R\ci f$ and $Q: H\lr H$ a $K$-linear map such that for all $x\in H$, (3.3) holds. Then $(H, R, R^f, Q, Q^g)$ is a Rota-Baxter bisystem.
\end{ex}

\begin{proof} It is a direct consequence by \cite[Lemma 4.2]{Br} and Example \ref{ex:3.2}.\end{proof}

\begin{ex}\label{ex:3.16}
 Let $\{x, y, z\}$ be a basis of a 3-dimensional vector space $H$ over $K$. We define multiplication and comultiplication over $H$:
 $$
 x^{2}=x,~~ y^{2}=y,~~ z^{2}=0
 $$
 $$
 xy=yx=y, ~~yz=z,
 $$
 $$
 xz=zx=z, ~~zy=0,
 $$
 $$
 \D(x)=x\o x, ~~\D(y)=y\o y, ~~\D(z)=z\o z.
 $$
 It is not hard to see that $H$ is a bialgebra with the above constructions. Let $R, S, Q, T$ be operators defined with respect to the basis $\{x, y, z\}$ by
 $$\begin{array}{ccc}
   R(x)=\l_1 z, & R(y)=\l_2  z,  & R(z)=0,   \\
   S(x)=-\l_3  y, & S(y)=0,   & S(z)=0,    \\
   Q(x)=\l_4  z, & Q(y)=\l_4 z,  & Q(z)=0,  \\
   T(x)=\l_4(x+z), & T(y)=\l_4(y+z),  & T(z)=\l_4 z,
 \end{array}
 $$
 where $\l_i\in K, i=1,2,3,4$. Then $(H, R, S, Q, T)$ is a Rota-Baxter bisystem.
\end{ex}

\section{Covariant bialgebras} \label{sec:covarbialg}
 In this section, we redefine  covariant bialgebras via coderivations. Moreover, we consider  coassociative Yang-Baxter pairs and coassociative Yang-Baxter equation.

\begin{defi}\label{de:4.1}
 {\rm Let $C$ be a coassociative coalgebra. A coassociative Yang-Baxter pair (abbr. CYBP) on $C$ is a pair of elements $\s, \tau\in (C\o C)^*$ that satisfy the following equations
 \begin{eqnarray}
 &&\s(c_1, e)\s(c_2, d)-\s(c, d_1)\s(d_2, e)+\tau(d, e_1)\s(c, e_2)=0, \label{eq:4.1}\\
 &&\tau(c_1, e)\s(c_2, d)-\tau(c, d_1)\tau(d_2, e)+\tau(d, e_1)\tau(c, e_2)=0, \label{eq:4.2}
 \end{eqnarray}
 where $\forall~c, d, e\in C$.}
\end{defi}

\begin{pro}\label{pro:4.2}
 Let $f: C\lr D$ be a coalgebra map. If $(\s, \tau)$ is a CYBP on $D$, then
 \begin{equation}\label{eq:4.3}
 \s^f=\s\ci (f\o f)~~~\hbox{and}~~~\tau^f=\tau\ci (f\o f)
 \end{equation}
 form a CYBP on $C$.
\end{pro}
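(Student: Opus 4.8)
The plan is to verify the two defining equations \eqref{eq:4.1} and \eqref{eq:4.2} for the pair $(\s^f, \tau^f)$ directly, reducing each of them to the corresponding equation for $(\s, \tau)$ on $D$. The only structural input needed is that $f$ is a coalgebra map, which in Sweedler notation reads $f(c)_1\o f(c)_2=f(c_1)\o f(c_2)$ for all $c\in C$; everything else is a transport of the $D$-relations along $f$.

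First I would fix $c, d, e\in C$ and expand the left-hand side of \eqref{eq:4.1} written for $(\s^f, \tau^f)$. Each evaluation unfolds by definition as $\s^f(a,b)=\s(f(a),f(b))$ and $\tau^f(a,b)=\tau(f(a),f(b))$. The three summands carrying a comultiplication, namely $\s^f(c_1,e)\s^f(c_2,d)$, $\s^f(c,d_1)\s^f(d_2,e)$ and $\tau^f(d,e_1)\s^f(c,e_2)$, then produce expressions such as $\s(f(c_1),f(e))\s(f(c_2),f(d))$. Using the coalgebra-map identity I would replace $f(c_1)\o f(c_2)$ by $f(c)_1\o f(c)_2$ in the first summand, $f(d_1)\o f(d_2)$ by $f(d)_1\o f(d)_2$ in the second, and $f(e_1)\o f(e_2)$ by $f(e)_1\o f(e)_2$ in the third.

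After this substitution, setting $c'=f(c)$, $d'=f(d)$ and $e'=f(e)$ in $D$, the left-hand side of \eqref{eq:4.1} for $(\s^f,\tau^f)$ becomes exactly $\s(c'_1,e')\s(c'_2,d')-\s(c',d'_1)\s(d'_2,e')+\tau(d',e'_1)\s(c',e'_2)$, which vanishes by \eqref{eq:4.1} applied to the elements $c', d', e'$ of $D$. The verification of \eqref{eq:4.2} for $(\s^f,\tau^f)$ is entirely parallel, invoking \eqref{eq:4.2} on $D$ for the same images $c',d',e'$. Since the definition of a CYBP imposes no further conditions, this establishes that $(\s^f,\tau^f)$ is a CYBP on $C$.

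I do not expect a genuine obstacle here; the only point to watch is the correct bookkeeping of the Sweedler indices, that is, applying the coalgebra-map property to the variable that is actually comultiplied in each summand ($c$ in the first product, $d$ in the second, $e$ in the third) rather than conflating them. Once the indices are tracked carefully the proof is a direct translation of the $D$-relations along $f$, and no non-degeneracy or counitality assumption on $C$ or $D$ is required.
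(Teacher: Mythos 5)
Your proposal is correct and matches the paper's own proof: both unfold $\s^f,\tau^f$ by definition, use the coalgebra-map identity $f(c_1)\o f(c_2)=f(c)_1\o f(c)_2$ in each comultiplied slot, and then invoke Eqs.~(\ref{eq:4.1})--(\ref{eq:4.2}) on $D$ for the images $f(c), f(d), f(e)$, treating the second equation as entirely parallel. Your extra remarks on Sweedler-index bookkeeping and on not needing counitality are accurate but add nothing beyond the paper's argument.
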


\begin{proof}
 We compute Eq.(\ref{eq:4.1}) for $(\s^f, \tau^f)$ as follows.
 \begin{eqnarray*}
 &&\s^f(c_1, e)\s^f(c_2, d)-\s^f(c, d_1)\s^f(d_2, e)+\tau^f(d, e_1)\s^f(c, e_2)\\
 &&\stackrel{}{=}\s(f(c)_1, f(e))\s(f(c)_2, f(d))-\s(f(c), f(d)_1)\s(f(d)_2, f(e))\\
 &&+\tau(f(d), f(e)_1)\s(f(c), f(e)_2)~~\hbox{by~}f~\hbox{is coalgebra map}\\
 &&\stackrel{(\ref{eq:4.1})}{=}0.
 \end{eqnarray*}
 The proof for Eq.(\ref{eq:4.2}) is similar. \end{proof}

\begin{ex}\label{ex:4.3}
 (1) The pair $(\s, \s)$ is a CYBP if and only if $\s$ is a solution of the coassociative Yang-Baxter equation
 $$
 \s(c_1, e)\s(c_2, d)-\s(c, d_1)\s(d_2, e)+\s(d, e_1)\s(c, e_2)=0.
 $$

 (2) If for all $c, d, e\in C$, $\s\in (C\o C)^*$ satisfies
 \begin{equation}\label{eq:4.4}
 \s(c, d_1)\s(d_2, e)=\s(d, e_1)\s(c, e_2)=\s(c_1, e)\s(c_2, d),
 \end{equation}
 the dual case of Frobenius-separability equation in \cite{CMZ}, then $(\s, 0)$ and $(0, \s)$ are CYBPs.

 (3) If for all $c, d, e\in C$, $\s, \tau\in (C\o C)^*$ satisfies (4.4), such that
 \begin{equation}\label{eq:4.5}
 \tau(d, e_1)\s(c, e_2)=\tau(c_1, e)\s(c_2, d)=0,
 \end{equation}
 then $(\s, \tau)$ is a CYBP.

 (4) If for all $c, d, e\in C$, $\s, \tau\in (C\o C)^*$ satisfies
 \begin{equation}\label{eq:4.6}
 \s(c_1, e)\s(c_2, d)=\s(c, d_1)\s(d_2, e),~~\tau(c, d_1)\tau(d_2, e)=\tau(d, e_1)\tau(c, e_2),
 \end{equation}
 then $(\s, 0)$ and $(0, \tau)$ are CYBPs. If in addition, $\s, \tau$ satisfy (4.5), then $(\s, \tau)$ is a CYBP.

 (5) Let $C$ be a coalgebra. Suppose that $\xi, \zeta\in C^*$ such that for all $c\in C$, $\xi(c_1)\xi(c_2)=0$ and $\xi(c_1)\zeta(c_2)=\zeta(c_1)\xi(c_2)=0$, then $(\s=\xi\o \zeta, \tau=\zeta\o \xi)$ is a CYBP.
\end{ex}

\begin{pro}\label{pro:4.4}
 Let $(\s, \tau)$ be a CYBP on $C$. Define $Q, T: C\lr C$ by
 \begin{equation}\label{eq:4.7}
 Q(c)=\s(c_1, c_3)c_2,~~~T(c)=\tau(c_1, c_3)c_2.
 \end{equation}
 Then $(C, Q, T)$ is a Rota-Baxter cosystem associated to $(\s, \tau)$.

 Furthermore, if $f: C\lr D$ is a coalgebra map and $(\s^f, \tau^f)$ is the CYBP induced by $f$ as in Eq.(\ref{eq:4.3}), then $f$ is a morphism of Rota-Baxter cosystems from the cosystem associated to $(\s^f, \tau^f)$ to the cosystem associated to $(\s, \tau)$.
\end{pro}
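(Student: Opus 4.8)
The plan is to verify the two defining equations of a Rota-Baxter cosystem, namely Eq.(\ref{eq:3.1}) and Eq.(\ref{eq:3.2}), directly from the definition of $Q$ and $T$ in Eq.(\ref{eq:4.7}) and the CYBP axioms Eq.(\ref{eq:4.1}) and Eq.(\ref{eq:4.2}). The essential bookkeeping tool is coassociativity: applying $Q$ or $T$ to a component and then expanding its coproduct via Sweedler notation should, after relabelling, produce longer strings of tensor legs that match the terms of the CYBP identities. So first I would compute the left-hand side of Eq.(\ref{eq:3.1}), $Q(c_1)\o Q(c_2)$, using coassociativity to write $c=c_1\o c_2\o c_3\o c_4\o\cdots$ with enough legs; since $Q(c)=\s(c_1,c_3)c_2$, the product $Q(c_1)\o Q(c_2)$ becomes $\s(c_1,c_3)\s(c_4,c_6)\,c_2\o c_5$ after expanding both coproducts.

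Next I would separately expand the two terms on the right-hand side of Eq.(\ref{eq:3.1}), i.e. $Q(Q(c)_1)\o Q(c)_2$ and $Q(c)_1\o T(Q(c)_2)$. The key step here is to carefully track how $Q(c)_1$ and $Q(c)_2$ arise: since $Q(c)=\s(c_1,c_3)c_2$, one has $\D(Q(c))=\s(c_1,c_4)\,c_2\o c_3$, and applying $Q$ or $T$ to one of these slots and then its coproduct again generates another scalar factor. Matching the resulting Sweedler strings against the LHS reduces the required identity to exactly the shape of Eq.(\ref{eq:4.1}) (with $c,d,e$ instantiated as appropriate consecutive Sweedler components of $c$). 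The verification of Eq.(\ref{eq:3.2}) proceeds in the same fashion, now with the roles arranged so that the CYBP identity Eq.(\ref{eq:4.2}) is what closes the computation. I expect no genuine obstacle beyond keeping the index bookkeeping correct; the main care is ensuring the scalar arguments $(\s \text{ and } \tau)$ are fed the right consecutive legs so that the three terms of Eq.(\ref{eq:4.1})–(\ref{eq:4.2}) appear with the correct signs and slot assignments.

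For the functoriality claim, I would use Proposition \ref{pro:4.2} to know that $(\s^f,\tau^f)$ is indeed a CYBP on $C$, so by the first part $(C,Q^f,T^f)$ is a Rota-Baxter cosystem, where $Q^f(c)=\s^f(c_1,c_3)c_2$ and $T^f(c)=\tau^f(c_1,c_3)c_2$. It then remains to check the two intertwining conditions $f\ci Q^f=Q_D\ci f$ and $f\ci T^f=T_D\ci f$ from Definition \ref{de:3.1}. The hard part, insofar as there is one, is simply unwinding definitions: using that $f$ is a coalgebra map, $\D_D(f(c))=f(c_1)\o f(c_2)$, so $Q_D(f(c))=\s(f(c)_1,f(c)_3)f(c)_2=\s(f(c_1),f(c_3))f(c_2)=\s^f(c_1,c_3)f(c_2)=f(\s^f(c_1,c_3)c_2)=f(Q^f(c))$, using linearity of $f$ in the last step, and identically for $T$. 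This establishes that $f$ is a morphism of Rota-Baxter cosystems in the required direction.
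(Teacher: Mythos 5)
Your proposal is correct and follows essentially the same route as the paper: expand $Q(c_1)\o Q(c_2)$ (resp.\ $T(c_1)\o T(c_2)$) into six Sweedler legs, apply Eq.(\ref{eq:4.1}) (resp.\ Eq.(\ref{eq:4.2})) to the scalar factor with $c,d,e$ instantiated as the first, third and fifth legs, and relabel by coassociativity to recover the two terms on the right of Eq.(\ref{eq:3.1}) (resp.\ Eq.(\ref{eq:3.2})). The paper only writes out the Eq.(\ref{eq:3.1}) computation (with two typographical $\s$'s that should be $\tau$'s) and leaves the rest implicit, whereas you additionally spell out the one-line intertwining check $Q_D\ci f=f\ci Q^f$ for the functoriality claim, which is a correct and welcome completion rather than a different method.
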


\begin{proof}
 We only check that Eq.(\ref{eq:3.1}) holds as follows. For all $c\in C$, we have
 \begin{eqnarray*}
 \hbox{LHS of Eq.(\ref{eq:3.1})}&\stackrel{}{=}&\s(c_{1}, c_{31})c_{2}\o \s(c_{32}, c_{5})c_{4}\\
 &\stackrel{(\ref{eq:4.1})}{=}&\s(c_{11}, c_{5})\s(c_{12}, c_{3})c_{2}\o c_{4}+\s(c_{3}, c_{51})\s(c_{1}, c_{52})c_{2}\o c_{4}\\
 &\stackrel{ }{=}&\s(c_{1}, c_{3})\s(c_{211}, c_{213})c_{212}\o c_{22}+\s(c_{1}, c_{3})\s(c_{221}, c_{223})c_{21}\o c_{222}\\
 &\stackrel{ }{=}&\hbox{RHS of Eq.(\ref{eq:3.1})},
 \end{eqnarray*}
 finishing the proof. \end{proof}

\begin{rmk}\label{rmk:4.5}
 By \cite[Proposition 3.4]{Br} and Proposition \ref{pro:4.4}, we  obtain:\\ Let $H$ be a bialgebra. Suppose that $(r, s)$ is an associative Yang-Baxter pair on $H$ (see \cite{Br}) and $(\s, \tau)$ is a CYBP on $H$, then $(H, R, S, Q, T)$ is a Rota-Baxter bisystem.
\end{rmk}

\begin{ex}\label{ex:4.6}
 (1) The Rota-Baxter cosystem associated to $(\s, \tau)$ satisfying Eqs.(\ref{eq:4.5}) and (\ref{eq:4.6}) in Example \ref{ex:4.3} (4) will satisfy the separated equations
 $$
 Q(c_1)\o Q(c_2)=Q(Q(c)_1)\o Q(c)_2,~T(c_1)\o T(c_2)=T(c)_1\o T(T(c)_2),
 $$
 and
 $$
 Q(c)_1\o T(Q(c)_2)=Q(T(c)_1)\o T(c)_2=0.
 $$
 For example,
 $$
 Q(c)_1\o T(Q(c)_2)=\s(c_1, c_3)\tau(c_{221}, c_{223})c_{21}\o T(c_{222})\stackrel{(\ref{eq:4.5})}{=}0
 $$

 (2) The Rota-Baxter cosystem associated to $(\s, \tau)$ in Example \ref{ex:4.3} (5) is $(C, Q, T)$, where $Q(c)=\xi(c_1)c_2\zeta(c_3)$ and $T(c)=\zeta(c_1)c_2\xi(c_3)$.
\end{ex}

\begin{defi}\label{de:4.7}
 {\rm Let $C$ be a coassociative coalgebra and $\d_1, \d_2: C\o C\lr C$ (write $\d_{i}(c\o d)=c\di_i d, i=1, 2$) be coderivations (i.e. $(c\di_{i} d)_1\o (c\di_{i} d)_2=c_1\o c_2\di_{i} d+c\di_{i} d_1\o d_2,  i=1, 2)$ (see \cite{Do}).

 (1) If $M$ is a right $C$-comodule, then a $K$-linear map $\nabla_r: M\o C\lr M$ (write $\nabla_r(m\o c)=m\tr c$) is called a {\bf right covariant coderivation with respect to $\d_1$} if
 \begin{equation}\label{eq:4.8}
 (m\tr c)_{(0)}\o (m\tr c)_{(1)}=m\tr c_1\o c_2+m_{(0)}\o m_{(1)}\di_1 c
 \end{equation}
 for all $m\in M$ and $c\in C$.

 (2) If $M$ is a left $C$-comodule, then a $K$-linear map $\nabla_l: C\o M\lr M$ (write $\nabla_r(c\o m)=c\tl m$) is called a {\bf left covariant coderivation with respect to $\d_2$} if
 \begin{equation}\label{eq:4.9}
 (c\tl m)_{(-1)}\o (c\tl m)_{(0)}=c_1\o c_2\tl m +c\di_2 m_{(-1)}\o  m_{(0)}
 \end{equation}
 for all $m\in M$ and $c\in C$.

 (3) A $K$-linear map $\nabla: C\o C\lr C$ (write $\nabla(c\o d)=c\cdot d$) is called a {\bf covariant coderivation with respect to $(\d_1, \d_2)$} if it is a right covariant coderivation with respect to $\d_1$ and left covariant coderivation with respect to $\d_2$, i.e.
 \begin{equation}\label{eq:4.10}
 (c\cdot d)_{1}\o (c\cdot d)_{2}=c\cdot d_1\o d_2+c_{1}\o c_{2}\di_1 d=c_1\o c_2\cdot d +c\di_2 d_1\o  d_{2}
 \end{equation}
 for all $m\in M$ and $c\in C$.} \end{defi}

\begin{rmk}\label{rmk:4.8}
 (1) Obviously, any coderivation $\d: C\o C\lr C$ is a covariant coderivation with respect to $(\d, \d)$.

 (2) A covariant coderivation with respect to $(0, 0)$ is the same as a $C$-bicomodule map $C\o C\lr C$, where $\r_r(c\o d)=c\o d_1\o d_2$ and $\r_l(c\o d)=c_1\o c_2\o d$.
\end{rmk}

 In the following,  we introduce a new algebraic structure using  coderivations and covariant coderivations, close to covariant bialgebra which we still call covariant bialgebra.

\begin{defi}\label{de:4.9}
 {\rm  A {\bf covariant bialgebra} is a quadruple $(C, \d_1, \d_2, \mu)$, such that

 (1) $C$ is a coassociative coalgebra,

 (2) $\d_1, \d_2:C\o C\lr C$ are coderivations,

 (3) $(C, \mu)$ is an associative algebra such that $\mu$ is a covariant coderivation with respect to $(\d_1, \d_2)$.

 If $C$ has a counit, then a covariant bialgebra $(C, \d_1, \d_2, \mu)$ is said to be {\bf counital} if moreover we have  $\v(cd)=\v(c)\v(d)$, here $cd=\mu(c\o d)$.

 A {\bf morphism of covariant bialgebras} is a $K$-linear map that is both an algebra and a coalgebra map.}
\end{defi}

\begin{ex}\label{ex:4.10}
 $(C, \mu, \mu, \mu)$ is a covariant bialgebra if and only if $(c, \D)$ is an infinitesimal bialgebra \cite{Ag}, i.e. an algebra equipped with a coassociative comultiplication which is also a derivation, because infinitesimal bialgebras are  self-dual.
\end{ex}

\begin{thm}\label{thm:4.11}
 Let $C$ be a counital coalgebra, and let $\d_1, \d_2:C\o C\lr C$ be coderivations. Then there exists an associative covariant coderivation $\mu: C\o C\lr C$ (write $\mu(c\o d)=cd$) with respect to $(\d_1, \d_2)$ if and only if there exists $u \in (C\o C)^*$ such that, for all $c, d, e\in C$
 \begin{equation}\label{eq:4.11}
 c\di_1 d-c\di_2 d=c_1u(c_2, d)-u(c, d_1)d_2,
 \end{equation}
 \begin{equation}\label{eq:4.12}
 (c\di_1 d)\di_1 e-c\di_1 (d\di_1 e)=u(d, e_1)c\di_1 e_2,
 \end{equation}
 \begin{equation}\label{eq:4.13}
 u(c\di_1 d, e)-u(c, d\di_1 e)=u(d, e_1)u(c, e_2)-u(c, d_1)u(d_2, e).
 \end{equation}
 In this case,
 \begin{equation}\label{eq:4.14}
 cd=u(c, d_1)d_2+c\di_1 d=c_1u(c_2, d)+c\di_2 d.
 \end{equation}
\end{thm}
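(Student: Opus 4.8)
The plan is to prove the two implications by exploiting the counit. In the forward direction the correct choice is $u=\v\ci\mu$, and in both directions the counit is used to separate the associativity constraint into a scalar part, giving (\ref{eq:4.13}), and a coderivation part, giving (\ref{eq:4.12}). I would first record the elementary fact used throughout, that the counit annihilates every coderivation: applying $\v\o\v$ to the defining identity $(c\di_i d)_1\o(c\di_i d)_2=c_1\o c_2\di_i d+c\di_i d_1\o d_2$ gives $\v(c\di_i d)=2\,\v(c\di_i d)$, whence $\v\ci\d_i=0$. This kills every term of the form $\v(\,\cdot\di_1\cdot\,)$ appearing below.

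For $(\Rightarrow)$, assume $\mu$ is an associative covariant coderivation and set $u(c,d)=\v(cd)$. Applying $\v\o id$ to the first equality of (\ref{eq:4.10}) and $id\o\v$ to the second, and using the counit axiom to collapse both left-hand sides to $cd$, yields at once the two formulas $cd=u(c,d_1)d_2+c\di_1 d=c_1u(c_2,d)+c\di_2 d$ of (\ref{eq:4.14}); their difference is (\ref{eq:4.11}). Next I would expand $(cd)e$ and $c(de)$ purely in terms of $u$ and $\di_1$, using the first formula of (\ref{eq:4.14}) together with the coderivation rule for $\di_1$ to compute coproducts of products. Both expansions share the term $u(c,d_1)(d_2\di_1 e)$, so associativity $(cd)e=c(de)$ reduces to a single identity, say $(\star)$, in $C$. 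Applying $\v$ to $(\star)$ and invoking $\v\ci\di_1=0$, which in particular annihilates $u(d,e_1)\v(c\di_1 e_2)$ and the two doubly nested $\di_1$ terms, leaves precisely (\ref{eq:4.13}). Finally, substituting (\ref{eq:4.13}), with $e$ replaced by $e_1$ and the identity multiplied on the right by $e_2$, back into $(\star)$ cancels every scalar-valued summand and isolates (\ref{eq:4.12}).

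For $(\Leftarrow)$, given $u$ satisfying (\ref{eq:4.11})--(\ref{eq:4.13}) I would \emph{define} $\mu$ by $cd=u(c,d_1)d_2+c\di_1 d$. Equation (\ref{eq:4.11}) shows this equals $c_1u(c_2,d)+c\di_2 d$, so (\ref{eq:4.14}) holds in both forms. Computing $\D(cd)$ from the first form of (\ref{eq:4.14}), using the coderivation rule for $\di_1$ and coassociativity, matches $c\cdot d_1\o d_2+c_1\o c_2\di_1 d$, and computing it from the second form matches $c_1\o c_2\cdot d+c\di_2 d_1\o d_2$ (in each case one expands the inner product by (\ref{eq:4.14})); hence $\mu$ is a covariant coderivation with respect to $(\d_1,\d_2)$. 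Associativity is then obtained by running the two expansions of $(cd)e$ and $c(de)$ from the forward direction in reverse: (\ref{eq:4.12}) cancels the pure $\di_1$ terms and (\ref{eq:4.13}) cancels the scalar terms, leaving $(cd)e=c(de)$.

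The routine part is the Sweedler bookkeeping. The one genuinely delicate point is the decomposition of $(\star)$ into (\ref{eq:4.12}) and (\ref{eq:4.13}): the mechanism is that $\v$ projects $(\star)$ onto its scalar component, yielding (\ref{eq:4.13}), after which (\ref{eq:4.13}) is re-inserted to strip off that component and expose the purely coalgebraic component (\ref{eq:4.12}). Keeping track of which Sweedler leg each factor of $u$ consumes, and in particular resolving the triple-coproduct term $u(d,e_1)u(c,e_2)e_3$ through (\ref{eq:4.13}), is where attention is needed.
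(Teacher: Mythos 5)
Your proof is correct and takes essentially the same route as the paper: set $u=\v\ci\mu$, obtain (\ref{eq:4.14}) and hence (\ref{eq:4.11}) by applying $\v\o id$ and $id\o\v$ to (\ref{eq:4.10}), extract (\ref{eq:4.13}) by applying $\v$ to the expanded associativity identity, re-insert (\ref{eq:4.13}) (with $e$ replaced by $e_1$, tensored against $e_2$) to isolate (\ref{eq:4.12}), and reverse the whole computation for the converse after defining $\mu$ by (\ref{eq:4.14}). The only cosmetic difference is that you justify the scalar projection via the (correct) observation $\v\ci\d_i=0$, whereas the paper evaluates $\v$ on products directly through the definition $u=\v\ci\mu$.
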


\begin{proof}
 ($\Longrightarrow$) Set $u=\v\ci \mu\in (C\o C)^*$. Apply $\v\o id$ and $id\o \v$ to Eq.(\ref{eq:4.10}) for $\mu$, respectively, we  get Eq.(\ref{eq:4.14}),  then Eq.(\ref{eq:4.11}) holds.

 By the associativity of $\mu$, we have
 \begin{equation}\label{eq:4.15}
 \mu \ci (\mu\o id)=\mu \ci (id\o \mu).
 \end{equation}
 Then by Eq.(\ref{eq:4.14}),
 $$
 \mu(u(c, d_1)d_2\o e+c\di_1 d\o e)=\mu(c\o u(d, e_1)e_2+c\o d\di_1 e).
 $$
 Apply $\v$ to both sides of above equality and by the definition of $u$, one  obtain Eq.(\ref{eq:4.13}).

 Since
 $$
 \hbox{LHS of Eq.(\ref{eq:4.15})}\stackrel{(\ref{eq:4.14})}{=}u(c, d_1)u(d_2, e_1)e_2+u(c, d_1)d_2\di_1 e+u(c\di_1 d, e_1)e_2+(c\di_1 d)\di_1 e,
 $$
 and
 $$
 \hbox{RHS of Eq.(\ref{eq:4.15})}\stackrel{(\ref{eq:4.14})}{=}u(d, e_1)u(c, e_{21})e_{22}+u(d, e_1)c\di_1 e_2+u(c, (d\di_1 e)_1)(d\di_1 e)_2+c\di_1 (d\di_1 e),
 $$
 \begin{eqnarray*}
 (c\di_1 d)\di_1 e-c\di_1 (d\di_1 e)
 &\stackrel{}{=}&((u(c, d\di_1 e_1)-u(c\di_1 d, e_1))-(u(c, d_1)u(d_2, e_1)\\
 &&-u(d, e_{11})u(c, e_{12}))e_2+u(d, e_1)c\di_1 e_2~~ (\hbox{by coderivation})\\
 &\stackrel{(\ref{eq:4.13})}{=}&u(d, e_1)c\di_1 e_2,
 \end{eqnarray*}
 so Eq.(\ref{eq:4.12}) is proved.

 ($\Longleftarrow$) By Eq.(\ref{eq:4.14}), we have
 \begin{eqnarray*}
 (cd)_1\o (cd)_2
 &\stackrel{}{=}&u(c, d_1)d_2\o d_3+(c\di_1 d)_1\o (c\di_1 d)_2\\
 &=&u(c, d_1)d_2\o d_3+c_1\o c_2\di_{1} d+c\di_{1} d_1\o d_2~~ (\hbox{by coderivation})\\
 &=&cd_1\o d_2+c_1\o c_2\di_{1} d.
 \end{eqnarray*}
 Similarly, we  get $(cd)_1\o (cd)_2=c_1\o c_2d+c\di_{2} d_1\o d_2$. The rest is obvious.    \end{proof}

\begin{cor}\label{cor:4.12}
 A quadruple $(C, \d_1, \d_2, \mu)$ consisting of a counital coalgebra $C$, coderivations $\d_1, \d_2: C\o C\lr C$ and a $K$-linear map $\mu: C\o C\lr C$ is a counital covariant bialgebra if and only if, for all $c, d, e\in C$,
 \begin{equation}\label{eq:4.16}
 c\di_1 d-c\di_2 d=c\v(d)-\v(c)d,
 \end{equation}
 \begin{equation}\label{eq:4.17}
 (c\di_1 d)\di_1 e-c\di_1 (d\di_1 e)=\v(d)c\di_1 e.
 \end{equation}
 In this case,
 \begin{equation}\label{eq:4.18}
 cd=\v(c)d+c\di_1 d.
 \end{equation}
\end{cor}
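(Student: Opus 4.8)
The plan is to deduce this directly from Theorem \ref{thm:4.11} by specializing the auxiliary functional $u\in (C\o C)^*$ to the product $\v\o\v$, which is exactly what counitality of $\mu$ enforces. First I would record a preliminary fact about coderivations into a counital coalgebra: if $\d_i: C\o C\lr C$ is a coderivation, then $\v\ci\d_i=0$. Indeed, applying $\v\o\v$ to the coderivation identity $(c\di_i d)_1\o (c\di_i d)_2=c_1\o c_2\di_i d+c\di_i d_1\o d_2$ and using the counit axioms on the right-hand side yields $\v(c\di_i d)=2\v(c\di_i d)$, hence $\v(c\di_i d)=0$ for all $c,d$. I would isolate this identity at the outset, since it is what makes Eq.(\ref{eq:4.13}) disappear and the final counitality check immediate.

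For the forward implication, suppose $(C,\d_1,\d_2,\mu)$ is a counital covariant bialgebra. By Theorem \ref{thm:4.11} there is $u=\v\ci\mu$ satisfying Eqs.(\ref{eq:4.11})--(\ref{eq:4.13}), with $\mu$ given by Eq.(\ref{eq:4.14}). The counital condition $\v(cd)=\v(c)\v(d)$ says precisely that $u(c,d)=\v(c)\v(d)$, i.e. $u=\v\o\v$. Substituting this into Eq.(\ref{eq:4.11}) and collapsing $c_1\v(c_2)=c$ and $\v(d_1)d_2=d$ gives Eq.(\ref{eq:4.16}); substituting into Eq.(\ref{eq:4.12}) and collapsing $\v(e_1)e_2=e$ gives Eq.(\ref{eq:4.17}); and substituting into Eq.(\ref{eq:4.14}) gives Eq.(\ref{eq:4.18}). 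Equation (\ref{eq:4.13}) becomes $0=0$ once $u=\v\o\v$ is inserted and $\v\ci\d_1=0$ is used, so it carries no information here.

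For the converse, assume Eqs.(\ref{eq:4.16}) and (\ref{eq:4.17}) hold and set $u=\v\o\v\in(C\o C)^*$. Reversing the substitutions above shows that Eq.(\ref{eq:4.16}) is equivalent to Eq.(\ref{eq:4.11}) and Eq.(\ref{eq:4.17}) to Eq.(\ref{eq:4.12}) for this $u$. The only remaining hypothesis of Theorem \ref{thm:4.11} is Eq.(\ref{eq:4.13}); with $u=\v\o\v$ its right-hand side collapses to $\v(c)\v(d)\v(e)-\v(c)\v(d)\v(e)=0$, while its left-hand side equals $\v(c\di_1 d)\v(e)-\v(c)\v(d\di_1 e)$, which vanishes by the preliminary fact $\v\ci\d_1=0$. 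Hence all three conditions of Theorem \ref{thm:4.11} hold, so the map $\mu$ defined by Eq.(\ref{eq:4.14}), which reduces to Eq.(\ref{eq:4.18}), is an associative covariant coderivation with respect to $(\d_1,\d_2)$, making $(C,\d_1,\d_2,\mu)$ a covariant bialgebra. Finally, counitality follows from Eq.(\ref{eq:4.18}): $\v(cd)=\v(c)\v(d)+\v(c\di_1 d)=\v(c)\v(d)$, again using $\v\ci\d_1=0$.

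The only genuine step beyond bookkeeping is the verification that Eq.(\ref{eq:4.13}) is automatically satisfied in the converse direction, and dually the observation that it is vacuous in the forward direction once $u=\v\o\v$. Both reduce to the identity $\v\ci\d_i=0$, so establishing that lemma first is the whole content of the argument; everything else is a routine application of the counit axioms inside Theorem \ref{thm:4.11}.
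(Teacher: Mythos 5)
Your proof is correct and takes essentially the same route as the paper, whose entire proof is the one-line observation that counitality forces $u=\v\ci\mu=\v\o\v$ in Theorem \ref{thm:4.11}, with "the rest obvious." The preliminary identity $\v\ci\d_i=0$ that you isolate (which makes Eq.(\ref{eq:4.13}) vacuous and gives counitality of $\mu$ in the converse) is precisely the detail the paper leaves implicit, so your write-up is a correct, fully detailed version of the paper's argument.
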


\begin{proof}
 Assume $(C, \d_1, \d_2, \mu)$  to be a counital covariant bialgebra, then $u=\v\o \v$, and the rest is obvious.  \end{proof}

\begin{thm}\label{thm:4.13}
 Let $C$ be a coalgebra and $\s, \tau\in C^*$. Define $K$-linear maps $\d_{\s}, \d_{\tau}, \mu: C\o C\lr C$ by
 \begin{eqnarray}
 \d_{\s}(c\o d)&=&c_1\s(c_2, d)-\s(c, d_1)d_2,  \nonumber\\
 \d_{\tau}(c\o d)&=&c_1\tau(c_2, d)-\tau(c, d_1)d_2, \label{eq:4.19}\\
 \mu(c\o d)&=&c_1\s(c_2, d)-\tau(c, d_1)d_2.  \nonumber
 \end{eqnarray}
 Then $(C, \d_{\s}, \d_{\tau}, \mu)$ is a covariant bialgebra if and only if, for all $c, d, e\in C$,
 \begin{eqnarray}
 &&c_1(\s(c_{21}, e)\s(c_{22}, d)-\s(c_2, d_1)\s(d_2, e)+\tau(d, e_1)\s(c_2, e_2))\nonumber\\
 &&~~=(\tau(c_1, e_1)\s(c_2, d)-\tau(c, d_1)\tau(d_2, e_1)+\tau(d, e_{11})\tau(c, e_{12}))e_2.   \label{eq:4.20}
 \end{eqnarray}
\end{thm}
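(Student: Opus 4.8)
The plan is to read off the three requirements hidden in Definition~\ref{de:4.9}: the quadruple $(C,\d_\s,\d_\tau,\mu)$ is a covariant bialgebra exactly when (i) $\d_\s$ and $\d_\tau$ are coderivations, (ii) $\mu$ is a covariant coderivation with respect to $(\d_\s,\d_\tau)$, that is Eq.(\ref{eq:4.10}) holds with $\d_1=\d_\s$ and $\d_2=\d_\tau$, and (iii) $(C,\mu)$ is associative. I would first show that (i) and (ii) hold for \emph{any} pair $(\s,\tau)$, being forced by coassociativity alone, so that the whole statement collapses to the equivalence between the associativity of $\mu$ and Eq.(\ref{eq:4.20}).

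For (i), applying $\D$ to $\d_\s(c\o d)=c_1\s(c_2,d)-\s(c,d_1)d_2$ and normalizing the nested coproducts by coassociativity gives
\[
\D(\d_\s(c\o d))=c_1\o c_2\,\s(c_3,d)-\s(c,d_1)\,d_2\o d_3 .
\]
Expanding instead $c_1\o\d_\s(c_2\o d)+\d_\s(c\o d_1)\o d_2$ yields these same two terms plus a pair of cross terms $\pm\,\s(c_2,d_1)\,c_1\o d_2$ which cancel; hence $\d_\s$ is a coderivation, and $\d_\tau$ is handled identically. For (ii) the computation is of the same type: expanding $\D(\mu(c\o d))$ and comparing with each of the two right-hand sides of Eq.(\ref{eq:4.10}), the extra cross terms (purely in $\s$ for one equality, purely in $\tau$ for the other) cancel by coassociativity, so Eq.(\ref{eq:4.10}) holds unconditionally.

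The substantive step is (iii). I would expand $(cd)e$ and $c(de)$ directly from $\mu(c\o d)=c_1\s(c_2,d)-\tau(c,d_1)d_2$, applying $\mu$ twice and renormalizing every nested coproduct. After the dust settles the term $-\tau(c,d_1)\,d_2\,\s(d_3,e)$ appears in both $(cd)e$ and $c(de)$ and cancels; regrouping the surviving terms (moving the $\tau$-contributions to the appropriate side) turns the associativity identity $(cd)e=c(de)$ into precisely Eq.(\ref{eq:4.20}). The clarifying observation is that the left-hand side of Eq.(\ref{eq:4.20}) is $c_1$ times the expression of Eq.(\ref{eq:4.1}) evaluated at $(c_2,d,e)$, while the right-hand side is the expression of Eq.(\ref{eq:4.2}) evaluated at $(c,d,e_1)$ multiplied on the right by $e_2$; thus the associativity defect of $\mu$ is governed exactly by the two CYBP expressions, which both explains the shape of Eq.(\ref{eq:4.20}) and makes the final matching transparent.

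I expect the only real obstacle to be the Sweedler bookkeeping in (iii): each of the iterated products generates several terms whose coproduct indices must be normalized by repeated coassociativity before any cancellation is visible, and the signs of the $\s$- and $\tau$-contributions must be tracked with care. Once the common term is removed and the remainder is regrouped as above, the equivalence with Eq.(\ref{eq:4.20}) follows at once.
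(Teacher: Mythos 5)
Your proposal is correct and takes essentially the same route as the paper's proof: the paper likewise dismisses the coderivation and covariant-coderivation conditions as straightforward (they hold unconditionally, as you argue), and then reduces everything to associativity of $\mu$, expanding $\mu\ci(\mu\o id)$ and $\mu\ci(id\o\mu)$, cancelling the common term $-\tau(c,d_1)d_2\s(d_3,e)$, and regrouping the remainder into Eq.(\ref{eq:4.20}). Your closing observation that the two sides of Eq.(\ref{eq:4.20}) are $c_1$ times the expression (\ref{eq:4.1}) at $(c_2,d,e)$ and the expression (\ref{eq:4.2}) at $(c,d,e_1)$ times $e_2$ is accurate and matches what the paper exploits implicitly in Proposition \ref{cor:5.1}.
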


\begin{proof}
 It is straightforward to check that $\d_{\s}, \d_{\tau}$ are coderivations, and $\mu$ is a covariant coderivation with respect to $\d_{\s}, \d_{\tau}$. Thus in order to finish the proof we only need to prove that $(C, \mu)$ is an associative algebra if and only if Eq.(\ref{eq:4.20}) holds. While
 \begin{eqnarray*}
 (\mu \ci (id\o \mu))(c\o d\o e)&=&c_1\s(c_2, d_1)\s(d_2, e)-c_1\tau(d, e_1)\s(c_2, e_2)\\
 &&+\tau(d, e_{11})\tau(c, e_{12})e_2-\tau(c, d_1)d_2\s(d_3, e).
 \end{eqnarray*}
 and
 \begin{eqnarray*}
 (\mu \ci (\mu\o id))(c\o d\o e)&=&c_1\s(c_{21}, e)\s(c_{22}, d)-(\tau(c_1, e_1)\s(c_2, d)e_2\\
 &&+\tau(c, d_1)\tau(d_2, e_1)e_2-\tau(c, d_1)d_2\s(d_3, e),
 \end{eqnarray*}
 therefore we complete the proof.   \end{proof}

\begin{rmk}\label{rmk:4.14}
 If $C$ is a non-degenerate coalgebra, then the form of $\mu$ in Eq.(\ref{eq:4.19}) specifies the forms of $\d_{\s}$ and $\d_{\tau}$. Indeed, let us suppose that $\mu(c\o d)=c_1\s(c_2, d)-\tau(c, d_1)d_2$ is a $(\d_1, \d_2)$-covariant coderivation. Then, for all $c, d\in C$,
 $$
 c_1\o c_2\di_1 d=c_1\o(c_{21}\s(c_{22}, d)-\s(c_2, d_1)d_2)
 $$
 and
 $$
 c\di_2 d_1\o d_2=(c_1\tau(c_2, d_1)-\tau(c, d_{11})d_{12})\o d_2.
 $$
 Since $C$ is non-degenerate, these equations imply that $\d_1=\d_{\s}$ and $\d_2=\d_{\tau}$.
\end{rmk}

\section{Coquasitriangular covariant bialgebras and infinitesimal bialgebras}
\label{sec:CoquasitriangCovBialgInfinBialg}
In this section, we study  coquasitriangular covariant bialgebras generalizing coquasitriangular infinitesimal bialgebras, and  corresponding to our covariant bialgebras.
\begin{pro}\label{cor:5.1}  Let $C$ be a coalgebra and $\s, \tau\in C^*$.
 If $(\s, \tau)$ is a CYBP on $C$ and $\d_{\s}, \d_{\tau}, \mu$ are defined by Eq.(\ref{eq:4.19}), then $(C, \d_{\s}, \d_{\tau}, \mu)$ is a covariant bialgebra. In this case, $(C, \d_{\s}, \d_{\tau}, \mu)$ is called a {\bf coquasitriangular covariant bialgebra}.

 Furthermore, if $f: C\lr D$ is a coalgebra map and $(\s^f, \tau^f)$ is the CYBP induced by $f$ as in Eq.(\ref{eq:4.3}), then $f$ is a morphism of quasitriangular covariant bialgebras from the one associated to $(\s^f, \tau^f)$ to the one associated to $(\s, \tau)$.
\end{pro}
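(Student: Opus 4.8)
The plan is to reduce the first assertion to the characterization in Theorem~\ref{thm:4.13} and then to settle the morphism claim by a short direct computation.

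First, by Theorem~\ref{thm:4.13} the quadruple $(C, \d_{\s}, \d_{\tau}, \mu)$ built from Eq.(\ref{eq:4.19}) is a covariant bialgebra if and only if Eq.(\ref{eq:4.20}) holds, so it suffices to check that both sides of Eq.(\ref{eq:4.20}) vanish under the CYBP hypothesis. For the left-hand side I would use coassociativity to write $\D(c_2)=c_{21}\o c_{22}$ and observe that the bracketed factor
\[
\s(c_{21}, e)\s(c_{22}, d)-\s(c_2, d_1)\s(d_2, e)+\tau(d, e_1)\s(c_2, e_2)
\]
is precisely Eq.(\ref{eq:4.1}) evaluated at the triple $(c_2, d, e)$, hence equals $0$; thus the left-hand side is $c_1\cdot 0=0$. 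Symmetrically, writing $\D(e_1)=e_{11}\o e_{12}$, the bracketed factor on the right-hand side
\[
\tau(c_1, e_1)\s(c_2, d)-\tau(c, d_1)\tau(d_2, e_1)+\tau(d, e_{11})\tau(c, e_{12})
\]
is Eq.(\ref{eq:4.2}) evaluated at $(c, d, e_1)$, hence equals $0$, so the right-hand side is $0\cdot e_2=0$. Consequently Eq.(\ref{eq:4.20}) holds identically and $(C, \d_{\s}, \d_{\tau}, \mu)$ is a covariant bialgebra, which is by definition coquasitriangular.

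For the furthermore part, Proposition~\ref{pro:4.2} guarantees that $(\s^f, \tau^f)$ is a CYBP on $C$, so the first part already yields the coquasitriangular covariant bialgebra on $C$ associated to $(\s^f, \tau^f)$. By Definition~\ref{de:4.9} it remains only to check that $f$ is both a coalgebra map and an algebra map from this structure to the one on $D$. The coalgebra-map property is the hypothesis. For the algebra-map property I would expand the multiplication $\mu^f$ associated to $(\s^f, \tau^f)$, namely $\mu^f(c\o d)=c_1\s(f(c_2), f(d))-\tau(f(c), f(d_1))d_2$, apply $f$, and use the coalgebra-map identities $f(c_1)\o f(c_2)=f(c)_1\o f(c)_2$ (and the analogue for $d$) to rewrite the result as $f(c)_1\s(f(c)_2, f(d))-\tau(f(c), f(d)_1)f(d)_2=\mu(f(c)\o f(d))$. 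The same computation applied to $\d_{\s^f}$ and $\d_{\tau^f}$ shows $f\ci \d_{\s^f}=\d_{\s}\ci(f\o f)$ and $f\ci \d_{\tau^f}=\d_{\tau}\ci(f\o f)$, so $f$ respects every part of the structure and is a morphism of coquasitriangular covariant bialgebras.

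The whole argument hinges on a single recognition step: the two brackets appearing in Eq.(\ref{eq:4.20}) are nothing but the defining CYBP relations (\ref{eq:4.1}) and (\ref{eq:4.2}) after substituting one Sweedler component for a free variable. Thus I expect no genuine analytic obstacle; the only point needing care is the Sweedler bookkeeping, that is, correctly matching $c_{21}\o c_{22}$ with $\D(c_2)$ and $e_{11}\o e_{12}$ with $\D(e_1)$ via coassociativity so that the substituted instances of (\ref{eq:4.1}) and (\ref{eq:4.2}) align term by term with the two brackets.
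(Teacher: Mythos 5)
Your proposal is correct and follows the same route as the paper: the paper's proof simply invokes Theorem~\ref{thm:4.13} for the first assertion and notes the morphism claim ``can be proved directly,'' and you have accurately supplied the details left implicit—recognizing the two brackets in Eq.(\ref{eq:4.20}) as instances of Eqs.(\ref{eq:4.1}) and (\ref{eq:4.2}) at $(c_2,d,e)$ and $(c,d,e_1)$, and verifying the intertwining of $\mu$, $\d_{\s}$, $\d_{\tau}$ via the coalgebra-map identity. No gaps; the Sweedler bookkeeping checks out.
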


\begin{proof} The first statement follows immediately from Theorem \ref{thm:4.13}. The second one can be proved directly.  \end{proof}

\begin{rmk}\label{rmk:5.2}
 When $\s=\tau$ in Proposition \ref{cor:5.1}, $\d_{\s}=\d_{\tau}=\mu$, then $(C, \d_{\s}, \d_{\tau}, \mu)$ is an infinitesimal bialgebra. In this case, we call $(C, \d_{\s}, \d_{\tau}, \mu)$ (abbr. $(C, \d_{\s})$) a {\bf coquasitriangular infinitesimal bialgebra}, which will be studied later.
\end{rmk}

\begin{pro}\label{pro:5.3}
 Let $C$ be a coalgebra, $\s, \tau\in (C\o C)^*$, $\d_{\s}, \d_{\tau}, \mu$ are defined by Eq.(\ref{eq:4.19}). Then $(C, \d_{\s}, \d_{\tau}, \mu)$ is a coquasitriangular covariant bialgebra if and only if for all $c, d, e\in C$,
 \begin{equation}\label{eq:5.1}
 \s(c, de)=\s(c_1, e)\s(c_2, d),~~\tau(cd, e)=-\tau(d, e_1)\tau(c, e_2).
 \end{equation}
\end{pro}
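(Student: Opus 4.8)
The plan is to reduce everything to the CYBP conditions of Definition \ref{de:4.1}. By Proposition \ref{cor:5.1}, the quadruple $(C,\d_\s,\d_\tau,\mu)$ is a coquasitriangular covariant bialgebra precisely when $(\s,\tau)$ is a CYBP on $C$, that is, when Eqs.\,(\ref{eq:4.1}) and (\ref{eq:4.2}) both hold. Hence it suffices to prove that the two identities in (\ref{eq:5.1}) are together equivalent to the conjunction of (\ref{eq:4.1}) and (\ref{eq:4.2}). The whole argument is a direct expansion of the products $de$ and $cd$ by means of the formula $\mu(c\o d)=c_1\s(c_2,d)-\tau(c,d_1)d_2$ from (\ref{eq:4.19}), combined with the linearity of $\s,\tau\in(C\o C)^*$.

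First I would treat the left equation of (\ref{eq:5.1}). Substituting $de=d_1\s(d_2,e)-\tau(d,e_1)e_2$ and using linearity of $\s$ gives $\s(c,de)=\s(c,d_1)\s(d_2,e)-\tau(d,e_1)\s(c,e_2)$. Setting this equal to $\s(c_1,e)\s(c_2,d)$ and transposing all terms to one side yields exactly $\s(c_1,e)\s(c_2,d)-\s(c,d_1)\s(d_2,e)+\tau(d,e_1)\s(c,e_2)=0$, which is Eq.\,(\ref{eq:4.1}).

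Next I would treat the right equation of (\ref{eq:5.1}). Substituting $cd=c_1\s(c_2,d)-\tau(c,d_1)d_2$ and using linearity of $\tau$ gives $\tau(cd,e)=\tau(c_1,e)\s(c_2,d)-\tau(c,d_1)\tau(d_2,e)$. Setting this equal to $-\tau(d,e_1)\tau(c,e_2)$ and transposing gives precisely $\tau(c_1,e)\s(c_2,d)-\tau(c,d_1)\tau(d_2,e)+\tau(d,e_1)\tau(c,e_2)=0$, which is Eq.\,(\ref{eq:4.2}).

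Because each substitution is an identity and each rearrangement is reversible, (\ref{eq:5.1}) holds for all $c,d,e\in C$ if and only if (\ref{eq:4.1}) and (\ref{eq:4.2}) both hold, i.e. if and only if $(\s,\tau)$ is a CYBP, i.e. if and only if $(C,\d_\s,\d_\tau,\mu)$ is a coquasitriangular covariant bialgebra. There is essentially no genuine obstacle here; the only points requiring care are the sign bookkeeping in the two expansions and the correct matching of Sweedler indices, together with the initial observation that, by Proposition \ref{cor:5.1}, the phrase \emph{coquasitriangular covariant bialgebra} is by definition synonymous with \emph{$(\s,\tau)$ is a CYBP}.
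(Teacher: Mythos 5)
Your proof is correct and follows essentially the same route as the paper: the paper's proof likewise reduces the statement to the equivalence of Eq.(\ref{eq:5.1}) with the CYBP conditions Eqs.(\ref{eq:4.1}) and (\ref{eq:4.2}), which it leaves as a straightforward expansion of $\mu$ from Eq.(\ref{eq:4.19}) --- exactly the computation you carry out. Your sign bookkeeping and Sweedler indices in both expansions check out, so nothing is missing.
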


\begin{proof} It is straightforward to prove that Eq.(\ref{eq:5.1}) is equivalent to Eqs.(\ref{eq:4.1}) and (\ref{eq:4.2}), i.e. $(\s, \tau)$ is a CYBP.  \end{proof}


\begin{pro}\label{pro:5.5}
 Counital coquasitriangular covariant bialgebra structures on $C$ are determined by $\s\in (C\o C)^*$ such that for all $c, d, e\in C$,
 \begin{equation}\label{eq:5.2}
 \s(c, de)=\s(c_1, e)\s(c_2, d),~~\tau(cd, e)=-\s(d, e_1)\s(c, e_2)+\v(c)\s(d, e)+\s(c, e)\v(d),
 \end{equation}
 where $cd=\v(c)d+c_1\s(c_2, d)-\s(c, d_1)d_2$.
\end{pro}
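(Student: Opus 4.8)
The plan is to bootstrap from Proposition~\ref{pro:5.3}, using the extra counitality axiom $\v(cd)=\v(c)\v(d)$ to pin $\tau$ down in terms of $\s$. First I would apply $\v$ to the product $cd=\mu(c\o d)=c_1\s(c_2, d)-\tau(c, d_1)d_2$; collapsing the Sweedler legs with the counit identities $\v(c_1)c_2=c=c_1\v(c_2)$ gives
\[
\v(cd)=\v(c_1)\s(c_2, d)-\tau(c, d_1)\v(d_2)=\s(c, d)-\tau(c, d).
\]
Thus counitality is equivalent to $\tau=\s-\v\o\v$, i.e. $\tau(c, d)=\s(c, d)-\v(c)\v(d)$ for all $c, d\in C$. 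Substituting this back into $\mu$ and simplifying $\v(c)\v(d_1)d_2=\v(c)d$ recovers exactly the product $cd=\v(c)d+c_1\s(c_2, d)-\s(c, d_1)d_2$ displayed in the statement, so that a counital coquasitriangular covariant bialgebra structure is carried entirely by $\s$, with $\tau$ read off as $\s-\v\o\v$.

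With $\tau$ eliminated, I would invoke Proposition~\ref{pro:5.3}: the structure $(C, \d_{\s}, \d_{\tau}, \mu)$ is a coquasitriangular covariant bialgebra precisely when Eq.(\ref{eq:5.1}) holds. Its first equation, $\s(c, de)=\s(c_1, e)\s(c_2, d)$, already lives entirely in $\s$ and is taken over verbatim as the first equation of Eq.(\ref{eq:5.2}). For the second equation I would substitute $\tau=\s-\v\o\v$ into the right-hand side $-\tau(d, e_1)\tau(c, e_2)$, expand the product of the two differences, and then collapse the counit factors using $\v(e_1)\s(c, e_2)=\s(c, e)$ and $\s(d, e_1)\v(e_2)=\s(d, e)$; this produces the right-hand side $-\s(d, e_1)\s(c, e_2)+\v(c)\s(d, e)+\s(c, e)\v(d)$ (up to the scalar contribution of the $\v\o\v$ part) of the second equation of Eq.(\ref{eq:5.2}). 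Reading the equivalences backwards, any $\s$ obeying Eq.(\ref{eq:5.2}) together with $\tau:=\s-\v\o\v$ returns Eq.(\ref{eq:5.1}), so Proposition~\ref{pro:5.3} yields a coquasitriangular covariant bialgebra, counital by the first step; this is the desired correspondence.

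No conceptual obstacle is expected: the whole argument is Sweedler bookkeeping hinged on the single substitution $\tau=\s-\v\o\v$. The one place demanding care is the expansion of the quadratic term $\tau(d, e_1)\tau(c, e_2)$, where I must keep precise track of each scalar $\v(c), \v(d), \v(e)$ and verify that the counit absorbs the half-comultiplied legs as intended. As an internal consistency check I would also confirm that, after the substitution, the two equations of Eq.(\ref{eq:5.2}) both reduce to the single relation $\s(c_1, e)\s(c_2, d)-\s(c, d_1)\s(d_2, e)+\s(d, e_1)\s(c, e_2)=\v(d)\s(c, e)$, which is reassuring given that $\tau$ is now a function of $\s$.
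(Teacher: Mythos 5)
Your proposal is correct and takes essentially the same approach as the paper: the paper's proof likewise observes that counitality of $(C,\d_{\s},\d_{\tau},\mu)$ is equivalent to $\tau(c,d)=\s(c,d)-\v(c)\v(d)$ and then completes the argument by replacing $\tau$ with $\s-\v\o\v$ in Eq.(\ref{eq:5.1}) of Proposition \ref{pro:5.3}. Your careful expansion even surfaces the scalar term $-\v(c)\v(d)\v(e)$ that the printed Eq.(\ref{eq:5.2}) silently omits, and your consistency check that both CYBP equations collapse to the single relation $\s(c_1,e)\s(c_2,d)-\s(c,d_1)\s(d_2,e)+\s(d,e_1)\s(c,e_2)=\v(d)\s(c,e)$ is accurate.
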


 \begin{proof}
 A coquasitriangular covariant bialgebra $(C, \d_{\s}, \d_{\tau}, \mu)$ is counital if and only if $\s(c, d)-\tau(c, d)=\v(c)\v(d)$, i.e., $\tau(c, d)=\s(c, d)-\v(c)\v(d)$. Thus we  complete the proof by replacing $\tau$ by $\s-\v\o \v$ in Eq.(\ref{eq:5.1}). \end{proof}

\begin{ex}\label{ex:5.6}
 Let $C$ be a counital coalgebra and $\a\in C^*$ such that $\a(c_1)\a(c_2)=\a(c)$, $\forall~c\in C$, and $k\in \{0, 1\}\subset K$. Then
 $\s^{\a}(c\o d)=k\v(c)\a(d)+(1-k)\a(c)\v(d)$ satisfies Eq.(\ref{eq:5.2}). Thus $(\s^{\a}, \tau^{\a})$ is a CYBP, where
 $$
 \tau^{\a}(c\o d)=\s^{\a}(c\o d)-\v(c)\v(d)=k\v(c)\a(d)+(1-k)\a(c)\v(d)-\v(c)\v(d),
 $$
 and
 $$
 cd=\mu^{\a}(c\o d)=\v(c)d+k(c\a(d)-\v(c)\a(d_1)d_2)+(1-k)(c_1\a(c_2)\v(d)-\a(c)\v(d)).
 $$
 Then $(C, \s^{\a}, \tau^{\a}, \mu^{\a})$ is a coquasitriangular covariant bialgebra.

 By Proposition \ref{pro:4.4}, $(C, Q^{\s^{\a}}, T^{\tau^{\a}})$ is a Rota-Baxter cosystem associated to $(\s^{\a}, \tau^{\a})$, where
 $$
 Q^{\s^{\a}}(c)=kc_1\a(c_2)+(1-k)\a(c_1)c_2, ~T^{\tau^{\a}}(c)=kc_1\a(c_2)+(1-k)\a(c_1)c_2-c.
 $$

 By Proposition \ref{pro:3.6}, $(C, \D_{*})$ is a coassociative coalgebra, where
 $$
 \D_{*}(c)=k(c_1\a(c_2)\o c_3+c_1\o c_2\a(c_3))+(1-k)(\a(c_1)c_2\o c_3+c_1\o \a(c_2)c_3)-c_1\o c_2.
 $$
 And $(C, \D_{\bullet})$ is a pre-Lie coalgebra, where
 $$
 \D_{\bullet}(c)=k(c_1\a(c_2)\o c_3-c_2\o c_1\a(c_3))+(1-k)(\a(c_1)c_2\o c_3-c_3\o \a(c_2)c_1)-c_2\o c_1.
 $$

 Define $\d(c\o d)=\v(c)d-c\v(d)$. Then $(C, 0, -\d, \mu^0)$ and $(C, \s, 0, \mu^{\v})$ are counital covariant bialgebras associated to the coderivation $\d$.
\end{ex}

\begin{defi}\label{de:5.7}
 {\rm A {\bf coquasitriangular infinitesimal bialgebra} is a pair $(C, \s)$ where $C$ is a coassociative coalgebra and $\s\in (C\o C)^*$ is a solution to
 \begin{equation}\label{eq:5.3}
 \s(c_1, e)\s(c_2, d)-\s(c, d_1)\s(d_2, e)+\s(d, e_1)\s(c, e_2)=0
 \end{equation}
 for all $c, d, e\in C$.

 We call Eq.(\ref{eq:5.3}) a {\bf coassociative Yang-Baxter equation (abbr. CYBA) in $C$}.}
\end{defi}

\begin{rmk}\label{rmk:5.8}The linear form
 $\s\in (C\o C)^*$ is a solution to CAYB if and only if $(C, \mu_{\s})$ is an infinitesimal bialgebra, where $\mu_{\s}(c\ d)=c_1\s(c_2, d)-\s(c, d_1)d_2$, $\forall c, d\in C$.
\end{rmk}

\begin{ex}\label{ex:5.9}
 Let $C$ be any counital coalgebra possessing an element $\tau\in C^*$ such that $\tau(c_1)\tau(c_2)=0$. Then $\s=\v\o \tau$ satisfies CAYB and the corresponding infinitesimal bialgebra structure is
 $$
 \mu_{\s}(c\o d)=c\tau(d)-\v(c)\tau(d_1)d_2, \forall c, d\in C.
 $$
\end{ex}

\begin{pro}\label{pro:5.10}
 A principal coderivation $\mu_{\s}: C\o C\lr C$ is associative if and only if, for all $c, d, e\in C$,
 \begin{eqnarray*}
 &&c_1(\s(c_{21}, e)\s(c_{22}, d)-\s(c_2, d_1)\s(d_2, e)+\s(d, e_1)\s(c_2, e_2))\\
 &&~~=(\s(c_1, e_1)\s(c_2, d)-\s(c, d_1)\s(d_2, e_1)+\s(d, e_{11})\s(c, e_{12}))e_2.
 \end{eqnarray*}
\end{pro}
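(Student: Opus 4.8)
The plan is to recognize that Proposition \ref{pro:5.10} is exactly the specialization $\tau=\s$ of the associativity analysis already carried out in Theorem \ref{thm:4.13}. Recall from Remark \ref{rmk:5.8} that the principal coderivation is $\mu_{\s}(c\o d)=c_1\s(c_2,d)-\s(c,d_1)d_2$, which is precisely the map $\mu$ of Eq.(\ref{eq:4.19}) when one sets $\tau=\s$ (so that $\d_{\s}=\d_{\tau}=\mu_{\s}$). In the proof of Theorem \ref{thm:4.13} it was observed that $\d_{\s},\d_{\tau}$ are automatically coderivations and $\mu$ is automatically a covariant coderivation, so the only genuine content there was that the associativity of $\mu$ is equivalent to Eq.(\ref{eq:4.20}). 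Since the proposition asks solely about associativity of $\mu_{\s}$, substituting $\tau=\s$ into Eq.(\ref{eq:4.20}) reproduces verbatim the displayed identity, and the quickest route is simply to invoke Theorem \ref{thm:4.13}.

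For a self-contained argument I would instead compute the two triple products directly. Applying $\mu_{\s}$ twice, expanding the four resulting terms, and relabelling the third-order coproducts by coassociativity gives
\[
(\mu_{\s}\ci(id\o \mu_{\s}))(c\o d\o e)=c_1\s(c_2,d_1)\s(d_2,e)-c_1\s(d,e_1)\s(c_2,e_2)+\s(d,e_{11})\s(c,e_{12})e_2-\s(c,d_1)d_2\s(d_3,e),
\]
\[
(\mu_{\s}\ci(\mu_{\s}\o id))(c\o d\o e)=c_1\s(c_{21},e)\s(c_{22},d)-\s(c_1,e_1)\s(c_2,d)e_2+\s(c,d_1)\s(d_2,e_1)e_2-\s(c,d_1)d_2\s(d_3,e).
\]
Associativity means these two expressions coincide. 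The mixed term $-\s(c,d_1)d_2\s(d_3,e)$ occurs on both sides and cancels; collecting the remaining terms of the shape $c_1(\cdots)$ on the left and those of the shape $(\cdots)e_2$ on the right, and multiplying through by $-1$, yields exactly the asserted identity. Thus associativity holds if and only if that identity holds.

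The main (and essentially the only) obstacle is the Sweedler-index bookkeeping: each double application of $\mu_{\s}$ produces coproducts nested to third order, and one must apply coassociativity consistently to bring every summand to one of the two canonical forms $c_1(\cdots)$ or $(\cdots)e_2$, so that the cancellation of the common term and the final regrouping become transparent. Because this is literally the $\tau=\s$ instance of the computation in the proof of Theorem \ref{thm:4.13}, no new difficulty arises beyond careful relabelling.
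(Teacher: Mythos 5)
Your proposal is correct and takes essentially the same route as the paper: Proposition \ref{pro:5.10} is stated there without proof precisely because it is the $\tau=\s$ specialization of Theorem \ref{thm:4.13}, and your self-contained expansion of $(\mu_{\s}\ci(id\o\mu_{\s}))$ and $(\mu_{\s}\ci(\mu_{\s}\o id))$ reproduces the paper's proof of that theorem verbatim with $\tau=\s$, including the cancellation of the common term $-\s(c,d_1)d_2\s(d_3,e)$. (One cosmetic point: no multiplication by $-1$ is actually needed --- moving the $c_1(\cdots)$ terms to the left and the $(\cdots)e_2$ terms to the right already yields the displayed identity --- but this does not affect correctness.)
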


\begin{pro}\label{pro:5.11}The pair
 $(C, \s)$ is a coquasitriangular infinitesimal bialgebra if and only if for all $c, d, e\in C$,
 \begin{equation}\label{eq:5.4}
 \s(cd, e)=-\s(d, e_1)\s(c, e_2),
 \end{equation}
 \begin{equation}\label{eq:5.5}
 \s(c, de)=\s(c_1, e)\s(c_2, d).
 \end{equation}
\end{pro}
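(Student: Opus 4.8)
The plan is to obtain this statement as the $\tau=\s$ specialization of Proposition~\ref{pro:5.3}, while also confirming it directly by an elementary Sweedler computation so that the equivalence is transparent. I begin by recalling from Definition~\ref{de:5.7} that $(C,\s)$ is a coquasitriangular infinitesimal bialgebra precisely when $\s$ solves the coassociative Yang-Baxter equation
\[
\s(c_1, e)\s(c_2, d)-\s(c, d_1)\s(d_2, e)+\s(d, e_1)\s(c, e_2)=0 ,
\]
and from Remark~\ref{rmk:5.8} that the associated principal product is $cd=\mu_{\s}(c\o d)=c_1\s(c_2, d)-\s(c, d_1)d_2$. The whole point will be to unfold the symbols $\s(cd,e)$ and $\s(c,de)$ through this formula.

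First I would expand both sides of the two claimed identities using bilinearity of $\s$ together with the formula for $\mu_{\s}$. Substituting $cd=c_1\s(c_2,d)-\s(c,d_1)d_2$ into $\s(cd,e)$ yields
\[
\s(cd,e)=\s(c_1,e)\s(c_2,d)-\s(c,d_1)\s(d_2,e),
\]
and substituting $de=d_1\s(d_2,e)-\s(d,e_1)e_2$ into $\s(c,de)$ yields
\[
\s(c,de)=\s(c,d_1)\s(d_2,e)-\s(d,e_1)\s(c,e_2).
\]

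Next I would compare each expanded identity with the coassociative Yang-Baxter equation. From the first expansion, Eq.~(\ref{eq:5.4}), namely $\s(cd,e)=-\s(d,e_1)\s(c,e_2)$, becomes after transposing the right-hand term exactly the coassociative Yang-Baxter equation; from the second expansion, Eq.~(\ref{eq:5.5}), namely $\s(c,de)=\s(c_1,e)\s(c_2,d)$, rearranges (after multiplying by $-1$) to the very same equation. Hence each of the two displayed identities is on its own equivalent to the coassociative Yang-Baxter equation, so their conjunction is equivalent to it as well, which is precisely the assertion of the proposition. As an alternative route, one observes that putting $\tau=\s$ collapses the CYBP conditions (\ref{eq:4.1})--(\ref{eq:4.2}) into the single CYBA (Example~\ref{ex:4.3}~(1)) and forces $\d_{\s}=\d_{\tau}=\mu_{\s}$, so Proposition~\ref{pro:5.3} specializes verbatim to Eqs.~(\ref{eq:5.4})--(\ref{eq:5.5}).

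I do not anticipate a genuine obstacle here beyond careful Sweedler bookkeeping; the only point that needs attention is remembering that $cd$ and $de$ stand for the principal product $\mu_{\s}$ rather than any external multiplication, so that each term such as $\s(cd,e)$ must first be resolved through the definition of $\mu_{\s}$ before its three-variable summands can be matched termwise against the coassociative Yang-Baxter equation.
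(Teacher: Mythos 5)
Your proposal is correct, and it actually contains two routes: the paper's own proof is exactly your ``alternative route'' --- the one-line specialization $\tau=\s$ in Proposition~\ref{pro:5.3} (under which $\d_{\s}=\d_{\tau}=\mu_{\s}$ and Eq.~(\ref{eq:5.1}) becomes Eqs.~(\ref{eq:5.4})--(\ref{eq:5.5})). Your primary argument, the direct Sweedler expansion of $\s(cd,e)$ and $\s(c,de)$ through $\mu_{\s}(c\o d)=c_1\s(c_2,d)-\s(c,d_1)d_2$, is a genuine addition: since the paper's proof of Proposition~\ref{pro:5.3} itself is only asserted to be ``straightforward,'' your computation effectively supplies, in the $\tau=\s$ case, the verification the paper leaves implicit. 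I checked the signs: $\s(cd,e)=\s(c_1,e)\s(c_2,d)-\s(c,d_1)\s(d_2,e)$ and $\s(c,de)=\s(c,d_1)\s(d_2,e)-\s(d,e_1)\s(c,e_2)$ are both correct, and each of Eqs.~(\ref{eq:5.4}) and (\ref{eq:5.5}) then rearranges to Eq.~(\ref{eq:5.3}). This yields the mildly stronger observation, not stated in the paper, that \emph{either one} of the two identities alone already characterizes the coquasitriangular structure --- which is the shadow of the fact that, at $\tau=\s$, the two CYBP conditions (\ref{eq:4.1}) and (\ref{eq:4.2}) collapse to the single equation (\ref{eq:5.3}). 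Your closing caveat is also the right one to flag: $cd$ and $de$ must be read as $\mu_{\s}$, not an external product, before the termwise matching makes sense.
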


\begin{proof} Let $\s=\tau$ in Proposition \ref{pro:5.3}.  \end{proof}

 If $(C, \mu, \D)$ is a finite dimensional infinitesimal bialgebra, then $C'=(C^*, \D^{*op}, -\mu^{*cop})$ and ${'C}=(C^*, -\D^{*op}, \mu^{*cop})$ are infinitesimal bialgebras (see \cite{Ag}).

\begin{pro}\label{pro:5.12}
 Let $(C, \s)$ be a finite dimensional coquasitriangular infinitesimal bialgebra. Then the maps
 \begin{equation}\label{eq:5.6}
 \zeta_{\s}: C\lr C',~~\zeta_{\s}(c)=\sum e^i\s(e_i, c),
 \end{equation}
 \begin{equation}\label{eq:5.7}
 \xi_{\s}: C\lr {'C},~~\xi_{\s}(c)=\sum e^i\s(c, e_i)
 \end{equation}
 where $c\in C$, $\{e_i\}$ is the basis of $C$ and $\{e^i\}$ is the dual basis, are morphisms of infinitesimal bialgebras.
\end{pro}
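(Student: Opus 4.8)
The plan is to verify directly that each of the maps $\zeta_\s$ and $\xi_\s$ is simultaneously an algebra map and a coalgebra map, where the source is $(C,\mu_\s,\D)$ with $\mu_\s(c\o d)=c_1\s(c_2,d)-\s(c,d_1)d_2$ (which is an infinitesimal bialgebra by Remark \ref{rmk:5.8}), and the targets $C'=(C^*,\D^{*op},-\mu_\s^{*cop})$ and ${'C}=(C^*,-\D^{*op},\mu_\s^{*cop})$ carry the infinitesimal bialgebra structures recalled just before the statement. Because each target multiplication and comultiplication is a transpose of a structure map on $C$, every identity to be checked will reduce to one of the two characterizing identities \eqref{eq:5.4} and \eqref{eq:5.5} from Proposition \ref{pro:5.11}. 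I would begin with $\zeta_\s$, establishing the coalgebra compatibility first and the algebra compatibility second, then remark that $\xi_\s$ is handled by the symmetric computation with the roles of the two arguments of $\s$ swapped.

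First I would unwind the definitions. For $\zeta_\s(c)=\sum_i e^i\,\s(e_i,c)$, pairing against an arbitrary $d\in C$ gives $\langle \zeta_\s(c),d\rangle=\s(d,c)$, so $\zeta_\s$ is nothing but the map $C\lr C^*$ dual to the linear form $\s$ read in its first slot; this reformulation lets me avoid carrying the basis sums explicitly. To check that $\zeta_\s$ is a coalgebra map into $C'=(C^*,\D^{*op},\dots)$, I would show $\D^{*op}\ci\zeta_\s=(\zeta_\s\o\zeta_\s)\ci\D$ by pairing both sides against $d\o e$: the left side produces $\langle\zeta_\s(c),\,\text{(product of }d,e\text{ in }C\text{)}\rangle=\s(de,c)$ up to the op-twist, while the right side produces $\s(d,c_1)\s(e,c_2)$-type terms, and the equality is exactly \eqref{eq:5.5} (with the arguments in the slot dictated by the transpose). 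Dually, to check $\zeta_\s$ is an algebra map, that is $\zeta_\s\ci\mu_\s=(-\mu_\s^{*cop})\ci(\zeta_\s\o\zeta_\s)$, I would pair against $e\in C$: the left side gives $\s(e,\mu_\s(c\o d))$ and the right side, after transposing $\mu_\s$ and inserting $\D$, collapses to a $\s(e_1,\cdot)\s(e_2,\cdot)$ expression, and the identity sought is precisely \eqref{eq:5.4}. The signs in $-\mu_\s^{*cop}$ and $-\D^{*op}$ are arranged exactly so that the minus sign in \eqref{eq:5.4} is absorbed.

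For $\xi_\s(c)=\sum_i e^i\,\s(c,e_i)$, the analogous reformulation is $\langle\xi_\s(c),d\rangle=\s(c,d)$, i.e. $\xi_\s$ is dual to $\s$ read in its second slot, and the target ${'C}=(C^*,-\D^{*op},\mu_\s^{*cop})$ has the opposite signs; the two sign changes compensate so that the same two identities \eqref{eq:5.4} and \eqref{eq:5.5} again supply the verifications. I would therefore present the $\xi_\s$ case briefly as ``entirely parallel,'' pointing out only the sign bookkeeping.

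The main obstacle I anticipate is not conceptual but a careful tracking of the op/cop twists and the two minus signs in the target infinitesimal bialgebra structures: one must be scrupulous that the transpose of $\mu_\s$ pairs the \emph{first} tensor factor of $\D$ with the correct argument and that the coopposite flip matches the ordering in \eqref{eq:5.4}--\eqref{eq:5.5}. Once the pairing conventions are fixed, each of the four compatibilities (two for $\zeta_\s$, two for $\xi_\s$) is a one-line consequence of Proposition \ref{pro:5.11}, so the real content is entirely contained in that proposition; the present statement is essentially its dualization against a basis.
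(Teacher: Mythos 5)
Your strategy coincides with the paper's own proof: both note that $\langle \zeta_{\s}(c), b\rangle=\s(b,c)$ and $\langle \xi_{\s}(c), b\rangle=\s(c,b)$, pair each of the four compatibility conditions against elements of $C$ (or $C\o C$), and reduce each check to one of the two identities of Proposition \ref{pro:5.11}. So the plan is sound and the pairing computations you describe in words are the right ones.

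However, your bookkeeping contains a consistent double transposition that you should fix before writing this up. First, you interchange the structure maps of $C'=(C^*, \D^{*op}, -\mu_{\s}^{*cop})$: here $\D^{*op}$ is the \emph{multiplication} and $-\mu_{\s}^{*cop}$ the \emph{comultiplication}, so the coalgebra-map condition for $\zeta_{\s}$ reads $(-\mu_{\s}^{*cop})\ci \zeta_{\s}=(\zeta_{\s}\o\zeta_{\s})\ci\D$ and the algebra-map condition reads $\zeta_{\s}\ci\mu_{\s}=\D^{*op}\ci(\zeta_{\s}\o\zeta_{\s})$ --- as you wrote them, neither composition even typechecks ($\D^{*op}$ has source $C^*\o C^*$, while $-\mu_{\s}^{*cop}$ has source $C^*$). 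Second, the attributions of the two identities are likewise swapped: multiplicativity of $\zeta_{\s}$ rests on Eq.(\ref{eq:5.5}), via $\langle\zeta_{\s}(ab),c\rangle=\s(c,ab)=\s(c_1,b)\s(c_2,a)$, while comultiplicativity rests on Eq.(\ref{eq:5.4}), via $-\s(cb,a)=\s(b,a_1)\s(c,a_2)$ --- the reverse of what you claim (and symmetrically for $\xi_{\s}$, where the paper uses Eq.(\ref{eq:5.4}) for the product and Eq.(\ref{eq:5.5}) for the coproduct). Since the underlying pairing calculations you sketch are correct and match the paper's, these are repairable labeling slips rather than a conceptual gap, but given that you yourself identify the op/cop and sign tracking as the only real difficulty here, the writeup must get exactly these points right.
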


\begin{proof}
 We note that for all $b, c\in C$, $ \langle \zeta_{\s}(c), b \rangle =\s(b, c)$ and $ \langle \xi_{\s}(c), b \rangle =\s(c, b)$. For all $a, b, c\in C$,
 \begin{eqnarray*}
  \langle \zeta_{\s}(ab), c \rangle
 &=&\s(c, ab)\stackrel{(\ref{eq:5.5})}{=}\s(c_1, b)\s(c_2, a)\\
 &=& \langle \zeta_{\s}(b), c_1 \rangle  \langle \zeta_{\s}(a), c_2 \rangle = \langle \zeta_{\s}(a)\zeta_{\s}(b), c \rangle ,
 \end{eqnarray*}
 \begin{eqnarray*}
  \langle \zeta_{\s}(a)_1\o \zeta_{\s}(a)_2, b\o c \rangle
 &=&- \langle \zeta_{\s}(a), cb \rangle =-\s(cb, a)\\
 &\stackrel{(\ref{eq:5.4})}{=}&\s(b, a_1)\s(c, a_2)= \langle \zeta_{\s}(a_1), b \rangle  \langle \zeta_{\s}(a_2), c \rangle \\
 &=& \langle \zeta_{\s}(a_1)\o \zeta_{\s}(a_2), b\o c \rangle ,
 \end{eqnarray*}
 \begin{eqnarray*}
  \langle \xi_{\s}(ab), c \rangle
 &=&\s(ab, c)\stackrel{(\ref{eq:5.4})}{=}-\s(b, c_1)\s(a, c_2)\\
 &=&- \langle \xi_{\s}(b), c_1 \rangle  \langle \xi_{\s}(a), c_2 \rangle = \langle \xi_{\s}(a)\xi_{\s}(b), c \rangle ,
 \end{eqnarray*}
 \begin{eqnarray*}
  \langle \xi_{\s}(a)_1\o \xi_{\s}(a)_2, b\o c \rangle
 &=& \langle \xi_{\s}(a), cb \rangle =\s(a, cb)\\
 &\stackrel{(\ref{eq:5.5})}{=}&\s(a_1, b)\s(a_2, c)= \langle \xi_{\s}(a_1), b \rangle  \langle \xi_{\s}(a_2), c \rangle \\
 &=& \langle \xi_{\s}(a_1)\o \xi_{\s}(a_2), b\o c \rangle ,
 \end{eqnarray*}
 finishing the proof.    \end{proof}

\begin{cor}\label{cor:5.13}
 Let $(C, \s)$ be a coquasitriangular infinitesimal Hopf algebra with bijective antipode $S$. Then
 \begin{eqnarray}
 && \s(id\o S)=\s(S^{-1}\o id);  \label{eq:5.8}\\
 && \s(S\o id)=\s(id\o S^{-1});  \label{eq:5.9}\\
 && \s(S\o S)=\s=\s(S^{-1}\o S^{-1}).  \label{eq:5.10}
 \end{eqnarray}
\end{cor}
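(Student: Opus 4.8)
The plan is to extract the three identities from the duality morphisms of Proposition~\ref{pro:5.12}, after first disposing of the formal part of the statement, which uses only that $S$ is bijective. Equations (\ref{eq:5.8}) and (\ref{eq:5.9}) are equivalent: substituting $c\mapsto S(c)$ and $d\mapsto S^{-1}(d)$ into $\sigma(c,S(d))=\sigma(S^{-1}(c),d)$ yields $\sigma(S(c),d)=\sigma(c,S^{-1}(d))$, and the reverse substitution recovers the first. Moreover (\ref{eq:5.10}) follows from (\ref{eq:5.9}): reading the latter as $\sigma(S(c),x)=\sigma(c,S^{-1}(x))$ and taking $x=S(d)$ gives $\sigma(S(c),S(d))=\sigma(c,d)$, while replacing $c$ by $S^{-1}(c)$ gives $\sigma(c,d)=\sigma(S^{-1}(c),S^{-1}(d))$. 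Hence the whole statement reduces to the single identity (\ref{eq:5.9}).

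To prove (\ref{eq:5.9}) I would use Proposition~\ref{pro:5.12}. Since $C$ is finite dimensional, the maps $\zeta_\sigma\colon C\lr C'$ and $\xi_\sigma\colon C\lr {}'C$ are morphisms of infinitesimal bialgebras satisfying $\langle \zeta_\sigma(c),b\rangle=\sigma(b,c)$ and $\langle \xi_\sigma(c),b\rangle=\sigma(c,b)$ for all $b,c\in C$. Because $C$ is an infinitesimal Hopf algebra and the antipode of an infinitesimal Hopf algebra is uniquely determined and functorial (see \cite{Ag}), these bialgebra morphisms intertwine the antipodes, so that $\zeta_\sigma\ci S=S_{C'}\ci \zeta_\sigma$ and $\xi_\sigma\ci S=S_{{}'C}\ci \xi_\sigma$.

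The crucial input --- and the step I expect to be the main obstacle --- is the identification of the antipodes of the dual infinitesimal Hopf algebras $C'=(C^*,\D^{*op},-\mu^{*cop})$ and ${}'C=(C^*,-\D^{*op},\mu^{*cop})$: I claim each is the transpose of $S^{-1}$, i.e. $S_{C'}=S_{{}'C}=(S^{-1})^*$, where $(S^{-1})^*$ is characterized by $\langle (S^{-1})^*f,b\rangle=\langle f,S^{-1}(b)\rangle$. The appearance of $S^{-1}$ rather than $S$ is exactly what creates the asymmetry between $S$ and $S^{-1}$ in the statement; since the naive op/cop bookkeeping for an ordinary Hopf algebra would instead suggest $S^*$, this identification must be checked carefully against Aguiar's explicit description of the antipode on the dual infinitesimal bialgebra in \cite{Ag}, and it is here that the infinitesimal (as opposed to ordinary) nature of the structure enters.

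Granting this, the identities drop out by pairing against an arbitrary $b\in C$. From $\zeta_\sigma\ci S=(S^{-1})^*\ci \zeta_\sigma$ we get $\sigma(b,S(c))=\langle \zeta_\sigma(S(c)),b\rangle=\langle (S^{-1})^*\zeta_\sigma(c),b\rangle=\langle \zeta_\sigma(c),S^{-1}(b)\rangle=\sigma(S^{-1}(b),c)$, which after relabeling is (\ref{eq:5.8}); and from $\xi_\sigma\ci S=(S^{-1})^*\ci \xi_\sigma$ the same computation gives $\sigma(S(c),b)=\sigma(c,S^{-1}(b))$, which is (\ref{eq:5.9}). Combined with the reductions of the first paragraph, this establishes (\ref{eq:5.8}), (\ref{eq:5.9}) and (\ref{eq:5.10}). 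As an alternative avoiding finite dimensionality, one could attempt a direct proof of (\ref{eq:5.9}) from the coquasitriangularity relations $\sigma(cd,e)=-\sigma(d,e_1)\sigma(c,e_2)$ and $\sigma(c,de)=\sigma(c_1,e)\sigma(c_2,d)$ of Proposition~\ref{pro:5.11}, together with the defining equation of the antipode of $(C,\mu_\sigma)$, imitating the classical argument for coquasitriangular Hopf algebras; this route, however, requires having the explicit infinitesimal antipode axiom in hand.
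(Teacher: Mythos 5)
Your proposal is correct and takes essentially the same route as the paper's proof: both rely on Proposition \ref{pro:5.12} together with the two facts you isolate, namely that morphisms of infinitesimal bialgebras between infinitesimal Hopf algebras intertwine antipodes and that the antipodes of $C'$ and ${}'C$ are the transpose $(S^{-1})^*$ (written $S^{-1*}$ in the paper), after which the identical pairing computation $\sigma(a,S(b))=\langle \zeta_\sigma(S(b)),a\rangle=\langle \zeta_\sigma(b),S^{-1}(a)\rangle=\sigma(S^{-1}(a),b)$ and its analogue for $\xi_\sigma$ give (\ref{eq:5.8}) and (\ref{eq:5.9}). Your preliminary reduction of (\ref{eq:5.10}) to (\ref{eq:5.9}) corresponds exactly to the paper's closing composition argument $\sigma(S\o S)=\sigma(id\o S)(S\o id)=\sigma(S^{-1}\o id)(S\o id)=\sigma$, so the differences are purely expository.
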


\begin{proof}
 Since $\zeta_{\s}: C\lr C'$ and $\xi_{\s}: C\lr {'C}$ are infinitesimal Hopf algebra maps, then $\zeta_{\s}S=S^{-1*}\zeta_{\s}$ and $\xi_{\s}S=S^{-1*}\xi_{\s}$. For all $a, b\in C$,
 \begin{eqnarray*}
 \s(a, S(b))
 &=& \langle \zeta_{\s}(S(b)), a \rangle = \langle S^{-1*}(\zeta_{\s}(b)), a \rangle \\
 &=& \langle \zeta_{\s}(b), S^{-1}(a) \rangle =\s(S^{-1}(a), b),
 \end{eqnarray*}
 \begin{eqnarray*}
 \s(S(a), b)
 &=& \langle \xi_{\s}(S(a)), b \rangle = \langle S^{-1*}(\xi_{\s}(a)), b \rangle \\
 &=& \langle \xi_{\s}(a), S^{-1}(b) \rangle =\s(a, S^{-1}(b)),
 \end{eqnarray*}
 \begin{eqnarray*}
 &&\s(S\o S)=\s(id\o S)(S\o id)=\s(S^{-1}\o id)(S\o id)=\s
 \end{eqnarray*}
 and
 \begin{eqnarray*}
 &&\s(S^{-1}\o S^{-1})=\s(S^{-1}\o id)(id\o S^{-1})=\s(id\o S)(id\o S^{-1})=\s,
 \end{eqnarray*}
 finishing the proof.    \end{proof}

\begin{defi}\label{de:5.14}
 {\rm A {\bf double coalgebra} is a pair $(C, D)$ of coassociative coalgebras together with a left $D$-comodule on $C$: $C\lr D\o C$, $c\mapsto c_{-1}\o c_{0}$ and a right $C$-comodule on $D$: $D\lr D\o C$, $d\mapsto d_{(0)}\o d_{(1)}$ such that
 \begin{equation}\label{eq:5.11}
 c_{-1}\o c_{01}\o c_{02}=c_{1-1}\o c_{10}\o c_2+c_{-1(1)}\o c_{-1(0)}\o c_{0},
 \end{equation}
 \begin{equation}\label{eq:5.12}
 d_{(0)1}\o d_{(0)2}\o d_{(1)}=d_{1}\o d_{2(0)}\o d_{2(1)}+d_{(0)}\o d_{(1)-1}\o d_{(1)0},
 \end{equation}
 where $c\in C$ and $d\in D$.}
\end{defi}

\begin{thm}\label{thm:5.15}
 Given a double coalgebra $(C, D)$. Define the comultiplication $\D: C\o D\lr C\o D\o C\o D$ by
 \begin{equation}\label{eq:5.13}
 \D(c\o d)=c_{1}\o c_{2-1}\o c_{20}\o d+c\o d_{1(0)}\o d_{1(1)}\o d_{2}.
 \end{equation}
 Then $(C\o D, \D)$ is a coassociative coalgebra.
\end{thm}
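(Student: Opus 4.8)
The plan is to verify coassociativity directly by computing $(\D\o id_{C\o D})\ci \D$ and $(id_{C\o D}\o \D)\ci \D$ and checking they agree, using the two double-coalgebra compatibility conditions \eqref{eq:5.11} and \eqref{eq:5.12} to rewrite the mixed terms. Since $\D(c\o d)$ in \eqref{eq:5.13} is a sum of two pieces---one built from the $D$-comodule structure on $C$ (the $c_1\o c_{2-1}\o c_{20}\o d$ term) and one from the $C$-comodule structure on $D$ (the $c\o d_{1(0)}\o d_{1(1)}\o d_2$ term)---each side of the coassociativity equation will expand into four terms once $\D$ is applied twice. First I would apply $\D$ to each of the two tensor factors $c\o d$ appearing in \eqref{eq:5.13}, being careful that the comultiplication acts on the pair $(c,d)$ sitting in positions $1$ and $4$, while the middle factors in $D\o C$ come from the comodule maps and are inert under a further application of $\D$.

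The key bookkeeping step is to organize the eight resulting terms on each side into three types: \emph{pure-$C$} terms (where coassociativity of $C$ together with the iterated left $D$-comodule structure is used), \emph{pure-$D$} terms (where coassociativity of $D$ and the iterated right $C$-comodule structure is used), and \emph{mixed} terms that straddle the two halves of \eqref{eq:5.13}. The pure-$C$ and pure-$D$ terms will match on the two sides simply by coassociativity of the respective coalgebras and coassociativity of the comodule coactions. The genuine content lies in the mixed terms, and this is exactly where \eqref{eq:5.11} and \eqref{eq:5.12} enter: \eqref{eq:5.11} expresses how a second coaction on $C$ decomposes into a part coming from $\D_C$ and a part coming from the $D$-coaction reappearing via the right $C$-comodule on $D$, and dually for \eqref{eq:5.12}. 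Applying \eqref{eq:5.11} to the term $c_1\o c_{2-1}\o c_{20}\o d$ (after a further $\D$ on the $(c,d)$ slots) produces precisely the cross term that, on the other side, arises from \eqref{eq:5.12} applied to $c\o d_{1(0)}\o d_{1(1)}\o d_2$.

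I expect the main obstacle to be purely notational rather than conceptual: the Sweedler-index collisions between the ordinary coproducts $\D_C,\D_D$, the left $D$-coaction $c\mapsto c_{-1}\o c_0$ on $C$, and the right $C$-coaction $d\mapsto d_{(0)}\o d_{(1)}$ on $D$ make it easy to mismatch indices, especially where the comodule coassociativity axioms (the analogues of $(\D_D\o id)\r = (id\o \r)\r$ for the two coactions) must be invoked to line up the outer factors. The strategy to control this is to write out both $(\D\o id)\D(c\o d)$ and $(id\o \D)\D(c\o d)$ as explicit sums of four terms each before substituting, then match term-by-term, reserving \eqref{eq:5.11} and \eqref{eq:5.12} for the single mixed term on each side. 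Once the mixed terms are reconciled via these two identities and the remaining terms matched by coassociativity of $C$, of $D$, and of the two coactions, the two iterated coproducts coincide, establishing that $(C\o D,\D)$ is coassociative.
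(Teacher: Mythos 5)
Your overall route is the same as the paper's: expand $(\D\o id_{C\o D})\ci\D$ and $(id_{C\o D}\o\D)\ci\D$ into four terms each and match them, using coassociativity of $C$ and of $D$, coassociativity of the two coactions, and Eqs.~(\ref{eq:5.11}) and (\ref{eq:5.12}) once each. However, one step of your plan, as stated, is wrong and would make the computation fail: the claim that the iterated comultiplication ``acts on the pair $(c,d)$ sitting in positions $1$ and $4$, while the middle factors in $D\o C$ \dots are inert under a further application of $\D$.'' The map $\D\o id_{C\o D}$ applies $\D$ to the $C\o D$ pair occupying positions $1$ \emph{and} $2$, so the comodule legs are not inert: $\D(c_1\o c_{2-1})$ contains, besides $c_{11}\o c_{12-1}\o c_{120}\o c_{2-1}$, the summand $c_1\o c_{2-11(0)}\o c_{2-11(1)}\o c_{2-1(2)}$, in which the coproduct of $D$ and the right $C$-coaction hit the middle leg $c_{2-1}$; likewise $\D(c\o d_{1(0)})$ comultiplies and coacts on the middle leg $d_{1(0)}$, and symmetrically $id_{C\o D}\o\D$ acts on positions $3$ and $4$ and hits the $C$-leg in position $3$. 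Under your inertness assumption, two of the four terms on each side are lost, and these are precisely the terms that make the proof work: they supply both the input of Eq.~(\ref{eq:5.12}) (the twice-$D$-half term $c\o d_{1(0)1(0)}\o d_{1(0)1(1)}\o d_{1(0)2}\o d_{1(1)}\o d_2$) and the partners of the cross terms that Eqs.~(\ref{eq:5.11})--(\ref{eq:5.12}) produce, after relabelling with the comodule coassociativity $c_{-11}\o c_{-12}\o c_{0}=c_{-1}\o c_{0-1}\o c_{00}$ and its right-comodule analogue. Drop them and the two iterated coproducts simply do not agree.

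This error also inverts your sorting of the terms. The genuinely ``straddling'' term $c_1\o c_{2-1}\o c_{20}\o d_{1(0)}\o d_{1(1)}\o d_2$ (first half of Eq.~(\ref{eq:5.13}) followed by the second half, in either order) occurs verbatim on both sides and needs no identity at all; it is the \emph{twice-same-half} terms that require the compatibility conditions: Eq.~(\ref{eq:5.12}) splits the twice-$D$-half term on the $(\D\o id_{C\o D})$ side, and Eq.~(\ref{eq:5.11}) performs the corresponding recombination of the twice-$C$-half terms, the resulting cross terms cancelling the middle-leg terms described above --- this is visible already in the shape of Eq.~(\ref{eq:5.11}), which trades $\D_C$ applied to $c_0$ against the iterated coaction plus a term involving the right $C$-coaction on $c_{-1}$. (A minor inconsistency: you first say, correctly, that each side expands into four terms, but later refer to ``the eight resulting terms on each side''; the count is four before, and five after, the single application of each compatibility condition, exactly as in the paper's computation.) Once the action of $\D\o id_{C\o D}$ and $id_{C\o D}\o\D$ on the middle legs is corrected and the terms re-sorted accordingly, your plan becomes precisely the paper's proof.
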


\begin{proof} For all $c\in C$ and $d\in D$, we have
 \begin{eqnarray*}
 (\D\o id)\D(c\o d)
 &=&c_{11}\o c_{12-1}\o c_{120}\o c_{2-1}\o c_{20}\o d\\
 &&+c_{1}\o c_{2-11(0)}\o c_{2-11(1)}\o c_{2-1(2)}\o c_{20}\o d\\
 &&+c_{1}\o c_{2-1}\o c_{20}\o d_{1(0)}\o d_{1(1)}\o d_{2}\\
 &&+c\o d_{1(0)1(0)}\o d_{1(0)1(1)}\o d_{1(0)2}\o d_{1(1)}\o d_{2}\\
 &=&c_{1}\o c_{2-1}\o c_{20}\o c_{3-1}\o c_{30}\o d\\
 &&+c_{1}\o c_{2-1(0)}\o c_{2-1(1)}\o c_{20-1}\o c_{200}\o d\\
 &&+c_{1}\o c_{2-1}\o c_{20}\o d_{1(0)}\o d_{1(1)}\o d_{2}\\
 &&+c\o d_{1(0)1(0)}\o d_{1(0)1(1)}\o d_{1(0)2}\o d_{1(1)}\o d_{2}\\
 &\stackrel{(\ref{eq:5.12})}{=}&c_{1}\o c_{2-1}\o c_{20}\o c_{3-1}\o c_{30}\o d\\
 &&+c_{1}\o c_{2-1(0)}\o c_{2-1(1)}\o c_{20-1}\o c_{200}\o d\\
 &&+c_{1}\o c_{2-1}\o c_{20}\o d_{1(0)}\o d_{1(1)}\o d_{2}\\
 &&+c\o d_{11(0)}\o d_{11(1)}\o d_{12(0)}\o d_{12(1)}\o d_{2}\\
 &&+c\o d_{1(0)(0)}\o d_{1(0)(1)}\o d_{1(1)-1}\o d_{1(1)0}\o d_{2}\\
 &=&c_{1}\o c_{2-1}\o c_{20}\o c_{3-1}\o c_{30}\o d\\
 &&+c_{1}\o c_{2-1(0)}\o c_{2-1(1)}\o c_{20-1}\o c_{200}\o d\\
 &&+c_{1}\o c_{2-1}\o c_{20}\o d_{1(0)}\o d_{1(1)}\o d_{2}\\
 &&+c\o d_{1(0)}\o d_{1(1)}\o d_{2(0)}\o d_{2(1)}\o d_{3}\\
 &&+c\o d_{1(0)}\o d_{1(0)1}\o d_{1(1)2-1}\o d_{1(1)20}\o d_{2}\\
 &=&c_{1}\o c_{21-1}\o c_{210}\o c_{22-1}\o c_{220}\o d\\
 &&+c_{1}\o c_{2-1(0)}\o c_{2-1(1)}\o c_{20-1}\o c_{200}\o d\\
 &&+c_{1}\o c_{2-1}\o c_{20}\o d_{1(0)}\o d_{1(1)}\o d_{2}\\
 &&+c\o d_{1(0)}\o d_{1(1)}\o d_{2(0)}\o d_{2(1)}\o d_{3}\\
 &&+c\o d_{1(0)}\o d_{1(0)1}\o d_{1(1)2-1}\o d_{1(1)20}\o d_{2}\\
 &\stackrel{(\ref{eq:5.11})}{=}&c_{1}\o c_{2-1}\o c_{201}\o c_{202-1}\o c_{2020}\o d\\
 &&+c_{1}\o c_{2-1(0)}\o c_{2-1(1)}\o c_{20-1}\o c_{200}\o d\\
 &&+c_{1}\o c_{2-1}\o c_{20}\o d_{1(0)}\o d_{1(1)}\o d_{2}\\
 &&+c\o d_{1(0)}\o d_{1(1)}\o d_{2(0)}\o d_{2(1)}\o d_{3}\\
 &&+c\o d_{1(0)}\o d_{1(0)1}\o d_{1(1)2-1}\o d_{1(1)20}\o d_{2}\\
 &=&(id\o \D)\D(c\o d),
 \end{eqnarray*}
 finishing the proof.    \end{proof}

\begin{lem}\label{lem:5.16}
 Consider the maps:
 \begin{equation}\label{eq:5.14}
 \r_l: C\lr {'C}\o C, ~~c\mapsto e^{i}\o e_{i}c,
 \end{equation}
 \begin{equation}\label{eq:5.15}
 \r_r: {'C}\lr {'C}\o C, ~~c\mapsto -fe^{i}\o e_{i},
 \end{equation}
 where $c\in C, f\in {'C}$, $\{e_i\}$ is the basis of $C$ and $\{e^i\}$ is the dual basis.
 Then $(C, {'C})$ is a double coalgebra.
\end{lem}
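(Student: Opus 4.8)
The plan is to unwind every structure into statements about $\s$ and the multiplication $\mu_{\s}$, and then to verify the four required axioms---coassociativity of $\r_l$, coassociativity of $\r_r$, and the two compatibility identities (\ref{eq:5.11}) and (\ref{eq:5.12})---by direct computation in Sweedler notation, the only tools being the associativity of $\mu_{\s}$, the coderivation property of $\D$, and the dual-basis identity $\sum_i e^i(v)\,e_i=v$. First I would fix notation: by Remark \ref{rmk:5.8}, since $(C,\s)$ is coquasitriangular, $(C,\mu_{\s},\D)$ is a finite dimensional infinitesimal bialgebra with product $cd:=\mu_{\s}(c\o d)=c_1\s(c_2,d)-\s(c,d_1)d_2$, so $\D$ is a $\mu_{\s}$-coderivation, $(cd)_1\o(cd)_2=c_1\o c_2d+cd_1\o d_2$. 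Hence ${'C}=(C^*,-\D^{*op},\mu_{\s}^{*cop})$ is an infinitesimal bialgebra, whose two operations I would record by pairing against $c\in C$: the product satisfies $\langle f\ast g,c\rangle=-g(c_1)f(c_2)$, and the coproduct $\D_{{'C}}(f)=f_1\o f_2$ satisfies $\langle f_1\o f_2,c\o d\rangle=f(dc)$. These formulas, together with $\sum_i e^i(v)e_i=v$, are the computational engine of the proof; note also that pairing the ${'C}$-slot of $\r_r(f)=-\sum_i(f\ast e^i)\o e_i$ against $c$ yields $f(c_2)c_1$, and pairing the ${'C}$-slot of $\r_l(c)=\sum_i e^i\o e_ic$ against $b$ yields $bc$.

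Next I would verify the two comodule axioms. For $\r_l$ I would pair the two ${'C}$-slots of $(\D_{{'C}}\o id)\r_l(c)$ and of $(id\o\r_l)\r_l(c)$ against test elements $a,b\in C$: using the coproduct formula on ${'C}$ the first collapses to $(ba)c$ and the second to $b(ac)$, so left-comodule coassociativity is exactly the associativity of $\mu_{\s}$. For $\r_r$ I would pair the single ${'C}$-slot of $(\r_r\o id)\r_r(f)$ and of $(id\o\D)\r_r(f)$ against $c\in C$ and find that, via $\sum_i e^i(v)e_i=v$ and the coassociativity of $\D$, both sides collapse to $f(c_3)\,c_1\o c_2$; thus $\r_r$ is a right $C$-comodule.

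Finally the two compatibility identities. For (\ref{eq:5.12}) I would expand $\r_r$ and $\D_{{'C}}$ and pair the two ${'C}$-slots against $a,b$, reducing the three tensors to the single scalar-valued identity $(ba)_1\,f((ba)_2)=b_1\,f(b_2a)+(ba_1)\,f(a_2)$, which is precisely the coderivation rule $\D(ba)=b_1\o b_2a+ba_1\o a_2$ read off under $f$. For (\ref{eq:5.11}) I would expand $\D(c_0)=\D(e_ic)$ by the same rule as $(e_i)_1\o(e_i)_2c+e_ic_1\o c_2$: the second summand is visibly $\r_l(c_1)\o c_2$, the first term on the right of (\ref{eq:5.11}), while the first summand is matched against the remaining term after rewriting $\r_r$ on the ${'C}$-slot through $\langle f\ast g,c\rangle$ and collapsing with the dual basis. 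The one point that needs genuine care here is the placement of the $C$- and ${'C}$-factors inside that last term, where the bookkeeping is most delicate.

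I expect the real obstacle to be organizational rather than conceptual: keeping rigorous track of which tensor slot lies in $C$ and which in ${'C}$, fixing the signs in the dualized product and coproduct on ${'C}$, and threading the contractions $\sum_i e^i(v)e_i=v$ correctly through each identity. Once the two pairing formulas for ${'C}$ are in place, every axiom collapses either to the associativity of $\mu_{\s}$ or to the coderivation property of $\D$, so beyond the associativity of $\mu_{\s}$ that (\ref{eq:5.4})--(\ref{eq:5.5}) guarantee---equivalently, the fact that $(C,\mu_{\s},\D)$ is an infinitesimal bialgebra---no further use of the coquasitriangular relations is needed.
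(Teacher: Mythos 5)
Your proposal is correct and takes essentially the same route as the paper: both proofs verify the two comodule axioms and the compatibilities (\ref{eq:5.11})--(\ref{eq:5.12}) by dual-basis pairing, collapsing everything to the associativity of the product and the coderivation identity $\Delta(ba)=b_1\o b_2a+ba_1\o a_2$ (the paper phrases a few steps through the infinitesimal bialgebra structure of ${'C}$ quoted from Aguiar, which is just the dualized form of the same facts you use directly on $C$). The only caveat is cosmetic: the lemma, as invoked in Theorem \ref{thm:5.17}, concerns an arbitrary finite-dimensional infinitesimal bialgebra rather than one of the form $\mu_{\sigma}$ coming from a coquasitriangular structure, but since you observe yourself that only the infinitesimal bialgebra axioms are used, your argument covers the general case.
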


\begin{proof} For all $c\in C$ and $f, g, h\in {'C}$,  we have
 \begin{eqnarray*}
 (id\o \r_{l})\r_{l}(c)
 &=&e^{i}\o e^{j}\o e_{j}e_{i}c\\
 &=&{e^{i}}_2\o {e^{i}}_1\o e_{i}c=(\D\o id)\r_{l}(c),
 \end{eqnarray*}
 so $(C, \r_l)$ is a left ${'C}$-comodule. Also
 \begin{eqnarray*}
 (\r_{r}\o id)\r_{r}(f)
 &=&fe^{i}e^{j}\o e_{j}\o e_{i}\\
 &=&-fe^{i}\o {e_{i1}}\o {e_{i2}}=(id\o \D)\r_{r}(f),
 \end{eqnarray*}
 so $({'C}, \r_r)$ is a right $C$-comodule.

 Applying $id\o g\o h$ to $fe^{i}e^{j}\o e_{j}\o e_{i}$,
 \begin{eqnarray*}
 fe^{i}e^{j} \langle g, e_{j} \rangle  \langle h, e_{i} \rangle
 &=&fhg\\
 &=&fe^{i} \langle -\D^{*op}(h\o g), e_{i} \rangle \\
 &=&-fe^{i} \langle \D^{*op}(h\o g), e_{i} \rangle \\
 &=&-fe^{i} \langle g, e_{i1} \rangle  \langle h, e_{i2} \rangle )
 \end{eqnarray*}

 Since
 \begin{eqnarray*}
 c_{-1}\o c_{01}\o c_{02}
 &=&e^{i}\o (e_{i}c)_1\o (e_{i}c)_2\\
 &=&e^{i}\o e_{i}c_1\o c_2+e^{i}\o e_{i1}\o e_{i2}c
 \end{eqnarray*}
 and
 \begin{eqnarray*}
 &&c_{1-1}\o c_{10}\o c_2+c_{-1(1)}\o c_{-1(0)}\o c_{0}\\
 &&~~~~~~~=e^{i}\o e_{i}c_1\o c_2-e^{i}e^{j}\o e_{j}\o e_{i}c,
 \end{eqnarray*}
 while for all $c'\in C$,
 \begin{eqnarray*}
  \langle e^{i}, c' \rangle e_{i1}\o e_{i2}c
 &=&c'_{1}\o c'_{2}c\\
 &=& \langle e^{j}, c'_{1} \rangle  \langle e^{i}, c'_{2} \rangle e_{j}\o e_{i}c\\
 &=& \langle -e^{i}e^{j}, c' \rangle e_{j}\o e_{i}c,
 \end{eqnarray*}
 thus Eq.(\ref{eq:5.11}) holds.

 Since
 \begin{eqnarray*}
 d_{(0)1}\o d_{(0)2}\o d_{(1)}
 &=&-(de^{i})_1\o (de^{i})_2\o e_{i}\\
 &=&-d{e^{i}}_1\o {e^{i}}_2\o e_{i}-d_1\o d_2e^{i}\o e_{i}
 \end{eqnarray*}
 and
 \begin{eqnarray*}
 &&d_{1}\o d_{2(0)}\o d_{2(1)}+d_{(0)}\o d_{(1)-1}\o d_{(1)0}\\
 &&~~~~~~~=-d_1\o d_2e^{i}\o e_{i}-de^{i}\o e^j\o e_je_{i}
 \end{eqnarray*}
 while for all $c', d'\in C$,
 \begin{eqnarray*}
  \langle d{e^{i}}_1, c' \rangle  \langle {e^{i}}_2, d' \rangle e_{i}
 &=&- \langle d, {c'}_{2} \rangle  \langle {e^{i}}_1, {c'}_{1} \rangle  \langle {e^{i}}_2, d' \rangle e_{i}\\
 &=&- \langle d, {c'}_{2} \rangle  \langle e^{i}, d'{c'}_{1} \rangle e_{i}\\
 &=&- \langle d, {c'}_{2} \rangle d'{c'}_{1}\\
 &=&- \langle d, {c'}_{2} \rangle  \langle e^i, {c'}_{1} \rangle d'e_i\\
 &=& \langle de^i, c' \rangle d'e_i\\
 &=& \langle de^i, c' \rangle  \langle e^{j}, d' \rangle e_je_i,
 \end{eqnarray*}
 so Eq.(\ref{eq:5.12}) holds.
 Therefore the proof is finished.   \end{proof}

\begin{thm}\label{thm:5.17}
 Let $C$ be a finite dimensional infinitesimal bialgebra. Consider the linear space
 $$
 \widehat{D(C)}:=C\o {'C}.
 $$
 Let $\{e_i\}$ be the basis of $C$ and $\{e^i\}$ the dual basis of ${'C}$. Define
 \begin{eqnarray}
 \D(c\o f)
 &=&c_{1}\o c_{2-1}\o c_{20}\o f+c\o f_{1(0)}\o f_{1(1)}\o f_{2}  \nonumber\\
 &=&c_1\o e^i\o e_ic_2\o f-c\o f_1e^i\o e_i\o f_2.   \label{eq:5.16}
 \end{eqnarray}
 Then $\widehat{D(C)}$ is a coalgebra.

 Furthermore, define $\s\in (\widehat{D(C)}\o \widehat{D(C)})^*$ by
 \begin{equation}\label{eq:5.17}
 \s(c\o f, d\o g)= \langle g, c \rangle  \langle f, d \rangle .
 \end{equation}
 Then $\s$ is a solution of coassociative Yang-Baxter equation. i.e., $(\widehat{D(C)}, \s)$ is a coquasitriangular infinitesimal bialgebra. Moreover
 \begin{equation}\label{eq:5.18}
 \mu_{\s}(c\o f, d\o g)=-c\o f_1 \langle f_2, d \rangle g-c \langle f, d_1 \rangle d_2\o g,
 \end{equation}
 where $c, d\in C$ and $f, g\in {'C}$.
\end{thm}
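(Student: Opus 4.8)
The plan is to dispatch the three assertions in turn, leaning on the double-coalgebra results already in hand. For the coalgebra claim I would argue that $\widehat{D(C)}$ is nothing but the double coalgebra of Theorem~\ref{thm:5.15} applied to $D = {'C}$: Lemma~\ref{lem:5.16} already exhibits $(C, {'C})$ as a double coalgebra with comodule structures $\r_l(c) = e^i \o e_i c$ and $\r_r(f) = -fe^i \o e_i$, and substituting these into the general formula \eqref{eq:5.13} rewrites $c_{2-1}\o c_{20}$ as $e^i \o e_i c_2$ and $f_{1(0)}\o f_{1(1)}$ as $-f_1 e^i \o e_i$, which is precisely the second line of \eqref{eq:5.16}. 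Thus coassociativity of $\D$ needs no fresh computation.

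For the Yang-Baxter claim I would verify \eqref{eq:5.3} head-on. Writing $X = c\o f$, $Y = d\o g$, $Z = e\o h$ and using the pairing $\s(a\o p, b\o q) = \langle q, a\rangle\langle p, b\rangle$, I would expand the three products $\s(X_1, Z)\s(X_2, Y)$, $\s(X, Y_1)\s(Y_2, Z)$ and $\s(Y, Z_1)\s(X, Z_2)$ by feeding \eqref{eq:5.16} into each. Every one of the six resulting monomials is then flattened by the two dual-basis contractions $\sum_i \langle e^i, x\rangle e_i = x$ and $\sum_i \langle p, e_i\rangle e^i = p$; the leftover products $e_i c_2$ and $f_1 e^i$ are evaluated through the multiplication of $C$ and through the multiplication $-\D^{*op}$ of ${'C}$, while the split legs of $f, g, h$ are reassembled through the comultiplication $\mu^{*cop}$ of ${'C}$. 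The target is to watch the six monomials annihilate pairwise.

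I expect the genuine difficulty to be exactly this last bookkeeping. One must track, for each operation, which of the two tensor factors of $\widehat{D(C)}$ it acts on, and above all carry the signs dictated by the definition ${'C} = (C^*, -\D^{*op}, \mu^{*cop})$, namely $\langle pq, e\rangle = -\langle q, e_1\rangle\langle p, e_2\rangle$ for the product and $\langle p_1, a\rangle\langle p_2, b\rangle = \langle p, ba\rangle$ for the coproduct. Keeping these signs and the co-opposite ordering consistent is the crux; the contractions themselves are routine. A possible shortcut is Remark~\ref{rmk:5.8}: since $\s$ solves \eqref{eq:5.3} if and only if $(\widehat{D(C)}, \mu_{\s})$ is an infinitesimal bialgebra, one may instead prove associativity of $\mu_{\s}$, which is equivalent by Proposition~\ref{pro:5.11}.

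Lastly, the formula \eqref{eq:5.18} for $\mu_{\s}$ I would read off from $\mu_{\s}(X, Y) = X_1\,\s(X_2, Y) - \s(X, Y_1)\,Y_2$. After inserting \eqref{eq:5.16}, the first summand of $X_1\,\s(X_2, Y)$ and the second summand of $\s(X, Y_1)\,Y_2$ both reduce, via $\sum_i e^i\langle g, e_i c_2\rangle$ viewed as the functional $x \mapsto \langle g, x c_2\rangle$, to the same term $\langle f, d\rangle\,c_1 \o (x \mapsto \langle g, x c_2\rangle)$ and cancel, leaving exactly $-\langle f_2, d\rangle\,c\o f_1 g - \langle f, d_1\rangle\,c d_2 \o g$, i.e. \eqref{eq:5.18}. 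This computation is purely mechanical once the cancellation is noticed.
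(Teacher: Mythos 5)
Your proposal is correct and takes essentially the paper's own route: the paper's proof likewise obtains the coalgebra structure on $\widehat{D(C)}$ by feeding Lemma~\ref{lem:5.16} into Theorem~\ref{thm:5.15} and declares the remaining verification straightforward, which is exactly the computation you outline and, for \eqref{eq:5.18}, carry out correctly with the right sign conventions for ${'C}$, including the cancellation of the two $\langle f, d\rangle\, c_1 \o \bigl(x \mapsto \langle g, x c_2\rangle\bigr)$ terms. One small slip in your optional shortcut: the equivalence between $\s$ solving CAYB and associativity of $\mu_{\s}$ comes from Remark~\ref{rmk:5.8} (together with $\mu_{\s}$ automatically being a coderivation), not from Proposition~\ref{pro:5.11}, which instead characterizes coquasitriangularity via conditions \eqref{eq:5.4}--\eqref{eq:5.5}.
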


\begin{proof}  By Lemma \ref{lem:5.16}, we get the first asseration. It is straightforward that Eq.(\ref{eq:5.18}) is a solution of CAYB in  $\widehat{D(C)}$. \end{proof}

\begin{defi}\label{de:5.18}
 {\rm Let $(C, \d_1, \d_2, \mu)$ be a covariant bialgebra. A right $C$-module and a right $C$-comodule $M$ is said to be a {\bf right covariant $C$-module} if  the action is a right covariant coderivation with respect to $\d_1$. Symmetrically, a left $C$-module and a left $C$-comodule $N$ is said to be a {\bf left covariant $C$-module} if  the action is a left covariant coderivation with respect to $\d_2$. A {\bf morphism of covariant modules} is a map that is both $C$-linear and $C$-colinear. The category of right (left) covariant $C$-modules is denoted by $\mathcal{M}^C_C (_C ^CM)$.}
\end{defi}

\begin{ex}\label{ex:5.19}
 Let $(C, \d_{\s}, \d_{\tau}, \mu)$ be a coquasitriangular covariant bialgebra associated to a CYBP $(\s, \tau)$. Then every right $C$-comodule $M$ is a right covariant $C$-module with the action
 \begin{equation}\label{eq:5.19}
 \tr_M: M\o C\lr M, m\o c\mapsto m_{(0)}\s(m_{(1)}, c).
 \end{equation}
 Similarly, every left $C$-comodule $N$ is a left covariant $C$-module with the action
 \begin{equation}\label{eq:5.20}
 \tl_M: C\o N\lr N, c\o n\mapsto -\tau(c, m_{(-1)})m_{(0)}.
 \end{equation}
\end{ex}

\begin{proof} For all $m\in M$ and $c\in C$,
 \begin{eqnarray*}
 m\tr_M c_1\o c_2+m_{(0)}\o m_{(1)}\di_1 c
 &\stackrel{}{=}&m_{(0)}\o m_{(1)1}\s(m_{(1)2}, c) \\
 &=&m_{(0)(0)}\o m_{(0)(1)}\s(m_{(1)}\\
 &=&\r(m\tr_M c).
 \end{eqnarray*}
 So the action is a right covariant coderivation with respect to $\d_{\s}$. In what follows, we will show that $(M, \tr_M)$ is a right $C$-module. For all $c, d\in C$ and $m\in M$, we have
 \begin{eqnarray*}
 (\tr_M\ci (id\o \mu))(m\o c\o d)
 &\stackrel{}{=}&m\tr_M (cd)\\
 &\stackrel{}{=}&m_{(0)}\s(m_{(1)}, cd)\\
 &\stackrel{(\ref{eq:5.1})}{=}&m_{(0)}\s(m_{(1)1}, d)\s(m_{(1)2}, c)\\
 &=&m_{(0)(0)}\s(m_{(0)(1)}, d)\s(m_{(1)}, c)\\
 &=&(\tr_M\ci (\tr_M\o id))(m\o c\o d).
 \end{eqnarray*}
 Therefore,  the first statement is verified. Similarly, we  prove the second one. \end{proof}

Next, we describe the relation between Rota-Baxter cosystems and dendriform coalgebras.

\begin{defi}\label{de:6.1}
 \emph{(\cite{Fo})} {\rm A triple $(C, \D_{\rc}, \D_{\lc})$ consisting of a vector space $C$ and two $K$-linear operators $\D_{\rc}, \D_{\lc}: C \lr C\o C$ is called a {\bf dendriform coalgebra} if, for all $c \in C$,
 \begin{eqnarray}
 (\D_{\rc}\o id)\ci \D_{\rc}(c)&=&(id\o \D_{\rc}+id\o \D_{\lc})\ci \D_{\rc}(c), \label{eq:6.1}\\
 (\D_{\lc}\o id)\ci \D_{\rc}(c)&=&(id\o \D_{\rc})\ci \D_{\lc}(c), \label{eq:6.2}\\
 (\D_{\rc}\o id+\D_{\lc}\o id)\ci \D_{\lc}(c)&=&(id\o \D_{\lc})\ci \D_{\lc}(c). \label{eq:6.3}
 \end{eqnarray}}
\end{defi}

\begin{thm}\label{thm:6.2}
 Let $C$ be a coassociative coalgebra and $Q, T: C\lr C$ be $K$-linear maps. Define the $K$-linear maps $\D_{\rc}, \D_{\lc}: C\lr C\o C$ by
 \begin{equation}\label{eq:6.4}
 \D_{\rc}(c)=c_1\o T(c_2),~~~\D_{\lc}(c)=Q(c_1)\o c_2
 \end{equation}
 for all $c\in C$. Then:

 (1) If $(C, Q, T)$ is a Rota-Baxter cosystem, then $(C, \D_{\rc}, \D_{\lc})$ is a dendriform coalgebra.

 (2) If $C$ is a non-degenerate coalgebra and $(C, \D_{\rc}, \D_{\lc})$ is a dendriform coalgebra, then $(C, Q, T)$ is a Rota-Baxter cosystem.
\end{thm}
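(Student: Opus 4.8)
The plan is to prove both directions by substituting the definitions (\ref{eq:6.4}) of $\D_{\rc}$ and $\D_{\lc}$ into the three dendriform axioms (\ref{eq:6.1})--(\ref{eq:6.3}), expanding everything in Sweedler notation, and using coassociativity to align the iterated coproducts. First I would record the one-step expansions. Since $\D_{\rc}(c)=c_1\o T(c_2)$ and $\D_{\lc}(c)=Q(c_1)\o c_2$, coassociativity gives $(\D_{\rc}\o id)\ci\D_{\rc}(c)=c_1\o T(c_2)\o T(c_3)$, then $(\D_{\lc}\o id)\ci\D_{\rc}(c)=Q(c_1)\o c_2\o T(c_3)=(id\o \D_{\rc})\ci\D_{\lc}(c)$, and $(id\o \D_{\lc})\ci\D_{\lc}(c)=Q(c_1)\o Q(c_2)\o c_3$, together with the analogous expansions of the remaining terms. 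In particular, axiom (\ref{eq:6.2}) is then seen to hold automatically, using only coassociativity and no hypothesis on $Q,T$; this explains why the Rota-Baxter cosystem needs only the two relations (\ref{eq:3.1}) and (\ref{eq:3.2}).

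For part (1) the heart is matching the two nontrivial axioms with the two cosystem relations. Expanding the right-hand side of (\ref{eq:6.1}) gives $c_1\o(T(c_2)_1\o T(T(c_2)_2)+Q(T(c_2)_1)\o T(c_2)_2)$, which is exactly the right-hand side of (\ref{eq:3.2}) applied to the element $c_2$ and prefixed by $c_1$; since the left-hand side of (\ref{eq:6.1}) is $c_1\o T(c_2)\o T(c_3)=c_1\o T((c_2)_1)\o T((c_2)_2)$, axiom (\ref{eq:6.1}) reduces precisely to (\ref{eq:3.2}). Symmetrically, the left-hand side of (\ref{eq:6.3}) is $Q(Q(c_1)_1)\o Q(c_1)_2\o c_2+Q(c_1)_1\o T(Q(c_1)_2)\o c_2$, namely the right-hand side of (\ref{eq:3.1}) applied to $c_1$ and post-fixed by $c_2$, while the right-hand side of (\ref{eq:6.3}) is $Q(c_1)\o Q(c_2)\o c_3=Q((c_1)_1)\o Q((c_1)_2)\o c_2$; hence (\ref{eq:6.3}) reduces to (\ref{eq:3.1}). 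Thus (\ref{eq:3.1}) yields (\ref{eq:6.3}) and (\ref{eq:3.2}) yields (\ref{eq:6.1}), which proves (1).

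For part (2) I would run the same identifications backwards. From the computation above, (\ref{eq:6.1}) is equivalent to $c_1\o h(c_2)=0$ for all $c$, where $h(x)=T(x_1)\o T(x_2)-Q(T(x)_1)\o T(x)_2-T(x)_1\o T(T(x)_2)$ is the defect of (\ref{eq:3.2}), and (\ref{eq:6.3}) is equivalent to $f(c_1)\o c_2=0$ for all $c$, where $f(x)=Q(x_1)\o Q(x_2)-Q(Q(x)_1)\o Q(x)_2-Q(x)_1\o T(Q(x)_2)$ is the defect of (\ref{eq:3.1}). The goal is then to conclude $h=0$ and $f=0$.

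I expect the main obstacle to be that the non-degeneracy hypothesis is stated for endomorphisms $C\lr C$, whereas $h$ and $f$ take values in $C\o C$. I would bridge this by composing with functionals: for each $\vp\in C^*$ set $h_{\vp}=(id\o \vp)\ci h$ and $f_{\vp}=(\vp\o id)\ci f$, which are genuine maps $C\lr C$. Applying $id\o id\o \vp$ to $c_1\o h(c_2)=0$ gives $c_1\o h_{\vp}(c_2)=0$, and applying $\vp\o id\o id$ to $f(c_1)\o c_2=0$ gives $f_{\vp}(c_1)\o c_2=0$; non-degeneracy then forces $h_{\vp}=0$ and $f_{\vp}=0$ for every $\vp$. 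Since $C^*$ separates points of $C$, this yields $h=0$ and $f=0$, i.e. (\ref{eq:3.1}) and (\ref{eq:3.2}) hold, so $(C,Q,T)$ is a Rota-Baxter cosystem. I anticipate this functional-reduction step to be the only genuinely delicate point, the rest being bookkeeping with coassociativity.
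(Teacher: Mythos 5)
Your proposal is correct and takes essentially the same approach as the paper: part (1) is the same term-by-term Sweedler reduction of Eqs.~(\ref{eq:6.1})--(\ref{eq:6.3}) to Eqs.~(\ref{eq:3.2}) and (\ref{eq:3.1}) (your version of the Eq.~(\ref{eq:6.3}) computation is in fact cleaner, since the paper's display misprints $Q(c_{11})\o Q(c_{12})$ as $T(c_{11})\o T(c_{12})$), and part (2), which the paper dismisses as ``straightforward by the proof of (1) and the non-degeneracy of $C$'', is precisely your identification of the dendriform axioms with $c_1\o h(c_2)=0$ and $f(c_1)\o c_2=0$ for the two defect maps. Your functional-reduction step ($h_{\vp}=(id\o \vp)\ci h$, $f_{\vp}=(\vp\o id)\ci f$, then separation of points by $C^*$) correctly supplies a detail the paper glosses over, since its notion of non-degeneracy is stated only for endomorphisms of $C$ while the defects take values in $C\o C$.
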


\begin{proof} (1) First, we prove that Eq.(\ref{eq:6.1}) holds as follows.
 \begin{eqnarray*}
 \hbox{RHS of Eq.(\ref{eq:6.1})}&\stackrel{}{=}&c_1\o T(c_2)_1\o T(T(c_2)_2)+c_1\o Q(T(c_2)_1)\o T(c_2)_2\\
 &\stackrel{(\ref{eq:3.2})}{=}&c_1\o T(c_{21})\o T(c_{22})\\
 &\stackrel{}{=}&c_{11}\o T(c_{12})\o T(c_{2})=\hbox{LHS of Eq.(\ref{eq:6.1})}.
 \end{eqnarray*}
 Eq.(\ref{eq:6.2}) can be checked easily by the coassociativity.
 \begin{eqnarray*}
 \hbox{LHS of Eq.(\ref{eq:6.3})}&\stackrel{}{=}&Q(Q(c_1)_1)\o Q(c_1)_2\o c_2+Q(c_1)_1\o T(Q(c_1)_2)\o c_2\\
 &\stackrel{(\ref{eq:3.1})}{=}&T(c_{11})\o T(c_{12})\o c_2\\
 &\stackrel{}{=}&T(c_{1})\o T(c_{21})\o c_{22}=\hbox{RHS of Eq.(\ref{eq:6.3})}.
 \end{eqnarray*}
 Thus $(C, \D_{\rc}, \D_{\lc})$ is a dendriform coalgebra.

 (2) It is straightforward by the proof of (1) and the non-degeneracy of $C$. \end{proof}

\begin{rmk}\label{rmk:5.22}
 In \cite{Fo}, the notions of dendriform bialgebra, codendriform bialgebra and bidendriform bialgebra were introduced by combining dendriform algebra and dendriform coalgebra structures. As mentioned above, Rota-Baxter (co)algebra can produce dendriform (co)algebra. Then an interesting question is whether there is a kind of Rota-Baxter ``bi"-algebras that lead to structures of dendriform bialgebra, codendriform bialgebra or bidendriform bialgebra.
\end{rmk}

\section{Examples of Rota-Baxter bialgebras and bisystems} \label{sec:ExamplesRBbialgbisyst}
In this section we provide further examples of Rota-Baxter bialgebras and bisystems.
\subsection{2-dimensional Examples}
\begin{ex}[2-dimensional bialgebra]
We consider the 2-dimensional bialgebra $K \times K$,
where  the multiplication is defined with respect to a basis $\{u_1,u_2\}$ by  $u_1\cdot u_i=u_i\cdot u_1=u_i$ for $i=1,2$ and $u_2\cdot u_2 =u_2$; and the comultiplication is defined by
$ \Delta(u_1)=u_1 \otimes u_1,\  \Delta(u_2)=u_2 \otimes u_2$. We set that $u_1$ is the unit and the counit is defined by
$ \varepsilon (u_1)=\varepsilon (u_2)=1$.
\begin{itemize}
\item The $(R,R)$-Rota-Baxter structures of weight $\lambda$ are given by
\begin{enumerate}
\item $R(u_1)=-\lambda u_1, \  R(u_2)=0, $
\item $R(u_1)=0, \  R(u_2)=-\lambda u_2, $
\item $R(u_1)=-\lambda u_1, \  R(u_2)=-\lambda u_2$.
\end{enumerate}

\item The $(R,Q)$-Rota-Baxter structures of weight $\lambda$ and $\gamma$ respectively are given by  the pairs $(R,Q)$  taken from the following list of $R$'s

\begin{enumerate}
\item $R(u_1)=0, \  R(u_2)=-\lambda u_2, $
\item $R(u_1)=-\lambda u_2, \  R(u_2)=-\lambda u_2, $
\item $R(u_1)=\lambda u_2, \  R(u_2)=0, $
\item $R(u_1)=0, \  R(u_2)=\lambda u_1-\lambda u_2, $
\item $R(u_1)=-2 \lambda u_1+\lambda u_2, \  R(u_2)=-\lambda u_1, $
\item $R(u_1)=-\lambda u_1, \  R(u_2)=0, $
\item $R(u_1)=-\lambda u_1, \  R(u_2)=-\lambda u_2, $
\item $R(u_1)=- \lambda u_1-\lambda u_2, \  R(u_2)=-\lambda u_2, $
\item $R(u_1)=- \lambda u_1+\lambda u_2, \  R(u_2)=0, $
\item $R(u_1)=-\lambda u_1, \  R(u_2)=-\lambda u_1, $
\item $R(u_1)= \lambda u_1-\lambda u_2, \  R(u_2)= \lambda u_1-\lambda u_2, $
\end{enumerate}
and the following list of $Q$'s
\begin{enumerate}
\item $Q(u_1)=0, \  Q(u_2)=-\gamma u_2,$
\item $Q(u_1)=-\gamma u_1, \  Q(u_2)=0,$
\item $Q(u_1)=-\gamma u_1, \  Q(u_2)=-\gamma u_2$.
\end{enumerate}

\item The bisystems are given by  the pairs $(R,S)$ and  pairs $(Q,T)$ taken from the following list for $(R,S)$

\begin{enumerate}
\item $R(u_1)=p_1u_2, \  R(u_2)=0, \ S(u_1)=p_1u_1+p_2u_2, \ S(u_2)=p_2 u_2, $
\item $R(u_1)=p_1u_2, \  R(u_2)=p_1u_2, \ S(u_1)=-p_2u_1+p_1u_2, \ S(u_2)=0, $
\item $R(u_1)=p_1u_2, \  R(u_2)=p_2u_2, \ S(u_1)=(p_1-p_2)u_1, \ S(u_2)=0, $
\item $R(u_1)=p_1u_1, \  R(u_2)=p_1u_1, \ S(u_1)=-p_2u_1+p_2u_2, \ S(u_2)=p_1u_1-p_1u_2, $
\item $R(u_1)=-p_1u_1+p_1u_2, \  R(u_2)=0, \ S(u_1)=p_2u_2, \ S(u_2)=p_2u_2, $
\item $R(u_1)=-p_1u_1+p_1u_2, \  R(u_2)=-p_1u_1+p_1u_2, \ S(u_1)=-(p_1+p_2)u_1+p_2u_2, \ S(u_2)=-p_1u_1, $
\item $R(u_1)=-p_1u_1+p_1u_2, \  R(u_2)=p_1u_1-p_1u_2, \ S(u_1)=p_1u_1, \ S(u_2)=p_1u_1, $
\item $R(u_1)=-(p_1+p_2)u_1+p_1u_2, \  R(u_2)=-p_2u_1, \ S(u_1)=-p_2u_1+p_2u_2, \ S(u_2)=-p_2u_1+p_2u_2, $
\item $R(u_1)=p_1u_1+p_2u_2, \  R(u_2)=p_2u_2, \ S(u_1)=p_1u_2, \ S(u_2)=0, $
\item $R(u_1)=p_1u_1, \  R(u_2)=0, \ S(u_1)=p_2u_2, \ S(u_2)=(-p_1+p_2)u_2, $
\item $R(u_1)=0, \  R(u_2)=-p_1u_2, \ S(u_1)=p_1u_1, \ S(u_2)=0, $
\item $R(u_1)=p_1u_2, \  R(u_2)=0, \ S(u_1)=p_1u_1-p_1u_2, \ S(u_2)=0, $
\item $R(u_1)=p_1u_2, \  R(u_2)=0, \ S(u_1)=p_1u_1-p_1u_2, \ S(u_2)=-p_1u_2, $
\item $R(u_1)=p_1u_2, \  R(u_2)=0, \ S(u_1)=p_1u_1, \ S(u_2)=0, $
\item $R(u_1)=p_1u_2, \  R(u_2)=0, \ S(u_1)=p_1u_1-p_1u_2, \ S(u_2)=0, $
\item $R(u_1)=p_1u_2, \  R(u_2)=0, \ S(u_1)=p_1u_1-p_1u_2, \ S(u_2)=-p_1u_2, $
\item $R(u_1)=p_1u_2, \  R(u_2)=p_1u_2, \ S(u_1)=p_1u_1-p_1u_2, \ S(u_2)=p_1u_1-p_1u_2, $
\item $R(u_1)=p_1u_2, \  R(u_2)=p_1u_2, \ S(u_1)=p_1u_1-p_1u_2, \ S(u_2)=p_1u_1-p_1u_2, $
\item $R(u_1)=p_1u_1-p_1u_2, \  R(u_2)=0, \ S(u_1)=p_1u_2, \ S(u_2)=0, $
\item $R(u_1)=p_1u_1-p_1u_2, \  R(u_2)=-p_1u_2, \ S(u_1)=p_1u_2, \ S(u_2)=0, $
\item $R(u_1)=p_1u_1, \  R(u_2)=0, \ S(u_1)=p_1u_2, \ S(u_2)=0, $
\item $R(u_1)=p_1u_1-p_1u_2, \  R(u_2)=0, \ S(u_1)=p_1u_2, \ S(u_2)=0, $
\end{enumerate}
where   $p_1,p_2$ are parameters, and the following list for $(Q,T)$

\begin{enumerate}
\item $Q(u_1)=0, \  Q(u_2)=q_1u_2, \ T(u_1)=q_2u_1, \ T(u_2)=0, $
\item $Q(u_1)=q_1u_1, \  Q(u_2)=0, \ T(u_1)=0, \ T(u_2)=q_2u_2, $
\item $Q(u_1)=q_1u_2, \  Q(u_2)=0, \ T(u_1)=0, \ T(u_2)=q_1u_2, $
\item $Q(u_1)=0, \  Q(u_2)=q_1u_1+q_1u_2, \ T(u_1)=q_1u_1+q_1u_2, \ T(u_2)=0, $
\item $Q(u_1)=0, \  Q(u_2)=q_1u_1, \ T(u_1)=q_1u_1, \ T(u_2)=0, $
\item $Q(u_1)=q_1u_1+q_1u_2, \  Q(u_2)=0, \ T(u_1)=0, \ T(u_2)=q_1u_1+q_1u_2, $

\end{enumerate}
where   $q_1,q_2$ are parameters.
\end{itemize}
\end{ex}

\subsection{3-dimensional and 4-dimensional Examples}
\begin{ex}[3-dimensional bialgebra]
We consider the 3-dimensional bialgebra $K \times K$,
where  the multiplication is defined with respect to a basis $\{u_1,u_2,u_3\}$ by  $u_1\cdot u_i=u_i\cdot u_1=u_i$ for $i=1,2,3$ and $u_2\cdot u_2 =u_2$,  $u_2\cdot u_3 =u_3\cdot u_2 =u_3$, $u_3\cdot u_3 =0$; and the comultiplication is defined by
$ \Delta(u_1)=u_1 \otimes u_1$, $  \Delta(u_2)=u_1 \otimes u_2+u_2 \otimes u_1-u_2 \otimes u_2$,  $ \Delta(u_3)=u_1 \otimes u_3+u_3 \otimes u_1-u_3 \otimes u_2$. We set that $u_1$ is the unit and the counit is defined by
$ \varepsilon (u_1)=1$, $ \varepsilon (u_2)=\varepsilon (u_3)=0$.
\begin{itemize}
\item The $(R,R)$-Rota-Baxter structures of weight $\lambda$ are given by

\begin{enumerate}
\item $R(u_1)=-\lambda u_1, \  R(u_2)=0,R(u_3)=0$,
\item $R(u_1)=0, \  R(u_2)=-\lambda u_2, \ R(u_3)=0$,
\item $R(u_1)=0, \  R(u_2)=0, \ R(u_3)=-\lambda u_3$,
\item $R(u_1)=-\lambda u_1, \  R(u_2)=-\lambda u_2,R(u_3)=0$,
\item $R(u_1)=0, \  R(u_2)=-\lambda u_2, \ R(u_3)=-\lambda u_3$,
\item $R(u_1)=-\lambda u_1, \  R(u_2)=0, \ R(u_3)=-\lambda u_3$,
\item $R(u_1)=-\lambda u_1, \  R(u_2)=-\lambda u_2,\  R(u_3)=-\lambda u_3 $,
\end{enumerate}

\item The $(R,Q)$-Rota-Baxter structures of weight $\lambda$ and $\gamma$ respectively are given by the  pairs $(R,Q)$  taken from the following list of $R$'s

\begin{enumerate}
\item $R(u_1)=0, \  R(u_2)=0,R(u_3)=-\lambda u_3$,
\item $R(u_1)=0, \  R(u_2)=-\lambda u_2,R(u_3)=0$,
\item $R(u_1)=0, \  R(u_2)=-\lambda u_2,R(u_3)=-\lambda u_3$,
\item $R(u_1)=-\lambda u_2, \  R(u_2)=-\lambda u_2,R(u_3)=0$,
\item $R(u_1)=-\lambda u_2, \  R(u_2)=-\lambda u_2,R(u_3)=-\lambda u_3$,
\item $R(u_1)=\lambda u_2, \  R(u_2)=0,R(u_3)=0$,
\item $R(u_1)=\lambda u_2, \  R(u_2)=0,R(u_3)=-\lambda u_3$,
\item $R(u_1)=0, \  R(u_2)=\lambda u_1-\lambda u_2,R(u_3)=0$,
\item $R(u_1)=0, \  R(u_2)=\lambda u_1-\lambda u_2,R(u_3)=-\lambda u_3$,
\item $R(u_1)=-2\lambda u_1+\lambda u_2, \  R(u_2)=-\lambda u_1,R(u_3)=0$,
\item $R(u_1)=-2\lambda u_1+\lambda u_2, \  R(u_2)=-\lambda u_1,R(u_3)=-\lambda u_3$,
\item $R(u_1)=-\lambda u_1, \  R(u_2)=0,R(u_3)=0$,
\item $R(u_1)=-\lambda u_1, \  R(u_2)=0,R(u_3)=-\lambda u_3$,
\item $R(u_1)=-\lambda u_1, \  R(u_2)=-\lambda u_2,R(u_3)=0$,
\item $R(u_1)=-\lambda u_1, \  R(u_2)=-\lambda u_2,R(u_3)=-\lambda u_3$,
\item $R(u_1)=-\lambda u_1-\lambda u_2, \  R(u_2)=-\lambda u_2,R(u_3)=0$,
\item $R(u_1)=-\lambda u_1-\lambda u_2, \  R(u_2)=-\lambda u_2,R(u_3)=-\lambda u_3$,
\item $R(u_1)=-\lambda u_1+\lambda u_2, \  R(u_2)=0,R(u_3)=0$,
\item $R(u_1)=-\lambda u_1+\lambda u_2, \  R(u_2)=0,R(u_3)=-\lambda u_3$,
\item $R(u_1)=-\lambda u_1, \  R(u_2)=-\lambda u_1,R(u_3)=0$,
\item $R(u_1)=-\lambda u_1, \  R(u_2)=-\lambda u_1,R(u_3)=-\lambda u_3$,
\item $R(u_1)=\lambda u_1-\lambda u_2, \  R(u_2)=\lambda u_1-\lambda u_2,R(u_3)=0$,
\item $R(u_1)=\lambda u_1-\lambda u_2, \  R(u_2)=\lambda u_1-\lambda u_2,R(u_3)=-\lambda u_3$,

\end{enumerate}
and the following list of $Q$'s
\begin{enumerate}
\item $Q(u_1)=-\gamma u_1, \  Q(u_2)=-\gamma u_2, \  Q(u_3)=-\gamma u_3$.
\item $Q(u_1)=-\gamma u_1, \  Q(u_2)=-\gamma u_2, \  Q(u_3)=0$.
\item $Q(u_1)=-\gamma u_1, \  Q(u_2)=0, \  Q(u_3)=-\gamma u_3$.
\item $Q(u_1)=0, \  Q(u_2)=-\gamma u_2, \  Q(u_3)=-\gamma u_3$.
\item $Q(u_1)=-\gamma u_1, \  Q(u_2)=0, \  Q(u_3)=0$.
\item $Q(u_1)=0, \  Q(u_2)=-\gamma u_2, \  Q(u_3)=0$.
\item $Q(u_1)=0, \  Q(u_2)=0, \  Q(u_3)=-\gamma u_3$.
\item $Q(u_1)=0, \  Q(u_2)=p u_2-\sqrt{-p(\gamma +p)}u_3, \  Q(u_3)=-\sqrt{-p(\gamma +p)}u_2-(\gamma+p) u_3$.
\item $Q(u_1)=0, \  Q(u_2)=p u_2+\sqrt{-p(\gamma +p)}u_3, \  Q(u_3)=\sqrt{-p(\gamma +p)}u_2-(\gamma+p) u_3$.
\item $Q(u_1)=-\gamma u_1, \  Q(u_2)=p u_2-\sqrt{-p(\gamma +p)}u_3, \  Q(u_3)=-\sqrt{-p(\gamma +p)}u_2-(\gamma+p) u_3$.
\item $Q(u_1)=-\gamma u_1, \  Q(u_2)=p u_2+\sqrt{-p(\gamma +p)}u_3, \  Q(u_3)=\sqrt{-p(\gamma +p)}u_2-(\gamma+p) u_3$,
\end{enumerate}
where $p$ is a parameter.\\

\item Examples of  bisystems are given by  pairs $(R,S)$ and  pairs $(Q,T)$ taken from the following list for $(R,S)$

\begin{enumerate}
\item $R(u_1)=p_1u_3, \  R(u_2)=p_1u_3,  \  R(u_3)=-p_2u_3,\\
S(u_1)=(p_2-p_3)u_1+p_3u_2, \ S(u_2)=p_2 u_2, \ S(u_3)=0. $
\item $R(u_1)=p_1u_3, \  R(u_2)=0,  \  R(u_3)=0,\\
S(u_1)=p_2u_2+(p_1+p_3)u_3, \ S(u_2)=p_2 u_2+p_3u_3, \ S(u_3)=p_2u_3. $
\item $R(u_1)=p_1u_2+p_2u_3, \  R(u_2)=p_2u_3,  \  R(u_3)=-p_3u_3,\\
S(u_1)=p_1u_1+p_3u_2, \ S(u_2)=p_3 u_2, \ S(u_3)=0. $
\item $R(u_1)=-p_1u_1+p_1u_2, \  R(u_2)=-p_2u_1+p_2u_2,  \  R(u_3)=0,\\
S(u_1)=-p_2u_1+p_3u_3, \ S(u_2)=-p_2 u_1+p_3u_3, \ S(u_3)=-p_2u_3. $
\item $R(u_1)=-p_1u_1+p_1u_2, \  R(u_2)=-p_1u_1+p_1u_2,  \  R(u_3)=p_2u_1-p_2u_2,\\
S(u_1)=-(p_1+p_3)u_1+p_3u_2, \ S(u_2)=-p_1 u_1, \ S(u_3)=p_2u_1. $
\item $R(u_1)=-(p_1+p_2)u_1+p_1u_2, \  R(u_2)=-p_2u_1,  \  R(u_3)=p_3u_1,\\
S(u_1)=-p_2u_1+p_2u_2, \ S(u_2)=-p_2 u_1+p_2 u_2, \ S(u_3)=p_3u_1-p_3u_2. $
\item $R(u_1)=p_1u_1-p_1u_2+p_2u_3, \  R(u_2)=p_1u_1-p_1u_2+p_2u_3,  \  R(u_3)=-p_1u_3,\\
S(u_1)=p_1u_2, \ S(u_2)=p_1 u_2, \ S(u_3)=0. $

\end{enumerate}
where   $p_1,p_2,p_3$ are parameters, an the following list for $(Q,T)$

\begin{enumerate}
\item $Q(u_1)=-q_1 u_1, \  Q(u_2)=0 , \  Q(u_3)=-q_1 u_3,\\
T(u_1)=0, \ T(u_2)=q_1u_2, \ T(u_3)=0.$
\item $Q(u_1)=-q_1 u_1+q_1u_2, \  Q(u_2)=-q_1 u_1+q_1u_2 , \  Q(u_3)=0,\\
T(u_1)=q_1u_2, \ T(u_2)=q_1u_1, \ T(u_3)=q_1u_3.$
\item $Q(u_1)=-q_1 u_1+q_1u_2, \  Q(u_2)=0 , \  Q(u_3)=-q_1 u_3,\\
T(u_1)=q_1 u_1+q_1u_2, \ T(u_2)=q_1 u_1+q_1u_2, \ T(u_3)=0.$
\item $Q(u_1)=-\frac{q_1^2+q_2^2}{q_1}u_1 , \  Q(u_2)=-\frac{q_1^2+q_2^2}{q_1}u_1 , \  Q(u_3)=\frac{q_1^2+q_2^2}{q_2}u_1,\\
T(u_1)=0, \ T(u_2)=q_1 u_2+q_2u_3, \ T(u_3)=q_2 u_2+\frac{q_2^2}{q_1}u_3.$
\item $Q(u_1)=-\frac{q_1^2+q_2^2}{q_1}u_1 +q_1u_2+q_2u_3, \  Q(u_2)=-\frac{2q_1^2+q_2^2}{q_1}u_1 , \  Q(u_3)=\frac{2q_1^2+q_2^2}{q_2}u_1,\\
T(u_1)=q_1u_1+q_1u_2+q_2u_3, \ T(u_2)=q_1u_1+q_1u_2+q_2u_3, \ T(u_3)=q_2u_1+q_2 u_2+\frac{q_2^2}{q_1}u_3.$
\item $Q(u_1)=-q_1 u_1+q_1u_2, \  Q(u_2)=-q_1 u_1+q_1u_2 , \  Q(u_3)=q_2 u_1-q_2u_2,\\
T(u_1)=q_1u_2, \ T(u_2)=q_1u_1, \ T(u_3)=0.$

\item $Q(u_1)=0, \  Q(u_2)=q_1 u_1-\frac{q_1q_3+q_2^2}{q_3}u_2+q_2u_3 , \  Q(u_3)=\frac{q_1q_3}{q_2}u_1+\frac{q_1q_3+q_2^2}{q_2}u_2-q_3u_3,\\
T(u_1)=\frac{q_3^2+q_2^2}{q_3}u_1, \ T(u_2)=q_3u_2+q_2u_3, \ T(u_3)=q_2u_2+\frac{q_2^2}{q_3}u_3.$

\end{enumerate}
where   $q_1,q_2,q_3$ are parameters.

\end{itemize}
\end{ex}

\begin{ex}
Now, we consider the unital Taft-Sweedler algebra generated by $g,x$ and the relations $(g^2=1,\ x^2=0, \ x g=-g x).$ The comultiplication is defined by $\Delta (g)=g\otimes g$ and   $\Delta (x)=x\otimes 1+g\otimes x$, the counit is given by $\varepsilon(g)=1,\ \varepsilon (x)=0.$

The bialgebra $T_2$ is 4-dimensional, it is defined with respect to a basis
$
\{u_1=1, \ u_2=g,\ u_3=x,\ u_4=gx\}
$
by the following table
which  describes multiplying the $i$th row elements  by the $j$th column elements.

\[
\begin{array}{|c|c|c|c|c|}
  \hline
   \ & u_1& u_2 & u_3 & u_4  \\ \hline
   u_1& u_1& u_2 & u_3 & u_4 \\ \hline
   u_2 &u_2 & u_1 & u_4 & u_3 \\ \hline
   u_3 &u_3 & -u_4 & 0 & 0  \\ \hline
   u_4 &u_4 & -u_3 & 0 & 0  \\ \hline
\end{array}
\]
and
\begin{eqnarray*}
&& \Delta(u_1)=u_1 \otimes u_1,\  \Delta(u_2)=u_2 \otimes u_2,\ \Delta(u_3)=u_3 \otimes u_1+u_2 \otimes u_3,\ \Delta(u_4)=u_4 \otimes u_2+u_1 \otimes u_4.
\end{eqnarray*}
\begin{equation*}
\varepsilon (u_1)=\varepsilon (u_2)=1,\quad \varepsilon (u_3)=\varepsilon (u_4)=0.
\end{equation*}
\begin{itemize}
\item The $(R,R)$-Rota-Baxter structures of weight $\lambda$ are given by

\begin{enumerate}
\item $R(u_1)=0, \  R(u_2)=0, R(u_3)=-\lambda u_3, \  R(u_4)=-\lambda u_4, \  $,
\item $R(u_1)=-\lambda u_1, \  R(u_2)=-\lambda u_2, R(u_3)=0, \  R(u_4)=0, \  $,
\item $R(u_1)=-\lambda u_1, \  R(u_2)=-\lambda u_2, R(u_3)=-\lambda u_3, \  R(u_4)=-\lambda u_4, \  $,
\end{enumerate}

\item The $(R,Q)$-Rota-Baxter structures of weight $\lambda$ and $\gamma$ respectively are given by  pairs $(R,Q)$  taken from the following list of $R$'s

\begin{enumerate}
\item $R(u_1)=0, \  R(u_2)=-p_1u_1+p_1u_2-\frac{(\lambda+p_1)(\lambda+p_1+p_2)}{p_3}u_3+\frac{(\lambda+p_1)(\lambda+p_2)}{p_3}u_4,
\\ R(u_3)=-p_3u_1+p_3u_2-(2\lambda+p_1+p_2)u_3+(\lambda+p_2)u_4,
\\Ê R(u_4)=-p_3u_1+p_3u_2-(\lambda+p_1+p_2)u_3+p_2u_4,$
\item $R(u_1)=-\lambda u_1, \  R(u_2)=(\lambda+p_1)u_1+p_1u_2-\frac{(\lambda+p_1)(\lambda+p_1+p_2)}{p_3}u_3+\frac{(\lambda+p_1)(\lambda+p_2)}{p_3}u_4,
\\ R(u_3)=p_3u_1+p_3u_2-(2\lambda+p_1+p_2)u_3+(\lambda+p_2)u_4,
\\Ê R(u_4)=p_3u_1+p_3u_2-(\lambda+p_1+p_2)u_3+p_2u_4,$
\item $R(u_1)=-\lambda u_1, \  R(u_2)=\lambda u_1+p_1u_3+\frac{p_1p_2}{\lambda+p_2}u_4,
\\ R(u_3)=-(\lambda+p_2)u_3-p_2u_4,
\\Ê R(u_4)=(\lambda+p_2)u_3+p_2u_4,$
\item $R(u_1)=-\lambda u_1, \  R(u_2)=\lambda u_1+\frac{\lambda(\lambda+p_1)}{p_2}u_3+\frac{\lambda(\lambda+p_1)}{p_2}u_4,
\\ R(u_3)=-p_2u_1-p_2u_2-(2\lambda+p_1)u_3-(\lambda+p_1)u_4,
\\Ê R(u_4)=p_2u_1+p_2u_2+(\lambda+p_1)u_3+p_1u_4,$

\item $R(u_1)=\frac{1}{2}\lambda u_1-\frac{1}{2}\lambda u_2 +p_1u_3+p_2u_4, \
R(u_2)=\frac{1}{2}\lambda u_1-\frac{1}{2}\lambda u_2-p_2u_3+p_1u_4,\
\\ R(u_3)=-\frac{1}{2}\lambda u_3-\frac{1}{2}\lambda u_4,
\\Ê R(u_4)=-\frac{1}{2}\lambda u_3-\frac{1}{2}\lambda u_4,$

\end{enumerate}
where $p_1,p_2,p_3$  are parameters,
and the following list of $Q$'s
\begin{enumerate}
\item $Q(u_1)=-\gamma u_1, \  Q(u_2)=-\gamma u_2, \  Q(u_3)=-\gamma u_3, \  Q(u_4)=-\gamma u_4,$
\item $Q(u_1)=-\gamma u_1, \  Q(u_2)=-\gamma u_2, \  Q(u_3)=-\gamma u_3, \  Q(u_4)=0,$
\item $Q(u_1)=-\gamma u_1, \  Q(u_2)=-\gamma u_2, \  Q(u_3)=0, \  Q(u_4)=0,$
\item $Q(u_1)=-\gamma u_1, \  Q(u_2)=0, \  Q(u_3)=0, \  Q(u_4)=0,$
\item $Q(u_1)=-\gamma u_1, \  Q(u_2)=-\gamma u_2, \  Q(u_3)=0, \  Q(u_4)=-\gamma u_4,$
\item $Q(u_1)=0, \  Q(u_2)=0, \  Q(u_3)=-\gamma u_3, \  Q(u_4)=-\gamma u_4$.
\item $Q(u_1)=0, \  Q(u_2)=-\gamma u_2, \  Q(u_3)=0, \  Q(u_4)=0,$
\end{enumerate}

\item Examples of bisystems are given by a pair $(R,S)$ and a pair $(Q,T)$ taken from the following list for $(R,S)$

\begin{enumerate}
\item $R(u_1)=-p_1u_1, \  R(u_2)=0, \ R(u_3)=0, \ R(u_4)=0, $
\\  $S(u_1)=p_2 u_4, \  S(u_2)=p_3u_4, \ S(u_3)=0, \ S(u_4)=p_1u_4, $
\item $R(u_1)=-(p_1+p_2)u_1, \  R(u_2)=-(p_1+p_2)u_2, \ R(u_3)=0, \ R(u_4)=0, $
\\  $S(u_1)=p_3 u_3-\frac{p_3p_2}{p_1}u_4, \  S(u_2)=-p_3 u_3+\frac{p_3p_2}{p_1}u_4,
\ S(u_3)=p_1u_3-p_2u_4, \ S(u_4)=-p_1u_3+p_2u_4, $
\item $R(u_1)=-(p_1+p_2)u_1, \  R(u_2)=-(p_1+p_2)u_2, \ R(u_3)=0, \ R(u_4)=0, $
\\  $S(u_1)=p_3 u_3+\frac{p_3p_2}{p_1}u_4, \  S(u_2)=p_3 u_3+\frac{p_3p_2}{p_1}u_4,
\ S(u_3)=p_1u_3+p_2u_4, \ S(u_4)=p_1u_3+p_2u_4, $
\item $R(u_1)=-p_1u_1, \  R(u_2)=-p_1u_2, \ R(u_3)=0, \ R(u_4)=0, $
\\  $S(u_1)=p_3 u_3+p_4u_4, \  S(u_2)=p_4 u_3+p_3u_4,
\ S(u_3)=p_1u_3, \ S(u_4)=p_1u_4, $
\item $R(u_1)=-p_1u_1, \  R(u_2)=-p_1u_2, \ R(u_3)=0, \ R(u_4)=0, $
\\  $S(u_1)=p_2 u_4, \  S(u_2)=-p_2u_4,
 \ S(u_3)=-p_1u_4, \ S(u_4)=p_1u_4, $
\item $R(u_1)=-p_1u_1, \  R(u_2)=-p_1u_2, \ R(u_3)=0, \ R(u_4)=0, $
\\  $S(u_1)=p_2 u_4, \  S(u_2)=p_2u_4,
\ S(u_3)=p_1u_4, \ S(u_4)=p_1u_4, $
\item $R(u_1)=-(p_1+p_2)u_1, \  R(u_2)=-(p_1+p_2)u_2, \ R(u_3)=0, \ R(u_4)=0, $
\\  $S(u_1)=0, \  S(u_2)=0,
\ S(u_3)=p_1u_3-p_2u_4, \ S(u_4)=-p_1u_3+p_2u_4, $
\item $R(u_1)=-2p_1u_1, \  R(u_2)=-2p_1u_2, \ R(u_3)=0, \ R(u_4)=0, $
\\  $S(u_1)=p_2u_3-p_2u_4, \  S(u_2)=-p_2u_3+p_2u_4,
\ S(u_3)=p_1u_3-p_1u_4, \ S(u_4)=-p_1u_3+p_2u_4, $
\item $R(u_1)=0, \  R(u_2)=0, \ R(u_3)=-p_1u_3, \ R(u_4)=-p_1u_4, $
\\  $S(u_1)=p_1u_1, \  S(u_2)=p_1u_2,
\ S(u_3)=0, \ S(u_4)=0, $
\item $R(u_1)=0, \  R(u_2)=0, \ R(u_3)=0, \ R(u_4)=-p_1u_4, $
\\  $S(u_1)=p_1u_1, \  S(u_2)=0,
\ S(u_3)=0, \ S(u_4)=0, $
\end{enumerate}
where   $p_1,p_2,p_3$ are parameters, and the following list for $(Q,T)$

\begin{enumerate}
\item $Q(u_1)=0, \  Q(u_2)=0, \ Q(u_3)=q_1u_2-\frac{q_1q_3}{q_2}, \ Q(u_4)=q_2 u_2-q_3 u_4, $
\\  $T(u_1)=q_4 u_1, \  T(u_2)=q_3 u_2, \ T(u_3)=0, \ T(u_4)=0, $
\item $Q(u_1)=0, \  Q(u_2)=q_1 u_2+q_2 u_3, \ Q(u_3)=q_3u_3, \ Q(u_4)=\frac{q_1q_4}{q_2} u_2+q_4 u_4, $
\\  $T(u_1)=-q_3 u_1, \  T(u_2)=0, \ T(u_3)=0, \ T(u_4)=0, $
\item $Q(u_1)=q_1 u_2+q_2 u_4, \  Q(u_2)=0, \ Q(u_3)=q_3u_2+\frac{q_3q_2}{q_1} u_4, \ Q(u_4)=q_4u_2+\frac{q_2q_4}{q_1} u_4, $
\\  $T(u_1)=0, \  T(u_2)=\frac{q_1^2-q_4q_2}{q_1} u_2, \ T(u_3)=0, \ T(u_4)=0, $
\item $Q(u_1)=0, \  Q(u_2)=q_1 u_2+q_2 u_3, \ Q(u_3)=-q_3u_3, \ Q(u_4)=0, $
\\  $T(u_1)=q_3u_1, \  T(u_2)=0, \ T(u_3)=0, \ T(u_4)=q_3u_4, $
\item $Q(u_1)=q_1 u_2+q_2 u_4, \  Q(u_2)=0, \ Q(u_3)=0, \ Q(u_4)=\frac{q_1(q_1-q_3)}{q_2}u_2+ (q_1-q_3)u_4, $
\\  $T(u_1)=0, \  T(u_2)=q_3 u_2, \ T(u_3)=q_3u_3, \ T(u_4)=0, $
\item $Q(u_1)=q_1 u_2+q_2 u_4, \  Q(u_2)=-q_3 u_2, \ Q(u_3)=0, \ Q(u_4)=\frac{q_1(q_1+q_3)}{q_2}u_2+ q_1u_4, $
\\  $T(u_1)=0, \  T(u_2)=0, \ T(u_3)=q_3u_3, \ T(u_4)=0, $

\item $Q(u_1)=q_1 u_1+q_1 u_2, \  Q(u_2)=0, \ Q(u_3)=q_2 u_2+q_1 u_3, \ Q(u_4)=- q_1u_4, $
\\  $T(u_1)=0, \  T(u_2)=q_1 u_1+q_1 u_2, \ T(u_3)=0, \ T(u_4)=0, $

\item $Q(u_1)=q_1 u_1+q_1 u_2, \  Q(u_2)=0, \ Q(u_3)=q_1 u_3-q_1 u_4, \ Q(u_4)=0, $
\\  $T(u_1)=0, \  T(u_2)=q_1 u_1+q_1 u_2, \ T(u_3)=0, \ T(u_4)=q_1 u_3-q_1 u_4, $

\item $Q(u_1)=q_1 u_1+q_1 u_2, \  Q(u_2)=0, \ Q(u_3)=q_1 u_3+q_1 u_4, \ Q(u_4)=0, $
\\  $T(u_1)=0, \  T(u_2)=q_1 u_1+q_1 u_2, \ T(u_3)=0, \ T(u_4)=-q_1 u_3-q_1 u_4, $

\item $Q(u_1)=q_1 u_1, \  Q(u_2)=-q_2u_2, \ Q(u_3)=0, \ Q(u_4)=-q_2 u_4, $
\\  $T(u_1)=0, \  T(u_2)=0, \ T(u_3)=q_2u_3, \ T(u_4)=0, $

\end{enumerate}
where   $q_1,q_2,q_3$ are parameters.
\end{itemize}
\end{ex}

 {\bf Acknowledgments}  This work was partially supported by China Postdoctoral Science Foundation (No.2017M611291), Foundation for Young Key Teacher by Henan Province (No. 2015GGJS-088) and Natural Science Foundation of Henan Province (No. 17A110007). T. Ma is grateful to the Erasmus Mundus project FUSION for supporting the postdoctoral fellowship visiting to M\"alardalen University, V\"asteras, Sweden and to Division of Applied Mathematics at the School of Education, Culture and Communication for cordial hospitality.

 \end{document}